\documentclass[12pt, letterpaper]{amsart}
\usepackage[utf8]{inputenc}
\usepackage{amsmath}
\usepackage{amsthm}
\usepackage{geometry}
\usepackage{amsfonts}
\usepackage[colorlinks=true, urlcolor=red, citecolor=cyan, linkcolor=blue]{hyperref}
\usepackage{changepage}
\usepackage{amssymb}
\usepackage{multicol}
\usepackage{tikz}
\usepackage{tikz-cd}
\usetikzlibrary{babel}
\usepackage{float}
\usepackage{multirow}
\usepackage{graphicx}
\usepackage{dsfont}
\usepackage{mathrsfs}
\usepackage[english]{babel}
\usepackage{pgfplots}
\pgfplotsset{compat=1.15}
\usepackage{mathrsfs}
\usetikzlibrary{arrows}

\newtheorem{teo}{Theorem}[section]
\newtheorem{cor}[teo]{Corollary}

\newtheorem{lem}[teo]{Lemma}
\theoremstyle{definition}
\newtheorem{defi}[teo]{Definition}

\newtheorem{exam}{Example}[section]

\renewcommand{\cos}{\text{cos}}
\renewcommand{\sin}{\text{sin}}

\newcommand{\R}{\mathbb{R}}

\newcommand{\Z}{\mathbb{Z}}
\newcommand{\N}{\mathbb{N}}

\theoremstyle{definition}
\newcommand{\leb}{\text{Leb}}

\usepackage[shortlabels]{enumitem}

\usepackage[utf8]{inputenc}
\numberwithin{equation}{section}

\title{A nonlinear version of the $\alpha$-Kakutani equidistribution problem}
\author{I. Rojas Aravena}
\address{Department of Mathematics, University of British Columbia}
\email{i.andres@math.ubc.ca}
\begin{document}

\begin{abstract}
In this work, we extend results of Kakutani; Adler and Flatto; Smilansky; Pollicott and Sewell on the equidistribution of endpoints generated by interval-splitting procedures. We study a nonlinear version of the problem generated by a finite or countable family of $\mathcal{C}^{1+ \varepsilon}$ contractions and prove Lebesgue equidistribution under suitable nonlattice and thermodynamic regularity assumptions.
\end{abstract}
%\tableofcontents

\maketitle
\section{Introduction}
In this work we study a nonlinear version of the Kakutani splitting procedure, originally introduced by Kakutani in \cite{KakutaniOriginal}; see also \cite{AdleryFlatto} for another proof and historical background.

Let us first recall the classical construction. A partition of $[0,1]$ will mean a finite or countable collection of closed subintervals with positive length, pairwise disjoint interiors, and whose union covers $[0,1]$ up to a set of Lebesgue measure zero.

\begin{defi}[Kakutani splitting procedure]
Let $\alpha\in (0,1)$ be a fixed real. Let $\mathcal{P}_0:= \{[0,1]\}$. Let $\mathcal{P}_{n+ 1}$ be the partition generated by taking all intervals of maximal length of $\mathcal{P}_n$ and subdividing them in two intervals of lengths proportional to $\alpha$ and $1-\alpha$ respectively.
\end{defi}

Consider $\widehat{E_n} = \bigcup_{I\in \mathcal{P}_n} \partial I$ the set of endpoints at the $n$-th stage of this process and let 
$$\widehat{\mu}_n = \frac{1}{\# \widehat{E}_n}\sum_{x \in \widehat{E}_n} \delta_x.$$

\begin{exam}\label{0IntroExample2}
The following diagram shows the first seven partitions for the choice $\alpha = 2/5$. 
\begin{center}
\definecolor{xdxdff}{rgb}{0.49019607843137253,0.49019607843137253,1.}
\begin{tikzpicture}[line cap=round,line join=round,>=triangle 45,x=10.0cm,y=1.0cm][$\mathcal{P}_0:\quad $]
\clip(-0.06590059606070635,-0.5153678449999174) rectangle (1.1871596350345994,0.32666340089550305);
\draw [line width=2.pt] (0.,0.)-- (1.,0.);
\begin{scriptsize}
\draw [fill=xdxdff] (0.,0.) circle (2.5pt);
\draw [fill=xdxdff] (1.,0.) circle (2.5pt);
\end{scriptsize}
\end{tikzpicture}
\end{center}

\begin{center}
\definecolor{ududff}{rgb}{0.30196078431372547,0.30196078431372547,1.}
\definecolor{xdxdff}{rgb}{0.49019607843137253,0.49019607843137253,1.}
\begin{tikzpicture}[line cap=round,line join=round,>=triangle 45,x=10.0cm,y=1.0cm][$\mathcal{P}_1: \quad$]
\clip(-0.06590059606070635,-0.5153678449999174) rectangle (1.1871596350345994,0.32666340089550305);
\draw [line width=2.pt] (0.,0.)-- (1.,0.);
\begin{scriptsize}
\draw [fill=xdxdff] (0.,0.) circle (2.5pt);
\draw [fill=xdxdff] (1.,0.) circle (2.5pt);
\draw [fill=ududff] (0.4,0.) circle (2.5pt);
\end{scriptsize}
\end{tikzpicture}
\end{center}
$$\widehat{E_1} = \left\{0,\frac{2}{5},1\right\}.$$

\begin{center}
\definecolor{ududff}{rgb}{0.30196078431372547,0.30196078431372547,1.}
\definecolor{xdxdff}{rgb}{0.49019607843137253,0.49019607843137253,1.}
\begin{tikzpicture}[line cap=round,line join=round,>=triangle 45,x=10.0cm,y=1.0cm][$\mathcal{P}_2: \quad$]
\clip(-0.06590059606070635,-0.5153678449999174) rectangle (1.1871596350345994,0.32666340089550305);
\draw [line width=2.pt] (0.,0.)-- (1.,0.);
\begin{scriptsize}
\draw [fill=xdxdff] (0.,0.) circle (2.5pt);
\draw [fill=xdxdff] (1.,0.) circle (2.5pt);
\draw [fill=ududff] (0.4,0.) circle (2.5pt);
\draw [fill=ududff] (0.64,0.) circle (2.5pt);
\end{scriptsize}
\end{tikzpicture}
\end{center}
$$\widehat{E_2} :=\left\{0, \frac{2}{5}, \frac{16}{25},1 \right\}.$$

\begin{center}
\definecolor{ududff}{rgb}{0.30196078431372547,0.30196078431372547,1.}
\definecolor{xdxdff}{rgb}{0.49019607843137253,0.49019607843137253,1.}
\begin{tikzpicture}[line cap=round,line join=round,>=triangle 45,x=10.0cm,y=1.0cm][$\mathcal{P}_3: \quad$]
\clip(-0.06590059606070635,-0.5153678449999174) rectangle (1.1871596350345994,0.32666340089550305);
\draw [line width=2.pt] (0.,0.)-- (1.,0.);
\begin{scriptsize}
\draw [fill=xdxdff] (0.,0.) circle (2.5pt);
\draw [fill=xdxdff] (1.,0.) circle (2.5pt);
\draw [fill=ududff] (0.4,0.) circle (2.5pt);
\draw [fill=ududff] (0.64,0.) circle (2.5pt);
\draw [fill=ududff] (0.16,0.) circle (2.5pt);
\end{scriptsize}
\end{tikzpicture}
\end{center}
$$\widehat{E_3}:= \left\{0, \frac{4}{25}, \frac{2}{5}, \frac{16}{25}, 1\right\}.$$

\begin{center}
\definecolor{ududff}{rgb}{0.30196078431372547,0.30196078431372547,1.}
\definecolor{xdxdff}{rgb}{0.49019607843137253,0.49019607843137253,1.}
\begin{tikzpicture}[line cap=round,line join=round,>=triangle 45,x=10.0cm,y=1.0cm][$\mathcal{P}_4:\quad$]
\clip(-0.06590059606070635,-0.5153678449999174) rectangle (1.1871596350345994,0.32666340089550305);
\draw [line width=2.pt] (0.,0.)-- (1.,0.);
\begin{scriptsize}
\draw [fill=xdxdff] (0.,0.) circle (2.5pt);
\draw [fill=xdxdff] (1.,0.) circle (2.5pt);
\draw [fill=ududff] (0.4,0.) circle (2.5pt);
\draw [fill=ududff] (0.64,0.) circle (2.5pt);
\draw [fill=ududff] (0.16,0.) circle (2.5pt);
\draw [fill=ududff] (0.784,0.) circle (2.5pt);
\end{scriptsize}
\end{tikzpicture}
\end{center}
$$\widehat{E_4}:= \left\{0, \frac{4}{25}, \frac{2}{5}, \frac{16}{25}, \frac{98}{125},1\right\}.$$

\begin{center}
\definecolor{ududff}{rgb}{0.30196078431372547,0.30196078431372547,1.}
\definecolor{xdxdff}{rgb}{0.49019607843137253,0.49019607843137253,1.}
\begin{tikzpicture}[line cap=round,line join=round,>=triangle 45,x=10.0cm,y=1.0cm][$\mathcal{P}_5: \quad$]
\clip(-0.06590059606070635,-0.5153678449999176) rectangle (1.1871596350345994,0.3266634008955031);
\draw [line width=2.pt] (0.,0.)-- (1.,0.);
\begin{scriptsize}
\draw [fill=xdxdff] (0.,0.) circle (2.5pt);
\draw [fill=xdxdff] (1.,0.) circle (2.5pt);
\draw [fill=ududff] (0.4,0.) circle (2.5pt);
\draw [fill=ududff] (0.64,0.) circle (2.5pt);
\draw [fill=ududff] (0.16,0.) circle (2.5pt);
\draw [fill=ududff] (0.784,0.) circle (2.5pt);
\draw [fill=ududff] (0.256,0.) circle (2.5pt);
\draw [fill=ududff] (0.8608,0.) circle (2.5pt);
\draw [fill=ududff] (0.496,0.) circle (2.5pt);
\end{scriptsize}
\end{tikzpicture}
\end{center}
$$\widehat{E_5} := \left\{0, \frac{4}{25}, \frac{32}{125} ,\frac{2}{5}, \frac{62}{125}, \frac{16}{25}, \frac{98}{125}, 1\right\}.$$

\begin{center}
\definecolor{ududff}{rgb}{0.30196078431372547,0.30196078431372547,1.}
\definecolor{xdxdff}{rgb}{0.49019607843137253,0.49019607843137253,1.}
\begin{tikzpicture}[line cap=round,line join=round,>=triangle 45,x=10.0cm,y=1.0cm][$\mathcal{P}_6: \quad$]
\clip(-0.0604438322896736,-0.4862832941003133) rectangle (1.1275717746549223,0.31203927398913556);
\draw [line width=2.pt] (0.,0.)-- (1.,0.);
\begin{scriptsize}
\draw [fill=xdxdff] (0.,0.) circle (2.5pt);
\draw [fill=xdxdff] (1.,0.) circle (2.5pt);
\draw [fill=ududff] (0.4,0.) circle (2.5pt);
\draw [fill=ududff] (0.64,0.) circle (2.5pt);
\draw [fill=ududff] (0.16,0.) circle (2.5pt);
\draw [fill=ududff] (0.784,0.) circle (2.5pt);
\draw [fill=ududff] (0.256,0.) circle (2.5pt);
\draw [fill=ududff] (0.8608,0.) circle (2.5pt);
\draw [fill=ududff] (0.496,0.) circle (2.5pt);
\end{scriptsize}
\end{tikzpicture}
\end{center}
$$\widehat{E_6} := \left\{0, \frac{4}{25}, \frac{32}{125} ,\frac{2}{5}, \frac{62}{125}, \frac{16}{25}, \frac{98}{125}, \frac{538}{625}, 1\right\}.$$

\begin{center}
\definecolor{ududff}{rgb}{0.30196078431372547,0.30196078431372547,1.}
\definecolor{xdxdff}{rgb}{0.49019607843137253,0.49019607843137253,1.}
\begin{tikzpicture}[line cap=round,line join=round,>=triangle 45,x=10.0cm,y=1.0cm][$\mathcal{P}_7: \quad$]
\clip(-0.06590059606070635,-0.5153678449999176) rectangle (1.1871596350345994,0.3266634008955031);
\draw [line width=2.pt] (0.,0.)-- (1.,0.);
\begin{scriptsize}
\draw [fill=xdxdff] (0.,0.) circle (2.5pt);
\draw [fill=xdxdff] (1.,0.) circle (2.5pt);
\draw [fill=ududff] (0.4,0.) circle (2.5pt);
\draw [fill=ududff] (0.64,0.) circle (2.5pt);
\draw [fill=ududff] (0.16,0.) circle (2.5pt);
\draw [fill=ududff] (0.784,0.) circle (2.5pt);
\draw [fill=ududff] (0.256,0.) circle (2.5pt);
\draw [fill=ududff] (0.8608,0.) circle (2.5pt);
\draw [fill=ududff] (0.496,0.) circle (2.5pt);
\draw [fill=ududff] (0.064,0.) circle (2.5pt);
\end{scriptsize}
\end{tikzpicture}
\end{center}
$$\widehat{E_7}:= \left\{0, \frac{8}{125}, \frac{4}{25}, \frac{32}{125} ,\frac{2}{5}, \frac{62}{125}, \frac{16}{25}, \frac{98}{125}, \frac{538}{625}, 1\right\}$$

\end{exam}

\begin{teo}[Kakutani, \cite{KakutaniOriginal}]
For all $\alpha \in (0,1)$, the set $E_n$ becomes uniformly distributed as $n\to \infty$, i.e., for all intervals $I \subset [0,1]$
$$\lim_{n \to \infty} \mu_n(I) = \leb(I).$$
\end{teo}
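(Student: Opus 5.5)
The plan is to replace the discrete-time splitting by a continuous parameter $t$ and prove equidistribution for the partitions $\mathcal{Q}_t$, consisting of the intervals of a ``stopping-time'' cut. For $t>0$, let $\mathcal{Q}_t$ be the partition obtained by splitting every interval of length $>t$ (recursively, starting from $[0,1]$) and never splitting intervals of length $\le t$. By definition of the procedure, every $\mathcal{P}_n$ equals some $\mathcal{Q}_t$, except at stages where only part of the maximal-length intervals has been split. Since all intervals of the same length are congruent copies under scaling, in such a stage the partition lies between $\mathcal{Q}_{t}$ and $\mathcal{Q}_{t^-}$. These two partitions differ in a controlled way: the intervals of length exactly $t$ occupy a proportion of the count which, as we will see, tends to zero when $\log\alpha/\log(1-\alpha)$ is irrational. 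In the rational case a separate comparison argument is needed. So it suffices to show that for each interval $I$, $\#(\text{endpoints of }\mathcal{Q}_t \text{ in } I)/\#\mathcal{Q}_t \to \leb(I)$ as $t\to 0$.

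Set $N(t)=\#\mathcal{Q}_t$. Self-similarity gives the renewal equation
\[
N(t)=N(t/\alpha)+N(t/(1-\alpha)) \quad (t<1), \qquad N(t)=1 \quad (t\ge 1).
\]
In the variable $s=-\log t$ this becomes a renewal equation with step distribution putting mass $\alpha$ at $-\log\alpha$ and mass $1-\alpha$ at $-\log(1-\alpha)$. This distribution is a probability measure because $\alpha+(1-\alpha)=1$, i.e.\ the similarity exponent is $1$. The key renewal theorem then gives $N(t)\sim C/t$ in the nonlattice case. In the lattice case the conclusion is only $N(t)\,t$ bounded above and below, with a periodic factor.

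Now fix $I$ and let $N_I(t)$ count the intervals of $\mathcal{Q}_t$ contained in $I$. Choose $k$ and a level-$k$ refinement. Each basic interval $J$ of $\mathcal{P}_k$ with length $|J|$ contributes $N(t/|J|)\sim C|J|/t$ intervals, because the subdivision inside $J$ is a rescaled copy of the whole process. Approximating $I$ from inside and outside by unions of such $J$, whose mesh tends to $0$ as $k\to\infty$, we obtain
\[
\liminf_{t\to 0} \frac{N_I(t)}{N(t)} \ge \sum_{J\subset I}|J|, \qquad \limsup_{t\to 0} \frac{N_I(t)}{N(t)} \le \sum_{J\cap I\neq\emptyset}|J|.
\]
Both sums tend to $\leb(I)$ as $k\to\infty$. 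Endpoint counts and interval counts differ by $1$, so the result follows in the nonlattice case.

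The main obstacle is the lattice case, $\log\alpha/\log(1-\alpha)\in\mathbb{Q}$, together with the intermediate partial stages. There $N(t)t$ oscillates, so the ratio $N(t/|J|)/N(t)$ need not converge to $|J|$. I would first try to choose the refining intervals $J$ with lengths in the lattice $\{\alpha^a(1-\alpha)^b\}$. All lengths are then powers $\beta^m$ of a common $\beta$, and $N(t/\beta^m)/N(t)$ can be computed exactly via the periodic renewal limit as $\beta^m$ times the same periodic factor, so the factor cancels in the ratio. The partial stages (splitting only some of the intervals of maximal length) then need an extra argument. I would handle them by noting that the intervals of a given length are evenly interleaved: by induction on the tree, the order in which the procedure splits tied intervals refines each $J$ proportionally. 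Making this precise is the step I expect to be delicate.
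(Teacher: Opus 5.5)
Your proposal is correct, and its core is the route the paper uses for its own Theorem~\ref{1MainTheo}, following Pollicott--Sewell. That route has three steps: a renewal asymptotic for the global count, an exact self-similar count $N(t/|J|)$ inside each basic interval, and an inner/outer approximation of $I$ by basic intervals of vanishing mesh (the paper's $\mathcal{U}_k$ argument). The paper does not prove Kakutani's theorem itself; it only cites it, and it applies its renewal machinery only in the nonlattice case. So the ingredient in your write-up that goes beyond the paper is the lattice case.

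There your fix is right and in fact automatic. Let $h$ generate $\Z\log\alpha+\Z\log(1-\alpha)$, put $\beta=e^{-h}$, and write $\alpha=\beta^p$, $1-\alpha=\beta^q$ with $\gcd(p,q)=1$. Every basic interval then has length $\alpha^a(1-\alpha)^b=\beta^{ap+bq}$. Hence $N$ is constant on each cell $[\beta^{m+1},\beta^m)$, say equal to $n_m$, and $N(t/\beta^j)=n_{m-j}$ for $t$ in that cell. The numbers satisfy $n_m=n_{m-p}+n_{m-q}$. The lattice renewal theorem gives $n_m\beta^m\to c>0$; alternatively, use that $\beta$ is the simple root of $w^p+w^q=1$ of strictly smallest modulus, together with $N(t)\ge 1/t$. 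Therefore $N(t/|J|)/N(t)\to|J|$ as $t\to0$ through all real values: $-\log t$ and $-\log(t/|J|)$ lie in the same residue class mod $h$, so the periodic factor cancels. This cancellation is exactly what fails in the nonlinear lattice setting of Section~\ref{SectionLattice}, where the cylinder lengths $\alpha_v$ are no longer lattice points and the fractional-part terms do not factor out.

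Finally, drop the discussion of partial stages, because they never occur. The procedure splits \emph{all} intervals of maximal length at once. So the maximal length $t_n$ in $\mathcal{P}_n$ is strictly decreasing, and every interval split before stage $n$ had length $t_k>t_n$. Hence $\mathcal{P}_n=\mathcal{Q}_{t_n}$ exactly, with $t_n\to0$. Neither the interleaving argument you flagged as delicate nor the separate comparison you anticipated for the rational case is needed.
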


A natural generalization of this result was first introduced by Volčic in \cite{Volcic}. In his work, instead of splitting all maximal intervals in two subintervals, he split them according a fixed finite partition consisting of $N \geq 2$ subintervals. In other words, at each stage, one splits all intervals of maximal length into $N$ pieces whose lengths have a certain fixed proportion.

\begin{exam}\label{0introExampleFinitos}
The following diagrams show the first 5 partitions for the initial partition $\{[0, 1/2], [1/2, 4/5], [4/5, 1]\}.$

\begin{center}
\definecolor{xdxdff}{rgb}{0.49019607843137253,0.49019607843137253,1.}
\begin{tikzpicture}[line cap=round,line join=round,>=triangle 45,x=10.0cm,y=1.0cm][$\mathcal{P}_0:\quad $]
\clip(-0.06590059606070635,-0.5153678449999174) rectangle (1.1871596350345994,0.32666340089550305);
\draw [line width=2.pt] (0.,0.)-- (1.,0.);
\begin{scriptsize}
\draw [fill=xdxdff] (0.,0.) circle (2.5pt);
\draw [fill=xdxdff] (1.,0.) circle (2.5pt);
\end{scriptsize}
\end{tikzpicture}
\end{center}

\begin{center}
\definecolor{xdxdff}{rgb}{0.49019607843137253,0.49019607843137253,1.}
\begin{tikzpicture}[line cap=round,line join=round,>=triangle 45,x=10.0cm,y=1.0cm][$\mathcal{P}_1:\quad $]
\clip(-0.06590059606070635,-0.5153678449999174) rectangle (1.1871596350345994,0.32666340089550305);
\draw [line width=2.pt] (0.,0.)-- (1.,0.);
\begin{scriptsize}
\draw [fill=xdxdff] (0.,0.) circle (2.5pt);
\draw [fill=xdxdff] (1.,0.) circle (2.5pt);
\draw [fill=xdxdff] (0.5,0.) circle (2.5pt);
\draw [fill=xdxdff] (0.8,0.) circle (2.5pt);
\end{scriptsize}
\end{tikzpicture}
\end{center}
$$\widehat{E_1} = \left\{0, \frac{1}{2}, \frac{4}{5}, 1\right\}$$

\begin{center}
\definecolor{xdxdff}{rgb}{0.49019607843137253,0.49019607843137253,1.}
\begin{tikzpicture}[line cap=round,line join=round,>=triangle 45,x=10.0cm,y=1.0cm][$\mathcal{P}_2:\quad $]
\clip(-0.06590059606070635,-0.5153678449999174) rectangle (1.1871596350345994,0.32666340089550305);
\draw [line width=2.pt] (0.,0.)-- (1.,0.);
\begin{scriptsize}
\draw [fill=xdxdff] (0.,0.) circle (2.5pt);
\draw [fill=xdxdff] (1.,0.) circle (2.5pt);
\draw [fill=xdxdff] (0.5,0.) circle (2.5pt);
\draw [fill=xdxdff] (0.8,0.) circle (2.5pt);
\draw [fill=xdxdff] (0.25,0.) circle (2.5pt);
\draw [fill=xdxdff] (0.4,0.) circle (2.5pt);
\end{scriptsize}
\end{tikzpicture}
\end{center}
$$\widehat{E_2} = \left\{0, \frac{1}{4}, \frac{2}{5}, \frac{1}{2}, \frac{4}{5}, 1\right\}$$

\begin{center}
\definecolor{xdxdff}{rgb}{0.49019607843137253,0.49019607843137253,1.}
\begin{tikzpicture}[line cap=round,line join=round,>=triangle 45,x=10.0cm,y=1.0cm][$\mathcal{P}_3:\quad $]
\clip(-0.06590059606070635,-0.5153678449999174) rectangle (1.1871596350345994,0.32666340089550305);
\draw [line width=2.pt] (0.,0.)-- (1.,0.);
\begin{scriptsize}
\draw [fill=xdxdff] (0.,0.) circle (2.5pt);
\draw [fill=xdxdff] (1.,0.) circle (2.5pt);
\draw [fill=xdxdff] (0.5,0.) circle (2.5pt);
\draw [fill=xdxdff] (0.8,0.) circle (2.5pt);
\draw [fill=xdxdff] (0.25,0.) circle (2.5pt);
\draw [fill=xdxdff] (0.4,0.) circle (2.5pt);
\draw [fill=xdxdff] (0.65,0.) circle (2.5pt);
\draw [fill=xdxdff] (0.74,0.) circle (2.5pt);
\end{scriptsize}
\end{tikzpicture}
\end{center}
$$\widehat{E_3} = \left\{0, \frac{1}{4}, \frac{2}{5}, \frac{1}{2}, \frac{13}{20}, \frac{37}{50}, \frac{4}{5}, 1\right\}$$

\begin{center}
\definecolor{xdxdff}{rgb}{0.49019607843137253,0.49019607843137253,1.}
\begin{tikzpicture}[line cap=round,line join=round,>=triangle 45,x=10.0cm,y=1.0cm][$\mathcal{P}_4:\quad $]
\clip(-0.06590059606070635,-0.5153678449999174) rectangle (1.1871596350345994,0.32666340089550305);
\draw [line width=2.pt] (0.,0.)-- (1.,0.);
\begin{scriptsize}
\draw [fill=xdxdff] (0.,0.) circle (2.5pt);
\draw [fill=xdxdff] (1.,0.) circle (2.5pt);
\draw [fill=xdxdff] (0.5,0.) circle (2.5pt);
\draw [fill=xdxdff] (0.25,0.) circle (2.5pt);
\draw [fill=xdxdff] (0.4,0.) circle (2.5pt);
\draw [fill=xdxdff] (0.65,0.) circle (2.5pt);
\draw [fill=xdxdff] (0.74,0.) circle (2.5pt);
\draw [fill=xdxdff] (0.8,0.) circle (2.5pt);
\draw [fill=xdxdff] (0.125,0.) circle (2.5pt);
\draw [fill=xdxdff] (0.2,0.) circle (2.5pt);
\end{scriptsize}
\end{tikzpicture}
\end{center}
$$\widehat{E_4} = \left\{0, \frac{1}{8}, \frac{1}{5}, \frac{1}{4}, \frac{2}{5}, \frac{1}{2}, \frac{13}{20}, \frac{37}{50}, \frac{4}{5}, 1\right\}$$

\begin{center}
\definecolor{xdxdff}{rgb}{0.49019607843137253,0.49019607843137253,1.}
\begin{tikzpicture}[line cap=round,line join=round,>=triangle 45,x=10.0cm,y=1.0cm][$\mathcal{P}_5:\quad $]
\clip(-0.06590059606070635,-0.5153678449999174) rectangle (1.1871596350345994,0.32666340089550305);
\draw [line width=2.pt] (0.,0.)-- (1.,0.);
\begin{scriptsize}
\draw [fill=xdxdff] (0.,0.) circle (2.5pt);
\draw [fill=xdxdff] (1.,0.) circle (2.5pt);
\draw [fill=xdxdff] (0.5,0.) circle (2.5pt);
\draw [fill=xdxdff] (0.25,0.) circle (2.5pt);
\draw [fill=xdxdff] (0.4,0.) circle (2.5pt);
\draw [fill=xdxdff] (0.65,0.) circle (2.5pt);
\draw [fill=xdxdff] (0.74,0.) circle (2.5pt);
\draw [fill=xdxdff] (0.8,0.) circle (2.5pt);
\draw [fill=xdxdff] (0.125,0.) circle (2.5pt);
\draw [fill=xdxdff] (0.2,0.) circle (2.5pt);
\draw [fill=xdxdff] (0.8,0.) circle (2.5pt);
\draw [fill=xdxdff] (0.9,0.) circle (2.5pt);
\draw [fill=xdxdff] (0.96,0.) circle (2.5pt);
\end{scriptsize}
\end{tikzpicture}
\end{center}
$$\widehat{E_5} = \left\{0, \frac{1}{8}, \frac{1}{5}, \frac{1}{4}, \frac{2}{5}, \frac{1}{2}, \frac{13}{20}, \frac{37}{50}, \frac{4}{5}, \frac{9}{10}, \frac{24}{25}, 1\right\}$$
\end{exam}

Several other questions and generalizations related to Kakutani's procedure have been studied. Discrepancy estimates were considered in \cite{CarboneLS, DrmotaInfusino}. Higher-dimensional analogues were studied in \cite{KakutaniInRd} and \cite{Smilansky}. The effect of beginning with an initial partition different from $\{[0,1]\}$ was investigated in \cite{AistleitnerHofer}.

The result most closely related to the present work is due to Pollicott and Sewell \cite{PollicottSewell}. They considered an infinite version of the Kakutani splitting procedure. In their setting, one starts with a countable partition $\mathcal{P}$, and at each stage all intervals of maximal length are subdivided according to this same partition. Instead of considering all endpoints in the current partition, they consider the set of endpoints of intervals which have already been split. More precisely, if $\{\mathcal{P}_n\}_{n \geq 0}$ is the resulting sequence of partitions, they define
$${E_n} := \left\{\min(I), \max(I): I \in \bigcup_{i = 0}^n \mathcal{P}_i\setminus \mathcal{P}_{i+1}\right\}.$$
For simplicity, they restrict their attention to the corresponding set \(L_n\) of left endpoints.

Let $\text{Leb}$ be the Lebesgue measure on $[0,1]$.

Associated to the system $\{T_i\}_{i \in \mathcal{I}}$ is a full-branch expanding map $f := f_\mathcal{P}$, defined by the inverse branches $\{T_i^{-1}\}$. Let $\Sigma = \mathcal{I}^\N$ be the full shift and $\pi\colon \Sigma \to [0,1]$ be the coding map satisfying
$$f\circ \pi = \pi \circ \sigma.$$
The geometric potential associated to the partition is
$$\gamma_{\mathcal{P}}(\underline{x}) = \log f'(\pi(\underline{x}))$$

The arithmetic properties of this potential play an essential role. Our main result assumes that $\gamma_\mathcal{P}$  is nonlattice.

\begin{teo}\label{IntroMainTheorem}
Let $\mathcal{I}$ be an at most countable set of indices, and let $\mathcal{P}$ be the partition generated by a family $\{T_i\}_{i\in \mathcal{I}}$ satisfying the following assumptions:
\begin{itemize}
    \item each $T_i$ is a strictly increasing $\mathcal{C}^1$ diffeomorphism from $[0,1]$ onto its image, and $T_i^{-1}\in \mathcal{C}^{1 + \varepsilon}$ for some \(\varepsilon\in(0,1)\);
    \item there exists \(c_M\in(0,1)\) such that
    $$0 < T_i'(x) < c_M < 1,$$
    for every $i\in \mathcal{I}$ and every $x\in [0,1]$;
    \item the geometric potential $\gamma_\mathcal{P}$ is nonlattice (Definition \ref{1.2LatticeDefinition}) regular potential (Definition \ref{1.2RegularPotential}).
\end{itemize}
Let $\{L_n\}_{n \geq 0}$ be the sequence of left endpoints of intervals which have been split up to time $n$, and define
$$\mu_n = \frac{1}{\#L_n} \sum_{x\in L_n}\delta_x$$
Then $L_n$ becomes uniformly distributed with respect to Lebesgue measure. That is, for every interval $J\subset [0,1]$,
$$\lim_{n\to \infty} \mu_n(J) = \leb(J).$$
\end{teo}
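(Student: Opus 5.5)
We would follow the renewal-theoretic strategy of Pollicott and Sewell, replacing their linear counting by a nonlinear renewal theorem for the transfer operator of the geometric potential. Intervals in the splitting process are images $T_{\underline{i}}([0,1]) := T_{i_1}\circ\cdots\circ T_{i_k}([0,1])$ of finite words $\underline{i}=(i_1,\dots,i_k)$. For words $\underline{i}$ of length $|\underline i|=k$ set $\ell(\underline{i}) = |T_{\underline{i}}([0,1])|$. An interval is split before time $n$ exactly when its length exceeds a threshold $t_n$, where $t_n$ is the length of the maximal intervals at stage $n$, and $t_n\downarrow 0$. Up to boundary effects at the threshold, we therefore identify
$$L_n \approx \{T_{\underline{i}}(0) : \ell(\underline{i}) > t_n\}.$$
Writing $T = e^{-R}$, it suffices to show that for every interval $J$,
$$N(R,J):=\#\{\underline{i}: \ell(\underline{i})>e^{-R},\ T_{\underline i}(0)\in J\}\sim \leb(J)\, N(R,[0,1])\quad (R\to\infty).$$
The correction from equalities $\ell=t_n$ only involves intervals of a single length at each stage. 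We would control it by the same asymptotics applied to $J=[0,1]$ on a shrinking window: the renewal asymptotic $N(R)\sim Ce^{\delta R}$ shows that the number of lengths in $[e^{-R-\eta},e^{-R}]$ is $O(\eta)N(R)$.

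\textbf{Step 1: relate lengths to Birkhoff sums.} By the mean value theorem and bounded distortion, which follows from $T_i^{-1}\in\mathcal{C}^{1+\varepsilon}$ together with the uniform contraction $c_M<1$, we have
$$\log \ell(\underline{i}) = -\gamma^{k}_{\mathcal P}(\underline{i}\,\underline{x}) + O(1)$$
uniformly. More precisely, $\log\ell(\underline{i}) = \log T_{\underline i}'(\xi)$ for some $\xi\in[0,1]$, and by the chain rule $\log T_{\underline{i}}'(y) = -\sum_{j<k}\gamma_{\mathcal P}(\sigma^j(\underline{i}\,\underline{y}))$. The $O(1)$ error is not small, so we avoid it by working with the exact derivative at a varying point: we count, for a fixed $x$,
$$N_x(R,J)=\#\{\underline{i} : -\log T_{\underline{i}}'(x) < R,\ T_{\underline{i}}(x)\in J\}.$$
We then sandwich $N(R,J)$ between $N_x(R\pm\eta,J_{\pm})$ for slightly enlarged and shrunk intervals $J_\pm$. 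This works because, for $|\underline i|$ large, $T_{\underline i}(0)$ and $T_{\underline i}(x)$ lie in the same tiny interval, and the ratio $T'_{\underline i}(x)/\ell(\underline i)$ depends only, up to $\eta$, on a tail. To handle that tail ratio we would pass to the function $\underline i\mapsto \log(\ell(\underline i)/T'_{\underline i}(x))$, which converges along the coding and enters as a continuous weight.

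\textbf{Step 2: renewal theorem.} Write the Laplace transform
$$\sum_{k}\sum_{|\underline i|=k}\mathbf 1_J(T_{\underline i}x)\,e^{-s(-\log T_{\underline i}'(x))}=\sum_k (\mathcal L_{-s\gamma}^k \mathbf 1_J)(x),$$
where $\mathcal L_{-s\gamma}$ is the transfer operator with weights $(T_i')^s$, and $\mathcal L_{-\gamma}$ preserves Lebesgue measure since it is the Perron–Frobenius operator of $f$. The regularity assumption (Definition \ref{1.2RegularPotential}) gives a spectral gap on Hölder functions for real $s$ near $1$, with pressure $P(-s\gamma)$ finite and $P(-\gamma)=0$. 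The leading eigenfunction is $h=1$ with eigenmeasure $\leb$. Hence the resolvent $(I-\mathcal L_{-s\gamma})^{-1}$ has a simple pole at $s=1$ with residue $\leb(J)\,/\!\int\gamma\,d\mu$, where $\mu$ is the acv invariant measure. The nonlattice hypothesis (Definition \ref{1.2LatticeDefinition}) gives that $1$ is not in the spectrum of $\mathcal L_{-(1+it)\gamma}$ for $t\ne0$. A Wiener–Ikehara / Lalley-type renewal argument then gives
$$N_x(R,J)\sim \frac{\leb(J)}{\int\gamma\,d\mu}\,e^{R}.$$
Indicators $\mathbf 1_J$ are not Hölder, so we approximate them above and below by Hölder functions and use positivity of the operator.

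\textbf{Step 3: conclude.} Dividing, $N_x(R,J)/N_x(R,[0,1])\to\leb(J)$. Combining this with the sandwich from Step 1 and letting $\eta\to0$ yields $\mu_n(J)\to\leb(J)$.

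\textbf{Main obstacle.} The hardest step is Step 2 in the countable alphabet case. There we must verify that regularity of the potential gives analyticity of $s\mapsto\mathcal L_{-s\gamma}$ on a half-plane strip $\Re s>1-\epsilon$ and uniform control of the resolvent on vertical lines, as needed for the Tauberian argument. We would first try to establish these properties by invoking the Mauldin–Urbański thermodynamic formalism for finitely primitive countable shifts, using summability of $\sup (T_i')^{s}$ for $s$ near $1$.
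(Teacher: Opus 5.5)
\textbf{The gap: you count words, but $\mu_n$ counts points.} Your reduction ``it suffices to show $N(R,J)\sim\leb(J)\,N(R,[0,1])$'' fails. $N(R,J)$ counts \emph{words}, i.e.\ split intervals. But $\mu_n$ is uniform on the \emph{set} $L_n$ of distinct left endpoints, and these two counts differ.

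Suppose some symbol, say $1$, has $T_1(0)=0$. This is forced whenever $\mathcal I$ is finite, since some closed partition interval must contain $0$. Then $T_u(0)=T_{u1}(0)=T_{u11}(0)=\cdots$. So each point $T_u(0)$ is counted $\#\{j\ge0:\ \ell(u1^j)\ge t_n\}$ times, and this multiplicity depends on $u$ and $n$ and is unbounded. For Kakutani with $\alpha=2/5$, three intervals have been split by stage $2$, but $L_2=\{0,2/5\}$.

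Precisely, with $R_n=-\log t_n$ and non-strict inequalities,
\[
\#(L_n\cap J)=N(R_n,J)-N^{1}(R_n,J),\qquad N^{1}(R,J):=\#\{\underline i:\ \ell(\underline i\,1)\ge e^{-R},\ T_{\underline i}(0)\in J\}.
\]
Moreover, $\#L_n$ is asymptotically a strictly smaller multiple of $e^{R_n}$ than $N(R_n,[0,1])$. So your Step 3 proves equidistribution of the multiplicity-weighted measure $N(R,[0,1])^{-1}\sum_{\ell(\underline i)\ge e^{-R}}\delta_{T_{\underline i}(0)}$, not of $\mu_n$. In the affine case the gap closes trivially, because $N^{1}(R,J)=N(R+\log\alpha_1,J)$. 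For nonlinear branches, however, $\ell(\underline i\,1)/\ell(\underline i)$ varies with $\underline i$ through distortion. Hence $N^{1}$ is not a shift of $N$ and needs its own asymptotics.

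\textbf{What is needed, and how the paper does it.} You need a second asymptotic $N^{1}(R,J)\sim\leb(J)\,c_1e^{R}$ with $c_1$ independent of $J$, together with $c_1<c$ so that the difference is nondegenerate. This is the role of Lemma~\ref{2XLambdaCardinality} and Lemma~\ref{3.2MedidaCilindro}, which give $\#X_\lambda=\#A_\lambda-\#\{u\in A_\lambda:\ u1\in A_\lambda\}$ and its cylinder analogue. Both terms are then written as renewal functions, evaluated at $\emptyset$ and at the finite word $1$, for the exactly telescoping ratios $\alpha_{x_1\dots x_n}/\alpha_{x_2\dots x_n}$. That is why the paper needs the finite-word renewal theorem (Theorem~\ref{2RenewalTheoremNLFiniteWords}). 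The resulting limit $\frac{\alpha_vC_v(\emptyset)-\alpha_{v1}C_v(1)}{C(\emptyset)-\alpha_1C(1)}$ genuinely involves the subtracted terms.

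\textbf{How to repair your version.} In your fixed-base-point framework, condition on the last $m$ letters $u$ of $\underline i=w'u$ (the maps applied first). Bounded distortion gives $\log\ell(w'u)=\log\ell(u)+\log T'_{w'}(T_ux)+O(c_M^{m\varepsilon})$. The count then becomes a sum over $u$ of $N_{T_ux}(R+\log\ell(u)\pm\eta,J_\pm)$, and restricting to $u$ ending in $1$ yields $N^1$. This conditioning is also what makes your ``tail ratio as a continuous weight'' precise. That ratio shifts the threshold inside the indicator rather than multiplying the summand. It therefore requires the renewal asymptotic uniformly in the base point.

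\textbf{A smaller error.} $h\equiv1$ is false for nonlinear branches: $\mathcal L_{-\gamma}1=\sum_iT_i'\not\equiv1$, and $h$ is the density of the absolutely continuous invariant measure. What produces $\leb(J)$ is Lebesgue being the eigen\emph{measure}, so this cancels in your ratio. However, the constants $c,c_1$ have the form $\lim_m\sum_u\ell(u)h(T_ux)$, and they involve $h$ at different points.
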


The proof follows the general strategy of Pollicott and Sewell, but several new difficulties appear in the nonlinear setting. First, the length of a cylinder \(T_v[0,1]\) is no longer exactly multiplicative in the word \(v\). Instead, it must be compared with Birkhoff sums of the geometric potential \(\gamma_{\mathcal P}\). Second, the symbolic counting functions which arise in the proof are governed by renewal theorems for subshifts of finite type. We use renewal theorems of Lalley \cite{Lalley} and \cite{InfiniteRenewal} to obtain the required asymptotics.

The lattice case has a different nature. In the finite alphabet setting, we prove a characterization showing that if the geometric potential is lattice, then the corresponding nonlinear system is smoothly conjugate to an affine model. More precisely, the geometric potential is cohomologous to a function which is constant on cylinders of length one. Nevertheless, this characterization does not make the lattice case a formal consequence of the affine case. The lattice renewal asymptotics retain the residue class of the logarithmic scale parameter, and this produces additional fractional part terms. We give an example showing that these terms are not invariant under smooth conjugacy.

This paper is organized as follows. In Section \ref{SectionPrelim}, we introduce the notation, the main result, and recall the results from thermodynamic formalism and renewal theory that will be used. In Section \ref{SectionProof} we prove Theorem \ref{IntroMainTheorem}. Finally in Section \ref{SectionLattice} we study the lattice case: we prove a finite alphabet characterization of lattice geometric potentials as smooth conjugates of affine models, and explain why the lattice renewal asymptotics do not lead directly to the nonlattice equidistribution conclusion.

\subsection*{Acknowledgment}
I am grateful to Godofredo Iommi for proposing the problem. And to Brian Marcus and Pablo Shmerkin for their useful comments and guidance. In particular, I thank Pablo Shmerkin for suggesting the characterization of lattice potentials.

\section{Preliminaries}\label{SectionPrelim}
\subsection{Notation and main result}
%Objective: Establish notation for express everything as composing transformations.

\begin{defi}\label{1.1DefPartition}
A partition of $[0,1]$ is a finite or countable collection 
$\mathcal{P} = \{I_j\}_{j \in \mathcal{I}}$ of closed subintervals of $[0,1]$, each of positive length such that 
\begin{itemize}[$\bullet$]
    \item for $i\neq j$, $\text{int}(I_i) \cap \text{int}(I_j) = \varnothing$.
    \item $\sum_{j \in \mathcal{I}} \leb(I_j)= 1$.
\end{itemize}
\end{defi}

\begin{defi}\label{1.1DefPartitionGeneratedbyTi}
Let $\mathcal{I}$ be a set of indices at most countable and consider $\{T_i\}_{i\in \mathcal{I}}$, a collection of functions from $[0,1] \to [0,1]$ such that 
\begin{itemize}
    \item for all $i \in \mathcal{I}$, $T_i [0,1]$ is a closed subinterval of $[0,1]$;
    \item for $i \neq j$
    $$T_i([0,1)) \cap T_j([0,1)) = \varnothing;$$
    \item the sum of the length of intervals $T_i[0,1)$ is 1, i.e.,
    $$\sum_{i\in \mathcal{I}} \leb(T_i [0,1 )) = 1.$$
    \item each $T_i$ is a strictly increasing $\mathcal{C}^1$ diffeomorphism from $[0,1]$ onto $T_i[0,1]$ and the derivative of the inverse function of $T_i$ is $\varepsilon$-Hölder continuous and positive, i.e., $T_i^{-1}\in \mathcal{C}^{1 + \varepsilon}$ for some $\varepsilon \in (0,1)$ and $T_i'(x) > 0$ for all $x\in [0,1]$;
    \item there exists $c_M \in (0,1)$ such that for each $i \in \mathcal{I}$ and all $x\in [0,1]$, $0 <T_i'(x) < c_M < 1$.
    \end{itemize}
Note that $\{T_i[0,1]\}_{i\in \mathcal{I}}$ is a partition in the sense of Definition \ref{1.1DefPartition}. We call $\{T_{i}[0,1]\}_{i\in \mathcal{I}}$ the partition generated by the functions $\{T_i\}_{i \in \mathcal{I}}$.
\end{defi}

\begin{exam}\label{1.1ExampleKakutani}
Given any $\alpha\in (0,1)$, the partition $\{[0,\alpha], [\alpha, 1]\}$ is the one generated by $T_1\colon [0,1] \to [0,\alpha]$ given by $x\mapsto \alpha x$ and $T_2\colon [0,1] \to [\alpha,1]$ given by $x\mapsto (1- \alpha )x + \alpha$.
\end{exam}

Let $\mathcal{P}$ be the partition generated by $\{T_i\}_{i \in \mathcal{I}}$.

\begin{defi}[Kakutani splitting procedure]
Let $\mathcal{Q}$ be the partition generated by $\{S_j\}_{j \in \mathcal{J}}$.  The $\mathcal{P}$-refinement of $\mathcal{Q}$ is the partition obtained by taking the intervals of largest Lebesgue measure $\{S_k[0,1]\}_{k\in \mathcal{K}}$ and replacing them with the subintervals $\{(S_k \circ T_i)[0,1]\}_{i\in \mathcal{I}, k \in \mathcal{K}}$.
\end{defi}

\begin{defi}\label{1.1DefinSchemeProcess}
The scheme of interval substitution generated by $\{T_i\}_{i\in \mathcal{I}}$ is the sequence of partitions $\{\mathcal{P}_n\}_{n = 0}^{\infty}$ defined by 
\begin{itemize}
    \item $\mathcal{P}_0$ is the trivial partition (Generated by the identity).
    \item $\mathcal{P}_{n + 1}$ is the $(T_i)$-refinement of $\mathcal{P}_n$.
\end{itemize}
\end{defi}

\begin{exam}
The scheme of interval substitution studied in \cite{KakutaniOriginal} and \cite{AdleryFlatto} corresponds to the partition generated by the two maps $\{T_1, T_2\}$ defined in Example \ref{1.1ExampleKakutani}.
\end{exam}

\begin{exam}
The regularity assumptions above cannot simply be omitted. For instance, $T_1 (x) = \sqrt{x}/2$ and $T_2(x) = (x+ 1)/2$, see \cite{PollicottSewell}.
\end{exam}

We now associate to the sequence of partitions $\{\mathcal{P}_n\}_{n = 0}^\infty$ a sequence of families of left endpoints of split intervals.

\begin{defi}
Given a substitution scheme of intervals $\{\mathcal{P}_n\}_{n\in \N}$ generated by the maps $\{T_i\}_{i\in \mathcal{I}}$, define for all $n\in \N$, let
$$\widehat{L}_n := \{\min (I): I \in \mathcal{P}_{n + 1}\}.$$
Note that when $\mathcal{I}$ is infinite, then $\widehat{L}_n$ is also infinite. This is not useful if we want to define a probability measure uniformly distributed on $\widehat{L}_n$. To fix this, and following \cite{PollicottSewell}, we define
$$L_n = \{\min(I): I \in \mathcal{P}_k \setminus \mathcal{P}_{k + 1} \text{ for some } 0 \leq k \leq n\}.$$
This set can be understood as the set of left endpoints of intervals which have been split and it is always finite. 
%In the finite case the equidistribution of $L_n$ implies the equidistribution of $\widehat{L}_n$.
\end{defi}

Let 
\begin{equation}\label{1.1DefMeasures}
    \mu_n = \frac{1}{\# L_n} \sum_{x\in L_n}\delta_x,
\end{equation}
where $\delta_x$ is the measure concentrated at the point $x$.

%When $\mathcal{I}$ is finite, we define 
%$$\widehat{\mu}_n = \frac{1}{\# \widehat{L}_n} \sum_{x\in \widehat{ L_n}}\delta_x.$$

Volčič in \cite{Volcic} proved that when $\mathcal{I}$ is finite, and each transformation $\{T_i\}_{i \in \mathcal{I}}$ is an affine transformation, the set $\widehat{L}_n$ becomes equidistributed as $n\to \infty$. Pollicott and Sewell in \cite{PollicottSewell} under the hypothesis of finite entropy, proved that the set $L_n$ becomes Lebesgue equidistributed as $n \to\infty$. The main objective of this work is to generalize this notion to non-affine transformations. 

By construction, the maps $\{T_i\}_{i\in \mathcal{I}}$ define an iterated function system whose attractor is $[0,1]$. Equivalently, the inverse branches $\{T_{i}^{-1}\}_{i \in \mathcal{I}}$ define a full-branch expanding map $f_\mathcal{P}$ on $[0,1]$, up to the ambiguity at partition endpoints or a set of Lebesgue measure zero, where we define $f_\mathcal{P}$ arbitrarily, i.e.
\begin{equation}\label{1.1DefiSistemaDinamico0,1}
    f_\mathcal{P}(x) = \begin{cases}
    0 &\text{if } x=0\\
    T_{i}^{-1}(x) &\text{if } x\in T_i([0,1))\\
    1 &\text{otherwise.}
\end{cases}
\end{equation}
When the partition is understood, we will use $f$ instead of $f_\mathcal{P}$.

Let $\Sigma$ be $\mathcal{I}^\N$ endowed with the product topology and let $\sigma\colon \Sigma \to \Sigma$ be the left shift. We will see that there exists a continuous function $\pi\colon \Sigma \to [0,1]$ such that 
$$f_\mathcal{P}\circ \pi = \pi \circ \sigma.$$
We define the geometric potential of $\mathcal{P}$ as $\gamma_{\mathcal{P}}\colon \Sigma \to \R$ by 
$$\gamma_\mathcal{P}(\underline{x}) = -\log T_{x_1}'(\pi(\sigma \underline{x})).$$
Again, when the partition is understood, we will use $\gamma$ instead of $\gamma_{\mathcal{P}}$.

\begin{defi}
Let $\gamma\colon \Sigma\to \R$. We say that $\gamma$ is lattice if there exists a continuous function $\psi \colon \Sigma \to \R$, a real number $a> 0$ and a continuous function $\zeta\colon \Sigma \to a\Z$ such that 
$$\gamma - \zeta = \psi - \psi \circ \sigma.$$
Otherwise, $\gamma$ is called nonlattice.
\end{defi}

The following theorem is the main result that we are going to prove in this work.

\begin{teo}\label{1MainTheo}
Let $\mathcal{I}$ be an at most countable set of indices, and let $\mathcal{P}$ be the partition generated by a family of functions $\{T_i\}_{i \in \mathcal{I}}$ satisfying the assumptions of Definition \ref{1.1DefPartitionGeneratedbyTi}. Suppose moreover that the geometric potential $\gamma_\mathcal{P}\colon \Sigma \to \R$, defined by 
$$\gamma_\mathcal{P}(\underline{x}) = -\log T_{x_1}'(\pi(\sigma\underline{x})),$$
is nonlattice and is a regular potential (Definition \ref{1.2RegularPotential}). Then the sequence $\{L_n\}$ of left endpoints of split intervals is Lebesgue equidistributed, that is, for every subinterval $J\subset [0,1]$
$$\lim_{n \to\infty} \mu_{n}(J) = \text{Leb}(J).$$
%Furthermore, if $\mathcal{I}$ is finite,
%$$\lim_{n \to \infty} \widehat{\mu}_n(I) = \leb(I).$$
\end{teo}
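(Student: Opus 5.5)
We follow the strategy of Pollicott and Sewell and reduce the statement to a symbolic counting problem controlled by a renewal theorem.

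\textbf{Step 1: symbolic description of $L_n$.} Every interval arising in the scheme has the form $T_v[0,1]$, where $v=v_1\cdots v_k$ is a finite word over $\mathcal{I}$ and $T_v=T_{v_1}\circ\cdots\circ T_{v_k}$. Such an interval is split exactly when it is, at some stage, of maximal length among the current intervals. Because the maps are contractions, the lengths of unsplit intervals strictly decrease along the process. Hence there is a threshold $t_n\to\infty$ with the following property: up to the boundary words of length exactly $e^{-t_n}$, $L_n$ consists of the points $T_v(0)$ over the words $v$ with $\leb(T_v[0,1])\ge e^{-t_n}$. Prefixes of such words automatically satisfy the same inequality.

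\textbf{Step 2: bounded distortion.} The mean value theorem gives $\leb(T_v[0,1])=T_v'(\xi)$ for some $\xi\in[0,1]$. Bounded distortion, which follows from the Hölder continuity of $(T_i^{-1})'$ and the uniform contraction $c_M<1$, then yields
$$\left|\log \leb(T_v[0,1]) + S_{|v|}\gamma(v\underline{x})\right|\le C$$
uniformly in $v$ and $\underline{x}$, where $S_k\gamma$ denotes the Birkhoff sum. So the condition $\leb(T_v[0,1])\ge e^{-t}$ is, up to a bounded error, the condition $S_{|v|}\gamma(v\underline{x})\le t$.

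\textbf{Step 3: renewal asymptotics and equidistribution.} Define
$$N(t)=\#\{v:\leb(T_v[0,1])\ge e^{-t}\},\qquad N_J(t)=\#\{v:\leb(T_v[0,1])\ge e^{-t},\ T_v(0)\in J\}.$$
It suffices to show $N_J(t)/N(t)\to\leb(J)$. For this, apply the nonlattice renewal theorem of Lalley in the finite case, or its countable-alphabet version \cite{InfiniteRenewal}, to the regular potential $\gamma$. Its pressure satisfies $P(-\gamma)=0$, since the lengths sum to one, and the equilibrium state of $-\gamma$ projects under $\pi$ to the absolutely continuous invariant measure.

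\emph{Main obstacle.} A bounded error $C$ in Step 2 only gives $N(t)\asymp e^{t}$; it does not give a limit. To remove it, replace the length by the exact quantity $\leb(T_v[0,1])=\int_0^1 T_v'$ and write
$$-\log T_v'(y)=S_{|v|}\gamma(v\,\underline{y}).$$
Then apply the renewal theorem with a weight function $g(\underline{x})$ that records the exact ratio, so that the count becomes a genuine renewal sum $\sum_k\sum_{|v|=k}\mathbf{1}\{S_k\gamma(v\underline{x})+g\le t\}$. This should give $N(t)\sim Ce^{t}$ with an explicit constant.

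\emph{Localizing to $J$.} Let $\rho$ be the density of the absolutely continuous invariant measure. Splitting $J$ into cylinders of a fixed depth $m$, apply the same argument to each cylinder $T_w[0,1]$, using the self-similarity $v=wu$. The count then localizes to a constant times $\leb(T_w[0,1])$: the factor $\rho$ coming from the renewal constant should cancel against the distortion factor, because the count is taken in the Lebesgue-normalized variable. Summing over the cylinders inside $J$ and letting $m\to\infty$ handles the partial cylinders at the edges of $J$.

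The step I expect to be hardest is making the renewal constant come out exactly proportional to $\leb(T_w[0,1])$. It requires tracking the function $g(\underline{x})=\log\bigl(T_v'(\pi\underline{x})/\leb(T_v[0,1])\bigr)$ through the renewal formula. That function is only asymptotically independent of $v$, with error decaying along the tail thanks to the Hölder continuity, and I would handle this by a sandwich argument with $\epsilon$-perturbed thresholds. In the countable case, tightness near accumulation points is controlled by regularity, namely finite entropy and summable variations.
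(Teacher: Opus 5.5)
You have the paper's overall architecture: the Pollicott--Sewell reduction to cylinders, a renewal theorem for the counting function, and removal of the distortion error. There are two genuine gaps, and the second sits exactly where the paper's proof does its work.

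\textbf{Words versus points.} The reduction ``it suffices to show $N_J(t)/N(t)\to\leb(J)$'' is not correct as stated. $\mu_n$ is the normalized counting measure on the \emph{set} $L_n$, whereas $N_J$ and $N$ count \emph{words}, and $v\mapsto T_v(0)$ is not injective. Whenever some symbol $1$ has $T_1(0)=0$ (always the case for a finite alphabet), the point $T_u(0)$ is represented by $u,u1,u11,\ldots$. Its multiplicity is of order $\log(\alpha_u/\lambda)$, which is unbounded. The paper handles this with
\[
\#X_\lambda=\#A_\lambda-\#\{u\in A_\lambda:u1\in A_\lambda\},
\]
together with the analogous identity inside $T_v[0,1)$. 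So you need a second renewal count, based at the word $1$, and you must show it localizes to cylinders with the same factor as the first count.

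\textbf{The main obstacle is left as a heuristic.} Your $g$ depends on $v$, and not negligibly: $\log\bigl(T_v'(\pi\underline{x})/\alpha_v\bigr)$ is governed by the \emph{last} symbols of $v$, those adjacent to the base point. So $\sum_k\sum_{|v|=k}\mathbf 1\{S_k\gamma(v\underline{x})+g\le t\}$ is not a renewal sum, and ``asymptotically independent of $v$'' is false. The paper supplies three ingredients that your proposal lacks.
\begin{enumerate}[(i)]
\item Extend $\gamma$ to finite words via $\beta(x_1\cdots x_n)=\alpha_{x_1\cdots x_n}/\alpha_{x_2\cdots x_n}$. Birkhoff sums then telescope exactly to $-\log\alpha_v$, so $\#A_\lambda=N_*(-\log\lambda,\emptyset)$ on $\Sigma\cup\Sigma_*$. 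A finite-word renewal theorem then gives exact asymptotics: iterate the renewal equation $n$ times, and compare $N_*(\cdot,y)$ for long words $y$ with $N(\cdot,y')$ for $y'\in[y]$.
\item For a cylinder, use the cohomologous potential $\gamma_v=\gamma-\log\varphi_v+\log\varphi_v\circ\sigma$ with $\varphi_v=T_v'\circ\pi$. The cohomology relations for $h$ and $\nu$ then give the limiting ratio as $\int T_v'\circ\pi\,d\nu_{-\gamma}$.
\item Identify $\pi_*\nu_{-\gamma}=\leb$. Lebesgue is conformal for $-\log f'$ by the change-of-variables formula, and the conformal measure is unique. This is precisely what turns the ratio into $\int_0^1T_v'=\alpha_v$.
\end{enumerate}
Without (iii), your claim that ``$\rho$ should cancel against the distortion factor'' has no mechanism. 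The density $\rho$ enters the renewal constant only through $h_{-\gamma}=d\mu_{-\gamma}/d\nu_{-\gamma}$, and identifying $h_{-\gamma}$ with $\rho\circ\pi$ already presupposes $\pi_*\nu_{-\gamma}=\leb$.
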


%When $\mathcal{I}$ is finite, this implies the equidistribution of $\widehat{L}_n$, generalizing the original problem of Kakutani to a nonlinear version of it.

For clarity, we use a reindexing of $L_n$. Note that for each $\lambda \in (0,1)$ there exists $n \in \N$ such that all the intervals $I$ in $\mathcal{P}_{n + 1}$ satisfy 
$$\leb(I) < \lambda.$$
This follows from the uniform contraction assumption. This motivates the following definition.

\begin{defi}
For $\lambda \in (0,1]$, let $n(\lambda)$ be the largest integer $n$ such that there exists $I\in \mathcal{P}_n$ with
$$\text{Leb}(I) \geq \lambda.$$
Equivalently, $\mathcal{P}_{n(\lambda) + 1}$ is the first partition in the process such that every interval has Lebesgue measure strictly less than $\lambda$.
\end{defi}

Let 
$$X_\lambda = L_{n(\lambda)}.$$
In a similar way, when $\mathcal{I}$ is finite, let 
$$\widehat{X}_\lambda = \widehat{L}_{n(\lambda)}.$$

To simplify notation for the composition of words we will define the following set.

\begin{defi}
Given $\mathcal{I}$, define the set $\Sigma_*$ as the set of all finite sequences from the alphabet $\mathcal{I}$, i.e.,
$$\Sigma_* = \bigcup_{n = 0}^\infty \mathcal{I}^n,$$
with $\mathcal{I}^0 := \{\emptyset\}$, where $\emptyset$ is the notation used for the empty sequence. For each $n\in \N$ we define the subset 
$$\Sigma_n = \mathcal{I}^n,$$
i.e., words of length $n$. For a word $v_1v_2\ldots v_n \in \Sigma_*$, we denote 
$$|v| = n.$$
\end{defi}

For $v = v_1 v_2\ldots v_n\in \Sigma_*$, we write
$$T_v = T_{v_1}\circ T_{v_2} \circ \ldots \circ T_{v_n},$$
and
$$\alpha_v = \leb(T_v[0,1)).$$
We set $T_\emptyset = \text{Id}$ and $\alpha_{\emptyset} = 1$.

Using this notation, it is possible to rewrite the sets $\widehat{X}_\lambda$ and $X_\lambda$  as
$$\widehat{X}_\lambda:= \{T_{vj}(0): v\in \Sigma_*, \alpha_v \geq \lambda, \, j\in \mathcal{I}\} \quad \text{ and } \quad X_\lambda = \{T_v(0): v\in \Sigma_*, \alpha_v \geq \lambda\}.$$

For $\lambda \in (0,1)$, define 
$$\mu_\lambda = \frac{1}{\# X_\lambda} \sum_{x\in X_\lambda} \delta_x,$$
and when $\mathcal{I}$ is finite,
$$\widehat{\mu}_\lambda=\frac{1}{\# \widehat{X}_\lambda} \sum_{x\in \widehat{X}_\lambda} \delta_x.$$

Our objective is to prove that for every interval $I\subset [0,1]$
$$\lim_{\lambda \to 0^+} \mu_\lambda(I) = \leb(I),$$
and when $\mathcal{I}$ is finite:
$$\lim_{\lambda \to 0^+}\widehat{\mu}_\lambda(I) = \leb(I).$$

The strategy of the proof is to study the symbolic counting function associated to $A_\lambda$ as $\lambda\to 0^+$ with the help of a Renewal Theorem given in \cite{InfiniteRenewal}, motivated by the work of \cite{Lalley}. We then relate this asymptotic behaviour to the sets $X_\lambda$. To do that, we will need the following useful definition.

\begin{defi}
For $\lambda \in (0,1]$ let 
$$A_\lambda:= \{v\in \Sigma_*: \alpha_v \geq \lambda\}.$$
\end{defi}

We will define a dynamical system on the set $\Sigma\cup \Sigma_*$; for this reason we will need the shift extended to finite words.

Define
$$\sigma_*(x_1, \ldots, x_n) = (x_2, \ldots, x_n) \quad \text{ and }\quad \sigma_*(\emptyset) = \emptyset.$$
Let 
$$Y = \Sigma \cup \{(x_1, \ldots, x_n, 0, 0,\ldots, ): n\geq 0, x_i\in \mathcal{I}\}.$$
Then $Y\subset (\mathcal{I} \cup \{0\})^\N$. The corresponding transition matrix $A$ is given for all $i,j\in \mathcal{I}$, by
$$A(i,j ) = A(i, 0) =A(0,0) = 1, \quad \text{ and }\quad A(0,i) = 0. $$
With this construction, sequences in $\Sigma_*$ may be extended to infinite sequences by adjoining an additional symbol $0$ to the alphabet $\mathcal{I}$ and making the correspondence 
$$(x_1,\ldots, x_n) = ( x_1, \ldots, x_n, 0, 0, \ldots).$$
Thus the shift $\sigma_*$ on $\Sigma \cup \Sigma_*$ is a shift of finite type when $\mathcal{I}$ is finite and countable-state shift when $\mathcal{I}$ is countable. We use this embedding only as a notational device, allowing finite words to be treated as eventually constant infinite sequences. %To simplify the notation we are going to refer to $\sigma\colon \Sigma \cup \Sigma_* \to \Sigma \cup \Sigma_*$ as the shift in both spaces.

The thermodynamic formalism will be applied on the full shift $(\Sigma, \sigma)$, while estimates involving finite words will be obtained by restricting to the corresponding cylinders and the definition of the Ruelle operator.

%Unfortunately, the extension of $A$ is not irreducible, so $(\Sigma \cup \Sigma_*, \sigma_*)$ is not topologically mixing and therefore the thermodynamic formalism does not apply directly. However, we have a natural metric on $\Sigma \cup \Sigma_*$ as this shift of finite type. So we can define Hölder continuous functions in this space, Hölder continuous and try to use some result from Thermodynamic Formalism here to move it to this shift.

%In order to keep this notation, let $T_\emptyset\colon [0,1] \to [0,1]$ be the identity map. Then 
%$$\alpha_0 = \alpha_\emptyset = 1.$$

\subsection{Thermodynamic Formalism}

To state properly the main result, Theorem \ref{1MainTheo}, we recall some elements of the thermodynamic formalism.

Let $\Sigma$ be the space of all sequences taking values in the alphabet $\mathcal{I}$, i.e.,
$$\Sigma:= \mathcal{I}^\N = \{(x_1, x_2, \ldots) : x_j \in \mathcal{I}, \text{ for all }j\geq 1\}.$$
We endow $\mathcal{I}$ with the discrete topology and $\Sigma$ with the product topology. Let $\sigma \colon \Sigma \to \Sigma$ be the left shift.

For $w \in \Sigma_*\setminus\emptyset$, we denote the $w$-cylinder
$$[w] := \{x\in \Sigma:  x_i = w_i, \,\forall i \in \{1,\ldots, |w|\}\}.$$

The Gurevich pressure function of $\gamma\colon \Sigma \to\R$ with respect to the shift map $\sigma\colon \Sigma \to \Sigma$, is defined by
$$P(\gamma) :=\lim_{n\to\infty} \frac{1}{n} \log \sum_{w\in \Sigma_n} \exp \left(\sup_{\tau \in [w]} S_n \gamma(\tau)\right)$$
whenever this limit exists, possibly with value $+ \infty$, where 
$$S_n \gamma:= \sum_{j = 0}^{n - 1} \gamma\circ \sigma^j \quad \text{ and } \quad S_0\gamma := 0.$$

Let $\mathcal{C}(\Sigma)$ be the set of continuous real-valued functions on $\Sigma$. These functions are usually known as potentials. The set of bounded continuous functions in $\mathcal{C}(\Sigma)$ will be denoted by $\mathcal{C}_b(\Sigma)$. 

\begin{defi}[Hölder continuity]
For $\gamma \in \mathcal{C}(\Sigma)$ and $0 < \rho < 1$, let
\begin{align*}
    \text{var}_n(\gamma) &= \sup\{|\gamma(x)- \gamma(y)| : x_j = y_j, \,j= 1, \ldots, n\}\\
    \|\gamma\|_\rho &= \sup_{n\geq 1} \frac{\text{var}_n(\gamma)}{\rho^n},\\
    \mathcal{F}_\rho(\Sigma) &= \{\gamma\in \mathcal{C}(\Sigma): \|\gamma\|_\rho < \infty\}.
\end{align*}
\end{defi}

We denote by $\mathcal{F}_\rho^b(\Sigma)$ the subset of bounded Hölder continuous functions. By the observations made in the previous subsection, it is possible to equip $\Sigma \cup \Sigma_*$ with the subspace topology of the space $(\mathcal{I} \cup \{0\})^\N$. And then make these same definitions for $\mathcal{F}_\rho(\Sigma \cup \Sigma_*)$.

In order to study the central object of this section, namely the Perron-Frobenius operator of the potential $\gamma$, we will need to assume that
\begin{equation}\label{1.2SummableCondition}
    C_\gamma : = \sum_{e \in \mathcal{I}} \exp(\sup_{x\in [e]} \gamma(x)) < \infty.
\end{equation}
A function $\gamma$ satisfying \eqref{1.2SummableCondition} is called \textit{summable}.

\begin{defi}[Perron-Frobenius operator]
Let $\gamma\in \mathcal{F}_\rho(\Sigma)$ summable. The \textit{Perron-Frobenius operator} $\mathcal{L}_\gamma\colon \mathcal{C}_b(\Sigma) \to \mathcal{C}_b(\Sigma)$ for the potential $\gamma$ acting on $\mathcal{C}_b(\Sigma)$ is defined by
$$\mathcal{L}_\gamma(\varphi)(\underline{x}):= \sum_{\underline{y}: \sigma \underline{y} = \underline{x}} e^{\gamma(\underline{y})} \varphi(\underline{y}).$$
\end{defi}

The conjugate operator $\mathcal{L}_\gamma^*$ acting on $\mathcal{C}_b^* (\Sigma)$ can be restricted to the subset of finite Borel measures. 

\begin{teo}[Real Ruelle-Perron-Frobenius Theorem for infinite alphabets, \cite{MauldinUrbanski}]\label{1RuellesTheorem}
Suppose that $\gamma \in \mathcal{F}_\rho(\Sigma)$ for some $\rho\in (0,1)$ is summable. Then $\mathcal{L}_\gamma$ preserves the space $\mathcal{F}_\rho^b(\Sigma)$. Moreover the following hold.
\begin{enumerate}[(i)]
\item There is a unique Borel probability eigenmeasure $\nu_\gamma$ of the conjugate Perron-Frobenius operator $\mathcal{L}_\gamma^*$ and the corresponding eigenvalue is equal to $e^{P(\gamma)}$. Moreover, $\nu_\gamma$ is a Gibbs measure for $\gamma$.

\item The operator $\mathcal{L}_{\gamma}|_{\mathcal{F}_\rho^b(\Sigma)}$ has an eigenfunction $h_\gamma$ which is bounded from above and satisfies $\int h_\gamma \,d\nu_\gamma = 1$. Furthermore, there exists $R> 0$ such that $h_\gamma \geq R$ on $\Sigma$.

\item The function $\gamma$ has a unique $\sigma$-invariant Gibbs measure $\mu_\gamma$.

\item There exist constants $\overline{M} > 0$ and $\theta \in (0,1)$ such that for every $\varphi \in \mathcal{F}_\rho^b(\Sigma)$ and $n\in \N$
\begin{equation}
    \left\| e^{-nP(\gamma)} \mathcal{L}^n_\gamma(\varphi)-  \int \varphi\,d\nu_\gamma \cdot h_\gamma \right\|_\rho \leq \overline{M} \theta^n (\|\varphi\|_\rho + \|\varphi\|_\infty).
\end{equation}
\end{enumerate}
\end{teo}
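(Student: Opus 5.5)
The plan follows the route of Mauldin and Urbański, specialised to the full shift $\Sigma=\mathcal I^\N$. Since every symbol can follow every other, the big images and preimages property holds trivially, which removes most of the combinatorial difficulties.

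\textbf{Step 1 (preliminary estimates).} First I establish the bounded distortion estimate
$$|S_n\gamma(\underline x)-S_n\gamma(\underline y)|\le \frac{\|\gamma\|_\rho\,\rho}{1-\rho}=:K\quad\text{whenever } x_j=y_j,\ 1\le j\le n.$$
Combined with summability \eqref{1.2SummableCondition}, this gives $\|\mathcal L_\gamma^n 1\|_\infty\le C_\gamma^n<\infty$. It also shows that $\mathcal L_\gamma$ maps $\mathcal F^b_\rho(\Sigma)$ to itself, with a Lasota--Yorke type inequality
$$\|\mathcal L^n_\gamma\varphi\|_\rho\le C\,\|\mathcal L^n_\gamma 1\|_\infty\bigl(\rho^n\|\varphi\|_\rho+\|\varphi\|_\infty\bigr).$$
Both follow by splitting the sum over preimages according to their first $n$ symbols.

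\textbf{Step 2 (eigenmeasure, (i)).} Consider the map $\mu\mapsto \mathcal L^*_\gamma\mu/\mathcal L^*_\gamma\mu(1)$ on Borel probability measures. Because $\Sigma$ is not compact, I restrict it to a convex set of measures satisfying a uniform tail bound $\mu(\{x_1\notin F\})\le \epsilon(F)$, where the bound is dictated by the summability of $e^{\sup_{[e]}\gamma}$. This set is tight, hence weak$^*$-compact by Prokhorov. Schauder--Tychonoff then gives a fixed point $\nu_\gamma$ with $\mathcal L^*_\gamma\nu_\gamma=\lambda\nu_\gamma$.

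Iterating this identity on cylinders and using bounded distortion together with the full-shift structure gives the Gibbs property
$$\nu_\gamma([w])\asymp \lambda^{-n}e^{S_n\gamma(\underline x)},\qquad \underline x\in[w],\ |w|=n.$$
Summing over $w\in\Sigma_n$ identifies $\log\lambda$ with the Gurevich pressure $P(\gamma)$; in particular the limit defining $P(\gamma)$ exists. Uniqueness of the eigenmeasure follows afterwards from (iv).

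\textbf{Step 3 (eigenfunction and invariant measure, (ii)--(iii)).} Normalize to $\tilde{\mathcal L}=e^{-P(\gamma)}\mathcal L_\gamma$. By Step 2 the functions $\tilde{\mathcal L}^n1$ are bounded above and below uniformly in $n$: the lower bound uses the Gibbs property and one fixed symbol, which is where the full shift helps. By Step 1 they are also uniformly $\rho$-Hölder. The Cesàro averages therefore have a subsequence converging pointwise (Arzelà--Ascoli on cylinders plus a diagonal argument), and the limit $h_\gamma$ satisfies $\tilde{\mathcal L}h_\gamma=h_\gamma$, $h_\gamma\ge R>0$ and $\int h_\gamma\,d\nu_\gamma=1$. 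Setting $\mu_\gamma=h_\gamma\nu_\gamma$ produces a $\sigma$-invariant Gibbs measure. Its uniqueness follows from the standard argument that any two Gibbs measures for $\gamma$ are mutually absolutely continuous, combined with the ergodicity of $\mu_\gamma$.

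\textbf{Step 4 (spectral gap, (iv)), the main obstacle.} I would conjugate to the normalized operator $\widehat{\mathcal L}\varphi=h_\gamma^{-1}\tilde{\mathcal L}(h_\gamma\varphi)$, which fixes $1$ and has dual fixing $\mu_\gamma$. The Lasota--Yorke inequality makes $\widehat{\mathcal L}$ quasi-compact on $\mathcal F^b_\rho$ with essential spectral radius at most $\rho$, by Ionescu-Tulcea--Marinescu/Hennion. The difficulty is that $\mathcal F^b_\rho$ does not embed compactly in $\mathcal C_b(\Sigma)$ for a countable alphabet.

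To get around this I would use a Birkhoff cone instead. Take the cone
$$\Lambda_B=\{\varphi>0:\ \varphi(\underline x)\le e^{B\rho^n}\varphi(\underline y)\ \text{whenever } x_j=y_j,\ j\le n\}.$$
The Gibbs bounds of Step 2 show that $\widehat{\mathcal L}^N$ maps $\Lambda_B$ into a subcone of finite projective diameter, so it contracts the Hilbert metric by some factor $\theta_0<1$. This gives
$$\|\widehat{\mathcal L}^n\varphi-\textstyle\int\varphi\,d\mu_\gamma\|_\rho\le \overline M\theta^n\bigl(\|\varphi\|_\rho+\|\varphi\|_\infty\bigr)$$
for $\varphi$ in the cone. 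A general $\varphi$ is handled by writing $\varphi=(\varphi+c)-c$ with $c$ large enough that $\varphi+c\in\Lambda_B$. Undoing the conjugation yields (iv), and (iv) in turn forces the uniqueness claims in (i) and (iii).

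The step where I expect to spend the most care is the finite-diameter estimate for the cone image. It is here that the summability condition and the uniform lower bound $h_\gamma\ge R$ must be combined across infinitely many symbols.
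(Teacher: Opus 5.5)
The paper does not prove this statement. It is imported from Mauldin--Urba\'nski \cite{MauldinUrbanski} and used as a black box, so there is no in-paper argument to compare against. Your route is a genuine specialisation of that source. Mauldin--Urba\'nski treat finitely irreducible countable Markov shifts and build $\nu_\gamma$ by approximating with finite subsystems. You instead use the full-shift structure to run Schauder--Tychonoff directly and to obtain (iv) from a Birkhoff cone. Step 1 is fine, as are the Gibbs bound and pressure identification in Step 2 and the construction of $\mu_\gamma=h_\gamma\nu_\gamma$. However, one step fails as written.

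\textbf{The gap: your invariant set is not tight.} Compact subsets of $\mathcal I^{\N}$ lie in products $\prod_j F_j$ of finite sets. A tail bound on $x_1$ alone therefore does not stop mass escaping through $x_2,x_3,\dots$. For instance, if $p$ is an admissible law for $x_1$, the measures $p\otimes\delta_{(k,k,k,\dots)}$ all satisfy your bound and have no weakly convergent subsequence.

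The fix is to control every coordinate. Put $c=\inf\mathcal L_\gamma 1>0$, $s(F)=\sum_{e\notin F}e^{\sup_{[e]}\gamma}$ and $K=C_\gamma/c\ge 1$. Two estimates drive the argument:
\begin{itemize}
\item $\mathcal L_\gamma 1_{\{x_1\notin F\}}\le s(F)$;
\item $\mathcal L_\gamma 1_{\{x_j\notin F\}}=1_{\{x_{j-1}\notin F\}}\,\mathcal L_\gamma 1\le C_\gamma 1_{\{x_{j-1}\notin F\}}$ for $j\ge 2$.
\end{itemize}
Hence the set of probability measures with $\mu(\{x_j\notin F\})\le K^{j-1}s(F)/c$ for all $j\ge 1$ and all finite $F$ is invariant under $\mu\mapsto\mathcal L_\gamma^*\mu/\mathcal L_\gamma^*\mu(1)$. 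This set is convex and weak$^*$ closed, since the sets $\{x_j\notin F\}$ are clopen. It is nonempty, since it contains the Bernoulli measure with weights $e^{\sup_{[e]}\gamma}/C_\gamma$. It is tight: choose $F_j$ with $K^{j-1}s(F_j)/c<\eta 2^{-j}$. With this set, Step 2 goes through.

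\textbf{Three smaller corrections.}

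First, in Step 3 Arzelà--Ascoli on cylinders is unavailable, because when $\mathcal I$ is infinite each $[w]$ is homeomorphic to $\Sigma$ and hence not compact. Instead, diagonalise on a countable dense set such as the points $w\,e\,e\,e\cdots$. The uniform H\"older modulus then gives pointwise convergence everywhere. Pass to the limit in $\tilde{\mathcal L}$ and in $\int\cdot\,d\nu_\gamma$ by dominated convergence, using summability.

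Second, in Step 4 drop the Ionescu-Tulcea--Marinescu sentence. $\mathcal L_\gamma$ is not compact from $\mathcal F^b_\rho$ into $(\mathcal C_b(\Sigma),\|\cdot\|_\infty)$. Indeed, $g_k=\rho\,1_{\{x_2=k\}}$ is bounded in $\mathcal F^b_\rho$, while the functions $\mathcal L_\gamma g_k=\rho\,1_{[k]}\mathcal L_\gamma 1$ have disjoint supports and sup norm at least $\rho c$.

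The cone step itself is easier than you expect, provided the cone condition is also imposed for $n=0$, i.e.\ a global bound $\varphi(x)\le e^{B}\varphi(y)$. On the full shift, any $x,y$ have preimages $ex,ey$ that agree on one more symbol. Writing $\hat\gamma=\gamma+\log h_\gamma-\log h_\gamma\circ\sigma-P(\gamma)$, this gives $\widehat{\mathcal L}\Lambda_B\subset\Lambda_{\rho(B+\|\hat\gamma\|_\rho)}$ already with $N=1$. A subcone $\Lambda_{\kappa B}$ with a global ratio bound has finite Hilbert diameter in $\Lambda_B$, with no compactness needed. Without the $n=0$ condition the diameter is infinite, since nothing links different $1$-cylinders.

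Third, $\|\cdot\|_\rho$ in the statement is only the H\"older seminorm. So the uniqueness in (i) should be derived from the sup-norm convergence your cone estimate provides. Combine this with the observation that your Step 2 computation forces every eigenmeasure to have eigenvalue $e^{P(\gamma)}$.
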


\begin{defi}\label{1.2RegularPotential}
A non-identically zero potential $\gamma\colon \Sigma \to \R$ is said to be a \textit{regular potential} if 
\begin{itemize}
    \item it is Hölder continuous, i.e. $\gamma\in \mathcal{F}(\Sigma)$;
    \item the topological pressure of $-\gamma$ is zero;
    \item the potential $-t\gamma$ is summable for every $t$ in a neighbourhood of $1$.
    \item for all $t$ in a neighbourhood of $1$
    $$\int t\gamma \,d\mu_{-\gamma} < \infty.$$
\end{itemize}
\end{defi}

It is said that two functions $\gamma, \zeta\in \mathcal{F}_\rho(\Sigma)$ are \textit{cohomologous} if there exists $\varphi\in \mathcal{F}_\rho^b(\Sigma)$ such that 
$$\gamma - \zeta = \varphi - \varphi \circ \sigma.$$

If $\gamma, \zeta\in \mathcal{F}_\rho(\Sigma)$ are cohomologous for some $\rho \in (0,1)$ and they are both summable then they share the same invariant Gibbs measures. Furthermore if $\gamma - \zeta = \varphi - \varphi \circ \sigma$, for some $\varphi\in \mathcal{F}_\rho^b(\Sigma)$, then for every $\theta\in \mathcal{F}_\rho^b(\Sigma)$
$$\mathcal{L}_\gamma \theta = e^{-\varphi}\mathcal{L}_\zeta(e^{\varphi} \theta).$$
Consequently, assuming $P(\gamma) = P(\zeta) = 0$,
\begin{equation}\label{1.2EqCohomologus}
    \lambda_\gamma = \lambda_\zeta, \quad h_\gamma = \left( \int e^{\varphi} \,d\nu_\zeta\right) e^{-\varphi}h_\zeta, \quad \nu_\gamma = \frac{e^{\varphi} \nu_\zeta}{\int e^{\varphi}\,d\nu_\zeta}.
\end{equation}

\begin{defi}\label{1.2LatticeDefinition}
Let $\gamma\colon \Sigma\to \R$. We say that $\gamma$ is lattice if it is cohomologous to a function taking values in a discrete subgroup of the real numbers. Otherwise, $\gamma$ is called nonlattice.
\end{defi}

\subsection{Renewal Theory}

The key idea in the proof of Theorem \ref{1MainTheo} is to use a renewal theorem, adapted to symbolic dynamics.

Let $\gamma\in \mathcal{F}_\theta(\Sigma, \R)$ be a regular potential (recall Definition \ref{1.2RegularPotential}).

For $x\in \Sigma$ we will be interested in the asymptotic behaviour as $t\to \infty$ of the renewal function: 
\begin{equation}\label{2RenewalEquation}
    N(t,x) := \sum_{n = 0}^\infty \sum_{y: \sigma^n y = x} 1\{S_n\gamma(y) \leq t \}.
\end{equation}

\begin{teo}[{\cite[Theorem 3.1]{InfiniteRenewal}}]\label{2RTNonLatticeNormalRenewalTheorem}
Let $\gamma$ be a nonlattice regular potential in the sense of Definition \ref{1.2RegularPotential}. Then 
$$\lim_{t\to\infty}e^{-t} N(t,x) = \frac{h_{-\gamma}(x)}{\int \gamma\,d\mu_{-\gamma}} =: C_\gamma(x).$$
uniformly for $x\in \Sigma$.
\end{teo}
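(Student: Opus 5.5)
The plan is to follow Lalley's Laplace-transform approach, adapted to the countable alphabet by means of Theorem \ref{1RuellesTheorem}.

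\textbf{Step 1: the Laplace transform as a resolvent.} I first assume $S_n\gamma\geq cn$ for some $c>0$. In our geometric setting this is automatic, since $\gamma\geq -\log c_M>0$. For a general regular potential I would pass to a cohomologous function with this property. Under this assumption $N(t,x)$ is finite and nondecreasing in $t$, and vanishes for $t<0$. For $\mathrm{Re}\, s>1$ I compute
$$\widehat N(s,x):=\int_0^\infty e^{-st}N(t,x)\,dt=\frac1s\sum_{n\ge0}\sum_{\sigma^ny=x}e^{-sS_n\gamma(y)}=\frac1s\sum_{n\geq 0}\mathcal L_{-s\gamma}^n1(x).$$
Since $|e^{-s\gamma}|=e^{-(\mathrm{Re}\,s)\gamma}$ and $-t\gamma$ is summable for real $t$ near $1$, the operators $\mathcal L_{-s\gamma}$ act boundedly on $\mathcal F^b_\rho(\Sigma)$ for $\mathrm{Re}\,s$ near or above $1$. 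They also depend analytically on $s$, because each term is entire and the series converges uniformly. The spectral radius is at most $e^{P(-(\mathrm{Re}\,s)\gamma)}<1$ when $\mathrm{Re}\,s>1$, since $t\mapsto P(-t\gamma)$ is strictly decreasing (as $\gamma>0$) and $P(-\gamma)=0$. Hence $\widehat N(s,\cdot)=s^{-1}(I-\mathcal L_{-s\gamma})^{-1}1$ as an element of $\mathcal F^b_\rho$.

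\textbf{Step 2: behaviour near $s=1$.} By Theorem \ref{1RuellesTheorem}(iv), $\mathcal L_{-\gamma}$ has a simple isolated leading eigenvalue $1$ with spectral projection $\varphi\mapsto(\int\varphi\,d\nu_{-\gamma})h_{-\gamma}$, and the rest of its spectrum lies in a disc of radius $\theta<1$. Analytic perturbation theory then gives an analytic eigenvalue $\lambda(s)$ near $s=1$ with $\lambda(1)=1$ and
$$\lambda'(1)=-\int\gamma\,d\mu_{-\gamma}.$$
This derivative is finite by the last condition in Definition \ref{1.2RegularPotential}. Consequently
$$\widehat N(s,x)=\frac{h_{-\gamma}(x)}{(s-1)\int\gamma\,d\mu_{-\gamma}}+G(s,x),$$
with $G$ analytic near $s=1$ and bounded uniformly in $x$.

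\textbf{Step 3: the line $\mathrm{Re}\,s=1$, $s\neq1$ (main obstacle).} This is where the nonlattice hypothesis enters, and it is the step I expect to be hardest. I would show that $1\notin\mathrm{spec}(\mathcal L_{-(1+ib)\gamma})$ for $b\neq0$.
\begin{itemize}
\item A quasi-compactness (Lasota--Yorke/Ionescu-Tulcea--Marinescu) estimate shows the essential spectral radius is at most $\rho<1$. For the countable alphabet this needs the uniform summability from Definition \ref{1.2RegularPotential}.
\item If $\mathcal L_{-(1+ib)\gamma}g=g$ with $g\neq0$, the standard convexity argument gives $|g|=c\,h_{-\gamma}$. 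Writing $g=h_{-\gamma}e^{iu}$ then forces $b\gamma-u+u\circ\sigma\in2\pi\mathbb Z$.
\item That relation says $\gamma$ is cohomologous to a function with values in $\frac{2\pi}{b}\mathbb Z$, i.e.\ $\gamma$ is lattice. This contradicts Definition \ref{1.2LatticeDefinition}.
\end{itemize}
The delicate points are the continuity of $u$ on a noncompact $\Sigma$ and the boundedness needed to call the relation a cohomology. Both should follow from $g\in\mathcal F_\rho^b$ and $h_{-\gamma}\geq R$.

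\textbf{Step 4: Tauberian conclusion.} Steps 2 and 3 show that $\widehat N(s,x)-\frac{C_\gamma(x)}{s-1}$ extends continuously to $\mathrm{Re}\,s\ge1$, locally uniformly in $b=\mathrm{Im}\,s$ and uniformly in $x$. Since $t\mapsto N(t,x)$ is nondecreasing, the Wiener--Ikehara Tauberian theorem yields $e^{-t}N(t,x)\to C_\gamma(x)$. For uniformity in $x$, I would run the Tauberian argument with Fejér kernels as in Lalley. There the error terms are controlled by $\sup_x$ of the boundary values on compact $b$-intervals, which are uniform because all estimates are carried out in the $\mathcal F^b_\rho$ norm.
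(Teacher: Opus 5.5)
The paper gives no proof of this statement. It is quoted from \cite[Theorem 3.1]{InfiniteRenewal} and used as a black box, so your sketch reconstructs the cited result rather than offering an alternative to an argument in the paper. Your route is the standard Lalley-type one: the transform $\sum_n\mathcal L^n_{-s\gamma}1$ as a resolvent, a simple pole at $s=1$ with residue $h_{-\gamma}/\int\gamma\,d\mu_{-\gamma}$ from $\lambda'(1)=-\int\gamma\,d\mu_{-\gamma}$, exclusion of the eigenvalue $1$ on $\mathrm{Re}\,s=1$, $s\neq1$, by the nonlattice hypothesis, then Wiener--Ikehara. Its architecture is sound. What the citation buys the paper is the infinite-alphabet complex transfer-operator theory that your Step~3 treats as routine.

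Two points need repair before the sketch stands on its own.

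\emph{The reduction to positive $\gamma$.} Passing to a cohomologous function does not work as stated. If $\gamma=\tilde\gamma+u-u\circ\sigma$, then $S_n\gamma(y)=S_n\tilde\gamma(y)+u(y)-u(x)$, so $N(t,x)$ is not the renewal function of $\tilde\gamma$: the indicator picks up a $y$-dependent shift. Moreover, on an infinite alphabet the usual transfer function relating $\gamma$ to $\frac1N S_N\gamma$ is unbounded. The reduction is also unnecessary. Since $\int\gamma\,d\mu_{-\gamma}>0$, one has $P(-s\gamma)<0$ for real $s>1$ close to $1$. Hence $N(t,x)\le e^{st}\sum_n\mathcal L^n_{-s\gamma}1(x)<\infty$, and the transform argument runs without any sign assumption. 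In the paper's application $\gamma\ge-\log c_M>0$ anyway.

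\emph{Quasi-compactness for infinite $\mathcal I$.} The Ionescu-Tulcea--Marinescu/Hennion step cannot use $\|\cdot\|_\infty$ as the weak norm, and summability does not fix this. The functions $1_{[e]}$, $e\in\mathcal I$, are bounded in $\mathcal F^b_\rho(\Sigma)$ but lie pairwise at sup-distance $1$. No iterate helps either: $\mathcal L^n_{-\gamma}$ sends the indicator of $\{y_{n+1}=e\}$ to $1_{[e]}\mathcal L^n_{-\gamma}1$. The standard remedy, as for Gibbs--Markov maps, is to take the weak norm to be $L^1(\nu_{-\gamma})$.
\begin{itemize}
\item The unit ball of $\mathcal F^b_\rho$ is precompact in $L^1(\nu_{-\gamma})$, because finitely many $n$-cylinders carry almost all the mass.
\item For $\mathrm{Re}\,s=1$, the Gibbs property gives $\|\mathcal L^n_{-s\gamma}\varphi\|_\infty\le C\|\varphi\|_{L^1(\nu_{-\gamma})}+C\rho^n\|\varphi\|_\rho$.
\item Combined with your Lasota--Yorke bound for the H\"older seminorm, this yields the Hennion inequality on the line $\mathrm{Re}\,s=1$.
\end{itemize}
That is all Step~3 needs. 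Invertibility of $I-\mathcal L_{-s\gamma}$ then extends to a neighbourhood of the line by continuity of $s\mapsto\mathcal L_{-s\gamma}$ in operator norm.

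With these adjustments, and the bounded H\"older lift of $g/|g|$ on the zero-dimensional space $\Sigma$ that you already flagged, your sketch proves the theorem, including uniformity in $x$.
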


We will be especially interested in the case where we can extend this study to $\Sigma \cup \Sigma_*$. So first, we need to extend $N(\cdot, \cdot)$ to $\R \times (\Sigma \cup \Sigma_*)$.

Let $\gamma\colon \Sigma \cup \Sigma_* \to \R$. For $x\in \Sigma$, set $N_*(t,x) = N(t,x)$. For $x\in \Sigma_*$ and $t\in \R$ define 
$$N_*(t,x) := 1 \{0 \leq t\} + \sum_{n = 1}^\infty \sum_{\substack{\sigma_*^n y = x\\\sigma_*^{n - 1}y \neq \emptyset }} 1\{S_n \gamma(y) \leq t\}.$$

\begin{teo}\label{2RenewalTheoremNLFiniteWords}
Let $\gamma\colon \Sigma \cup \Sigma_* \to \R$ be such that $\gamma\in \mathcal{F}_\rho(\Sigma \cup \Sigma_*)$ and $\gamma|_\Sigma$ is a regular nonlattice potential. Then 
\begin{equation}
    \lim_{t \to \infty} e^{-t} N_*(t,x) = \frac{h_*(x)}{ \int \gamma \,d\mu_{-\gamma}} = C_\gamma(x),
\end{equation}
where $h_*$ is the unique positive continuous function satisfying $h_*|_\Sigma = h_{-\gamma}$ and for $x\in \Sigma_*$
\begin{equation}\label{2CondicionhEstrella}
    h_*(x) := \sum_{\substack{y : \sigma_* y= x\\ y \neq \emptyset}} e^{-\gamma(y)} h_*(y)
\end{equation}
\end{teo}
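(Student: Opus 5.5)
The plan is to reduce the statement for finite words to Theorem \ref{2RTNonLatticeNormalRenewalTheorem} on $\Sigma$ by a ``peeling'' argument. For $x\in\Sigma$ there is nothing to prove, since $N_*=N$ and $h_*=h_{-\gamma}$. So let $x\in\Sigma_*$ with $|x|=m$.

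\textbf{Unwinding the definition.} The conditions $\sigma_*^n y=x$ and $\sigma_*^{n-1}y\neq\emptyset$ say exactly that $y=wx$ with $|w|=n$; this also covers $x=\emptyset$. Hence
$$N_*(t,x)=1\{0\le t\}+\sum_{n\ge1}\sum_{|w|=n}1\{S_n\gamma(wx)\le t\}.$$

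\textbf{Existence of $h_*$.} Iterating \eqref{2CondicionhEstrella} $k$ times forces
$$h_*(x)=\sum_{|v|=k}e^{-S_k\gamma(vx)}h_*(vx).$$
I would define $h_*(x)$ as the limit, as $k\to\infty$, of $\sum_{|v|=k}e^{-S_k\gamma(vx)}h_{-\gamma}(z_v)$, where $z_v\in\Sigma$ is any point of the cylinder $[vx]$. The facts needed are:
\begin{itemize}
\item Bounded distortion: $|S_k\gamma(vx)-S_k\gamma(vz)|\le C\rho^{m}$ for $z\in[x]\cap\Sigma$, from Hölder continuity on $\Sigma\cup\Sigma_*$.
\item The Hölder continuity of $h_{-\gamma}$.
\item $\mathcal L_{-\gamma}h_{-\gamma}=h_{-\gamma}$, i.e. pressure zero.
\end{itemize}
Together these show the approximations form a Cauchy sequence, that the limit satisfies \eqref{2CondicionhEstrella}, and that it is positive and continuous. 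Uniqueness follows from the same distortion estimate, because any bounded positive solution must agree with this limit.

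\textbf{Main step.} Fix $k$ and split the sum over $n$ into $n<k$ and $n\ge k$.

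For $n\ge k$, write $w=uv$ with $|v|=k$. Then
$$S_n\gamma(uvx)=S_{|u|}\gamma(u(vx))+S_k\gamma(vx).$$
Comparing $u(vx)$ with $uz_v$, which agree on at least $|u|+k$ symbols, gives $|S_{|u|}\gamma(uvx)-S_{|u|}\gamma(uz_v)|\le C\rho^{k}$. Therefore the $n\ge k$ part is sandwiched between
$$\sum_{|v|=k}N\bigl(t-S_k\gamma(vx)\mp C\rho^k,\,z_v\bigr).$$
Dividing by $e^t$ and letting $t\to\infty$, Theorem \ref{2RTNonLatticeNormalRenewalTheorem} (uniform in $z$) gives the limits
$$e^{\pm C\rho^k}\frac{\sum_{|v|=k}e^{-S_k\gamma(vx)}h_{-\gamma}(z_v)}{\int\gamma\,d\mu_{-\gamma}}.$$

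\textbf{Obstacle 1: interchanging limit and sum.} When $\mathcal I$ is infinite, passing $t\to\infty$ inside the sum over $v$ needs dominated convergence. I would use the uniform bound $N(s,z)\le Ce^{s}$, which follows from the uniform convergence in Theorem \ref{2RTNonLatticeNormalRenewalTheorem} together with $N(s,z)=0$ for $s$ very negative (since $\gamma\ge-\log c_M>0$). This bounds each term by $Ce^{t}e^{-S_k\gamma(vx)}$. That bound is summable in $v$ because $-\gamma$ is summable and $P(-\gamma)=0$.

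\textbf{Obstacle 2: the terms with $n<k$.} In the countable case these finitely many levels each contain infinitely many words, so their contribution must be shown to be $o(e^t)$. I would use a Chernoff bound with $s<1$ close to $1$, which is allowed since $-s\gamma$ is summable near $1$:
$$\sum_{|w|=n}1\{S_n\gamma(wx)\le t\}\le e^{st}\sum_{|w|=n}e^{-sS_n\gamma(wx)}\le e^{st}\,C_{-s\gamma}^{\,n}.$$
For fixed $k$ this is $o(e^t)$.

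\textbf{Conclusion.} Combining the two parts and letting $k\to\infty$, the factor $e^{\pm C\rho^k}$ tends to $1$ and the sums over $v$ converge to $h_*(x)$ by the construction of $h_*$. This yields the stated limit $h_*(x)/\int\gamma\,d\mu_{-\gamma}$.
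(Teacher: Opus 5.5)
Your proposal is correct and is essentially the paper's argument. You iterate the renewal equation $k$ times, show the finitely many lower levels are $o(e^{t})$, sandwich the top level between $\sum_{|v|=k}N(t-S_k\gamma(vx)\mp C\rho^k,z_v)$ by H\"older distortion (the role of Lemma~\ref{2RenewalLemmaEpsilon}), apply the uniform nonlattice renewal theorem, and let $k\to\infty$ so that $\sum_{|v|=k}e^{-S_k\gamma(vx)}h_{-\gamma}(z_v)\to h_*(x)$.

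Your Cauchy construction of $h_*$ and the domination bound $N(s,z)\le Ce^{s}$ supply steps the paper only asserts. One remark needs fixing: bounded positive solutions of the defining relation for $h_*$ are not unique, because that relation only involves values on $\Sigma_*$. Uniqueness therefore requires the competing solution to be continuous at $\Sigma$; the paper's uniqueness lemma in fact assumes it lies in $\mathcal{F}_\rho^b$.
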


For the finite alphabet case, this theorem is just Theorem 4 in \cite{Lalley}. We will follow the same idea, adapted to an infinite alphabet. For the proof we will need the following results adapted to the infinite alphabet case

\begin{lem}
There is at most one nonnegative $h_*(x)\in \mathcal{F}_\rho^b(\Sigma \cup \Sigma_*)$ satisfying $h_*|_{\Sigma}= h_{-\gamma}$ and \eqref{2CondicionhEstrella}.
\end{lem}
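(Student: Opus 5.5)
The plan is to show that any two candidates $h_*$, $\tilde h_*$ coincide by taking their difference and using the structure of $\Sigma \cup \Sigma_*$. Set $g = h_* - \tilde h_*$. Then $g \in \mathcal{F}_\rho^b(\Sigma\cup\Sigma_*)$, $g|_\Sigma \equiv 0$, and for every finite word $x\in\Sigma_*$, $g$ satisfies the linear relation \eqref{2CondicionhEstrella}:
$$g(x) = \sum_{\substack{y:\sigma_* y = x\\ y\neq\emptyset}} e^{-\gamma(y)} g(y).$$
Since \eqref{2CondicionhEstrella} is linear, it suffices to prove $g\equiv 0$ on $\Sigma_*$. Nonnegativity of $h_*$ is not needed for this.

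First I record which preimages occur. For $x = (x_1,\dots,x_n)\in\Sigma_*$, the nonempty preimages under $\sigma_*$ are exactly the words $ix$ with $i\in\mathcal{I}$, all of length $n+1$. Iterating the relation $m$ times gives
$$g(x) = \sum_{w\in\Sigma_m} e^{-S_m\gamma(wx)}\, g(wx).$$
Here $S_m\gamma(wx)$ is the Birkhoff sum along the finite orbit $wx, \sigma_*(wx),\dots$, using $\gamma$ on $\Sigma\cup\Sigma_*$. This is the transfer operator $\mathcal{L}_{-\gamma}$ acting along the finite words.

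The key step is to show that $g(wx)$ is uniformly small when $|w|=m$ is large. The word $wx$, viewed as the sequence $(w,x,0,0,\dots)$, agrees in its first $m$ coordinates with every infinite sequence $wz$ with $z\in\Sigma$, and $g(wz)=0$. Hölder continuity of $g$ therefore gives
$$|g(wx)| = |g(wx)-g(wz)| \le \|g\|_\rho \rho^m.$$
Consequently,
$$|g(x)| \le \|g\|_\rho\,\rho^m \sum_{w\in\Sigma_m} e^{-S_m\gamma(wx)}.$$

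The main obstacle is bounding the last sum uniformly in $m$, which must be done for finite words rather than for points of $\Sigma$. I would compare $S_m\gamma(wx)$ with $S_m\gamma(wz)$ for a fixed $z\in\Sigma$. By Hölder continuity on $\Sigma\cup\Sigma_*$, the $k$-th terms differ by at most $\|\gamma\|_\rho\rho^{m-k}$, since $\sigma^k(wx)$ and $\sigma^k(wz)$ share $m-k$ initial symbols. Summing over $k$, the difference is bounded by $\|\gamma\|_\rho/(1-\rho)=:K$, a standard bounded-distortion estimate. Hence
$$\sum_{w\in\Sigma_m} e^{-S_m\gamma(wx)} \le e^{K}\mathcal{L}_{-\gamma}^m 1(z).$$
Since $P(-\gamma)=0$ and $-\gamma$ is summable, Theorem \ref{1RuellesTheorem}(iv) gives $\mathcal{L}_{-\gamma}^m 1(z) \to h_{-\gamma}(z)$, so $\mathcal{L}_{-\gamma}^m 1(z)$ is bounded uniformly in $m$. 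Alternatively, $h_{-\gamma}\ge R>0$ and $\mathcal{L}^m_{-\gamma}h_{-\gamma}=h_{-\gamma}$ yield the bound $\|h_{-\gamma}\|_\infty/R$.

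Combining these bounds, $|g(x)|\le C\rho^m$ for every $m$. Letting $m\to\infty$ gives $g(x)=0$ for all $x\in\Sigma_*$, which proves uniqueness.
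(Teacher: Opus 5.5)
Your proof is correct, and it takes a different route from the paper's.

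\textbf{The paper's route.} It first uses nonnegativity to show $h_*>0$ on $\Sigma_*$: on long words $h_*$ is within $O(\rho^{|v|})$ of $h_{-\gamma}\ge R$, and positivity then propagates back through \eqref{2CondicionhEstrella}. It then normalizes by $h_*$, defining the stochastic kernel $k(x,y)=e^{-\gamma(y)}h_*(y)/h_*(x)$. This writes $h_*'(x)/h_*(x)$ as a $k^n$-average of the ratios $h_*'/h_*$ on words of length $|x|+n$. Those ratios tend to $1$ uniformly by Hölder continuity, so the ratio at $x$ equals $1$.

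\textbf{Your route.} You argue linearly on the difference $g$. The uniform control on $\sum_{w\in\Sigma_m}e^{-S_m\gamma(wx)}$ comes from two ingredients: bounded distortion, which uses $\gamma\in\mathcal{F}_\rho(\Sigma\cup\Sigma_*)$, and the bound $\mathcal{L}^m_{-\gamma}1\le \|h_{-\gamma}\|_\infty/R$ on $\Sigma$.

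\textbf{What each buys.}
\begin{itemize}
\item The paper gets boundedness for free from $\sum_y k^n(x,y)=1$. Combined with $h_*\ge R/2$ on long words, this is exactly the estimate $\sum_{w\in\Sigma_m}e^{-S_m\gamma(wx)}\le 2h_*(x)/R$. It needs no regularity of $\gamma$ on finite words, but it does need the positivity step.
\item Your version dispenses with nonnegativity entirely, so it gives uniqueness among all bounded Hölder solutions. It also yields an explicit rate $|g(x)|\le C\rho^m$. The price is the Hölder hypothesis on $\gamma$ over $\Sigma\cup\Sigma_*$, which is available in Theorem \ref{2RenewalTheoremNLFiniteWords}.
\end{itemize}

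\textbf{Two small points.}
\begin{itemize}
\item In the paper, $\|\cdot\|_\rho$ is the Hölder seminorm $\sup_n \mathrm{var}_n/\rho^n$. Read literally, item (iv) of Theorem \ref{1RuellesTheorem} therefore does not control the sup norm. Rely on your second justification, via $1\le h_{-\gamma}/R$, positivity of $\mathcal{L}_{-\gamma}$, and $\mathcal{L}^m_{-\gamma}h_{-\gamma}=h_{-\gamma}$ (since $P(-\gamma)=0$).
\item For countable $\mathcal{I}$, establish the distortion bound before iterating the relation. That bound is what gives the absolute convergence needed to merge the nested sums into a single sum over $\Sigma_m$.
\end{itemize}
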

\begin{proof}
Note that any $h_*$ satisfying $h_*|_{\Sigma} = h_{-\gamma}$ and \eqref{2CondicionhEstrella} must be strictly positive.

For $v\in \Sigma_*$ and $\underline{v}\in [v]$. Then 
$$|h_*(v) - h_{-\gamma}({\underline{v}})|\leq \rho^{|v|} |h_*|_\rho.$$
Thus, $h_*(v) \geq R/2$ for all sufficiently long finite words $v$. Iterating \eqref{2CondicionhEstrella}, we get that $h_*$ is strictly positive on $\Sigma_*$.

For $x,y\in \Sigma_*$ and $y \neq \emptyset$, define 
$$k(x,y) := \begin{cases}
    e^{-\gamma(y)} \cdot (h_*(y) / h_*(x)) &\text{ if } \sigma_* y = x,\\
    0, &\text{otherwise.}
\end{cases}$$
Then for each $x\in \Sigma_*$, we have $\sum_{y: \sigma_* y = x} k(x,y) = 1$. Define 
$$k^n(x, y) = \sum_{(y_1, \ldots, y_{n - 1})} k(x, y_1)k(y_1, y_2)\cdots k(y_{n - 1}, y).$$
By induction, $\sum_{y :\sigma_*^n y=  x} k^n(x,y) = 1$. Now suppose that $h'_*$ is another nonnegative continuous function satisfying \eqref{2CondicionhEstrella} and $h_*' |_{\Sigma} = h_{-\gamma}$. Then 
$$\frac{h_*'(x)}{h_*(x)} = \sum_{y: \sigma_* y = x} k(x,y) \left(\frac{h_*'(y)}{h_*(y)} \right) = \sum_{y: \sigma_*^n y = x} k^n(x,y) \left(\frac{h_*'(y)}{h_*(y)} \right).$$
Since $h_*, h_*' \in \mathcal{F}_\rho^b(\Sigma \cup \Sigma_*)$ and both restrict to $h_{-\gamma}$ on $\Sigma$. We have the following estimate. For $v\in \Sigma_*$ and $\underline{v}\in [v]$. Then 
$$|h_*(v) - h_{-\gamma}({\underline{v}})|\leq \rho^{|v|} |h_*|_\rho.$$
Therefore 
$$|h_*'(v) - h_*(v)| \leq (|h_*|_\rho + |h_*'|_\rho) \rho^{|v|}.$$
Hence 
$$\sup_{\substack{v\in\Sigma_*\\ |v|\geq m}}
\left|
\frac{h_*'(v)}{h_*(v)}-1
\right|
\longrightarrow 0
\quad\text{as }m\to\infty.$$

Finally, fix $x\in \Sigma_*$. If $\sigma_*^n y = x$, then 
$$\frac{h_*'(y)}{h_*(y)} \to 1,$$
uniformly over all $y$ with $\sigma_*^n y = x$. Then 
$$\frac{h_*'(x)}{h_*(x)} = \sum_{\sigma_*^n y = x} k^n(x,y) \frac{h_*'(y)}{h_*(y)}  \to 1.$$
Since $x$ is fixed, this would imply that the quotient is $1$, and thus, both functions are equal.

Since $x$ was arbitrary, the uniqueness of $h_*$ follows.  
\end{proof}

\begin{lem}\label{2RenewalLemmaEpsilon}
For each $\varepsilon > 0$ there exists $n_\varepsilon$ sufficiently large that if $x,x' \in \Sigma \cup \Sigma_*$ satisfy $x_i = x_i'$ for $i = 1, \ldots, n_\varepsilon$, then
\begin{equation}
    N_*(t,x) \leq N_*(t + \varepsilon, x'),
\end{equation}
for all $t\in \R$.
\end{lem}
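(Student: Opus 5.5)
The plan is to build a bijection between the preimage trees of $x$ and $x'$ that preserves word length, and then to control how much the Birkhoff sums can differ under that bijection. The bijection is the obvious one: for every $n$, send $y=wx$ (the word $w$ of length $n$ concatenated with $x$) to $y'=wx'$. This pairs the preimages $\{y:\sigma_*^n y=x\}$ with $\{y':\sigma_*^n y'=x'\}$. It respects the side condition $\sigma_*^{n-1}y\neq\emptyset$ in the definition of $N_*$ whenever $x$ and $x'$ are both in $\Sigma_*$ or both in $\Sigma$.

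The first step is to reduce to the case where $x$ and $x'$ lie in the same piece. I take $n_\varepsilon\ge 1$. If $x\in\Sigma_*$ has length smaller than $n_\varepsilon$, then agreement in the first $n_\varepsilon$ coordinates of $(\mathcal I\cup\{0\})^\N$ forces $x'=x$, and the inequality is trivial because $N_*$ is nondecreasing in $t$. The same argument applies with the roles of $x$ and $x'$ exchanged. So either both points lie in $\Sigma$, or $x\in\Sigma$ and $x'\in\Sigma_*$ is a long word, or both are long words. In every one of these cases the pairing $y\mapsto y'$ above is well defined. The term $1\{0\le t\}$ for finite words corresponds to $n=0$, and it is matched with the $n=0$ term $1\{0\le t\}$ of $N(t,x)$.

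The second step is the distortion estimate, which I expect to be the main point. For $y=wx$ and $y'=wx'$ with $|w|=n$, the points $\sigma^j y$ and $\sigma^j y'$ agree in their first $n-j+n_\varepsilon$ coordinates. Since $\gamma\in\mathcal F_\rho(\Sigma\cup\Sigma_*)$, this gives
\[
|S_n\gamma(y)-S_n\gamma(y')|\le \sum_{j=0}^{n-1}\|\gamma\|_\rho\,\rho^{\,n-j+n_\varepsilon}\le \frac{\|\gamma\|_\rho\,\rho^{n_\varepsilon+1}}{1-\rho}.
\]
This bound is uniform in $n$, in $w$, and in the alphabet, so countability of $\mathcal I$ causes no trouble. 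The Hölder seminorm on the extended space controls exactly these variations, including at the points where an infinite sequence is compared with a finite word followed by $0$'s. I choose $n_\varepsilon$ so that the right-hand side is at most $\varepsilon$.

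The conclusion then follows termwise. If $S_n\gamma(y)\le t$, then $S_n\gamma(y')\le t+\varepsilon$. Summing over the bijection gives $N_*(t,x)\le N_*(t+\varepsilon,x')$ for every $t$. The only delicate bookkeeping is in the mixed case $x\in\Sigma$, $x'\in\Sigma_*$. There one has to check that the admissibility constraint $\sigma_*^{n-1}y'\neq\emptyset$ is automatic, because $|x'|\ge n_\varepsilon\ge1$, so the bijection really lands in the index set of $N_*(\cdot,x')$.
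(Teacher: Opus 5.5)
Your proposal is correct and follows the paper's proof: you pair each $n$-th preimage $wx$ with $wx'$, bound $|S_n\gamma(wx)-S_n\gamma(wx')|$ by the tail of the summable variations of the Hölder potential (which you make explicit as a geometric series), and sum the resulting indicator inequalities. Your additional bookkeeping goes beyond what the paper writes out. It covers short words forcing $x'=x$, the matching $n=0$ terms, and the side condition $\sigma_*^{n-1}y\neq\emptyset$ in the mixed finite/infinite cases, all of which the paper leaves implicit.
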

\begin{proof}
Since $\gamma$ is Hölder, its variations are summable. Take $n_\varepsilon$ big enough so that 
$$\sum_{m \geq n_\varepsilon} \text{var}_m(\gamma) < \varepsilon.$$
Suppose that $x_i = x_i '$ for $i = 1, \ldots, n_\varepsilon$. If $y$ is an $n$-th preimage of $x$, and $y'$ an $n$-th preimage of $x'$ with the same first $n$ coordinates equal, then 
$$|S_n \gamma(y) - S_n\gamma(y') | \leq \sum_{m = n_\varepsilon}^\infty \text{var}_m (\gamma) \leq \varepsilon.$$
Therefore 
$$S_n\gamma(y) \leq t \implies S_n\gamma(y') \leq t + \varepsilon.$$
Then, summing over all $n$ and over all $n$-preimages, we obtain 
$$N_*(t, x) \leq N_*(t + \varepsilon, x').$$
\end{proof}

\begin{proof}[Proof of Theorem \ref{2RenewalTheoremNLFiniteWords}]
Iterating the renewal equation $n$ times, we get
\begin{equation}\label{2RenewalExpanded}
    N_*(t,x) = \sum_{\substack{y: \sigma_*^n y = x\\ \sigma_*^{n - 1}y \neq \emptyset}} N_*(t - S_n \gamma(y), y) + \sum_{j = 1}^{n - 1} \sum_{\substack{y : \sigma_*^j y = x\\ \sigma_*^{j - 1} y\neq \emptyset}} 1\{t - S_j\gamma(y) \geq 0\} + 1\{t \geq 0\}.
\end{equation}
Note that the summability condition ensures that both terms are finite. We first show that, for fixed $n$,
\begin{equation}\label{2RTRestosA0Fixing}
    \lim_{t\to\infty}e^{-t}\left(\sum_{j = 1}^{n - 1} \sum_{\substack{y : \sigma_*^j y = x\\ \sigma_*^{j - 1} y\neq \emptyset}} 1\{t - S_j\gamma(y) \geq 0\} + 1\{t \geq 0\}\right) = 0,
\end{equation}
as $t\to \infty$.
Fix $j\in \{1, \ldots, n - 1\}$. We have that 
$$e^{-t}\sum_{\substack{y : \sigma_*^j y = x\\ \sigma_*^{j - 1} y\neq \emptyset}} 1\{t - S_j\gamma(y) \geq 0\} = \sum_{\substack{y : \sigma_*^j y = x\\ \sigma_*^{j - 1} y\neq \emptyset}} e^{-t}1\{t - S_j\gamma(y) \geq 0\}.$$
Note that 
$$e^{-t}1\{t - S_j\gamma(y) \geq 0\} \leq e^{-S_j \gamma(y)},$$
by the summability condition, we get that 
$$\sum_{\substack{y : \sigma_*^j y = x\\ \sigma_*^{j - 1} y\neq \emptyset}}e^{-S_j \gamma(y)} = \|\mathcal{L}_{-\gamma}^j 1\|_\infty < Q.$$
Then, by Dominated Convergence Theorem, we get
$$\lim_{t\to\infty} e^{-t}\sum_{\substack{y : \sigma^j y = x\\ \sigma^{j - 1} y\neq \emptyset}} 1\{t - S_j\gamma(y) \geq 0\} = 0.$$
This proves \eqref{2RTRestosA0Fixing}. Thus the asymptotic behaviour of $N_*(t, x)$ is completely determined by 
$$\sum_{\substack{y: \sigma_*^n y = x\\ \sigma_*^{n - 1}y \neq \emptyset}} N_*(t - S_n \gamma(y), y).$$
Each $y$ in the preceding expression is a sequence of length at least $n$. For each $y$ there exists $y' \in \Sigma$ such that $y_i ' = y_i$ for $i = 0, 1, \ldots, n- 1$. By Lemma \ref{2RenewalLemmaEpsilon}, if $n\geq n_\varepsilon$ then 
$$N(t- \varepsilon - S_n \gamma(y), y') \leq N_*(t - S_n\gamma(y), y) \leq N(t + \varepsilon - S_n\gamma(y), y').$$
Since Theorem \ref{2RTNonLatticeNormalRenewalTheorem} holds uniformly for $y\in \Sigma$, using the summability condition on $\gamma$ we get
$$N(t \pm \varepsilon - S_n\gamma(y), y') \sim C(y') e^{t - S_n\gamma(y)} e^{\pm \varepsilon},$$
as $t\to\infty $. 
Thus,
$$\lim_{t\to\infty} e^{-t} N_*(t,x) = \frac{1}{\int \gamma\,d\mu_{- \gamma}} \sum_{\sigma_*^n y = x} e^{-S_n\gamma(y)} h_{-\gamma}(y').$$
Letting $\varepsilon \to 0$ and $n\to\infty$ we conclude that the limit of the sum is the same as 
$$\sum_{\sigma_*^n y = x} e^{-S_n\gamma}(y) h_{*}(y) = h_*(x).$$
The continuity of $C(x)$ follows from Lemma \ref{2RenewalLemmaEpsilon}.
\end{proof}

%\section{Renewal Theory}
%\input{2RenewalTheory}

\section{Proof of Main Theorem}\label{SectionProof}
\subsection{Reduction to cylinders}

Let $\mathcal{P}$ be the partition generated by the transformations $\{T_i\}_{i\in \mathcal{I}}$, where $\mathcal{I}$ is at most countable and the transformations satisfy the hypotheses of Theorem \ref{1MainTheo}.

As we mentioned before, we are going to follow the approach of Pollicott and Sewell in \cite{PollicottSewell}. It will be enough to prove the following Lemma.

\begin{lem}\label{2MainLem}
For all words $v\in \Sigma_*$,   
$$\lim_{\lambda \to 0^+} \mu_\lambda (T_v[0,1)) = \leb(T_v[0,1)).$$
\end{lem}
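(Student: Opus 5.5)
The plan is to reduce the statement to a renewal-type count of words with a fixed prefix, and to obtain that count from Theorem \ref{2RenewalTheoremNLFiniteWords} together with a self-consistency argument.

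\emph{Step 1 (symbolic description).} Points of $X_\lambda$ lying in $T_v[0,1)$ are, up to boundedly many exceptions, the points $T_{vu}(0)$ with $vu\in A_\lambda$. The exceptions are left endpoints $T_w(0)$ with $w$ a prefix of $v$ that coincide with $T_v(0)$. The only other coincidences in $X_\lambda$ come from the identification $T_w(0)=T_{wi_0}(0)$ when some branch $T_{i_0}$ fixes $0$. These are handled by restricting to words not ending in $i_0$; this restriction changes numerator and denominator by the same asymptotic factor, obtained from the same renewal count applied to the subfamily. So it suffices to study
$$F_\lambda(v)=\#\{u\in\Sigma_*: \alpha_{vu}\ge\lambda\}\quad\text{and}\quad \#A_\lambda=F_\lambda(\emptyset).$$
The goal is to show $\lambda F_\lambda(v)\to C\,\alpha_v$ with $C=C_\gamma(\emptyset)$ independent of $v$, which gives the lemma.

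\emph{Step 2 (renewal asymptotics for $\#A_\lambda$).} I will extend $\gamma$ to $\Sigma\cup\Sigma_*$ by setting $\gamma(u)=-\log(\alpha_u/\alpha_{\sigma_* u})$ on finite words, so that $S_{|u|}\gamma(u)=-\log\alpha_u$ exactly. Since $\alpha_u=\int_0^1 T_u'$ and $T_u'(x)=\exp(-S_n\gamma(u\bar x))$, where $\bar x$ is the coding of $x$, bounded distortion (from $T_i^{-1}\in\mathcal C^{1+\varepsilon}$ and uniform contraction) shows that this extension lies in $\mathcal F_\rho(\Sigma\cup\Sigma_*)$ for suitable $\rho$ and agrees asymptotically with $\gamma|_\Sigma$ on long words. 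Then $\#A_\lambda=N_*(\log(1/\lambda),\emptyset)$. Since $\gamma|_\Sigma$ is nonlattice and regular, Theorem \ref{2RenewalTheoremNLFiniteWords} gives $\lambda\#A_\lambda\to C_\gamma(\emptyset)>0$. More generally, $N_*$ at a finite word $x$ counts words $ux$ with a prescribed \emph{suffix}.

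\emph{Step 3 (prefix counts).} The prefix $v$ is on the wrong side for $N_*$, so I use the chain rule $\alpha_{vu}=\alpha_u\cdot\frac{1}{\alpha_u}\int_{T_u[0,1]}T_v'$. By uniform contraction and Hölder continuity of $\log T_v'$, this equals $\alpha_u\,T_v'(T_u(0))\,(1+O(c_M^{\varepsilon|u|}))$, and words with $|u|$ bounded contribute $o(1/\lambda)$. Partition $[0,1]$ into small intervals $J_k$ on which $T_v'$ lies within a factor $e^{\pm\delta}$ of a constant $c_k$. Then, up to the factors $e^{\pm\delta}$,
$$F_\lambda(v)\approx\sum_k\#\{u\in A_{\lambda/c_k}: T_u(0)\in J_k\}.$$
Writing $M_\lambda=\lambda\sum_{u\in A_\lambda}\delta_{T_u(0)}$, this becomes $\lambda F_\lambda(v)\approx\sum_k c_k M_{\lambda/c_k}(J_k)$. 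If $M_\lambda\to m$ weakly, then $\lambda F_\lambda(v)\to\int_0^1 T_v'\,dm$. Taking $v$ to be all words shows $m(T_v[0,1))=\int T_v'\,dm$, so $m$ is a multiple of a measure satisfying the conformality relation $m\circ T_v=T_v'\,m$. Since cylinders of arbitrarily long words generate, and bounded distortion forces $m(T_v[0,1))\asymp\alpha_v$ uniformly, this measure is $C\cdot\leb$ by a standard uniqueness argument for conformal measures. Then $\mu_\lambda(T_v[0,1))\to\alpha_v$.

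\emph{Main obstacle.} The hard part is the existence of the limit of $M_\lambda$, not merely of subsequential limits. The scaling $\lambda\mapsto\lambda/c_k$ mixes subsequences, so a compactness argument alone does not close. I would first try to obtain it from Theorem \ref{2RenewalTheoremNLFiniteWords} directly. For a cylinder $J=T_w[0,1)$, the condition $T_u(0)\in J$ essentially means $u=wu'$, again a prefix condition. Instead, I would approximate $J$ by unions of sets $\{u:\ u$ ends in a long suffix $s$ with $T_u(0)$ determined up to $c_M^{|s|}\}$. Since $T_u(0)$ depends continuously on the reversed coding, such suffix classes are exactly what $N_*(t,s)$ counts. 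Passing to the reversed (adjoint) system, where prefixes become suffixes, the weak limit of $M_\lambda$ is identified as $C_\gamma$ integrated against the image of the relevant Gibbs data. Uniformity of the convergence in Theorem \ref{2RTNonLatticeNormalRenewalTheorem}, together with Lemma \ref{2RenewalLemmaEpsilon}, should control the $\varepsilon$-errors. The infinite-alphabet tails are controlled by summability of $-t\gamma$ near $t=1$. Once existence is established, the identification with Lebesgue in Step 3 finishes the proof.
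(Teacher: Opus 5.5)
\textbf{There is a genuine gap in Step 3.} Your Steps 1--2 agree with the paper, but Step 3 does not work as proposed.

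\emph{Why Step 3 is circular.} As you partly notice, for $J=T_w[0,1)$ one has $M_\lambda(J)=\lambda F_\lambda(w)+O(\lambda)$. So the existence of $\lim M_\lambda$ on cylinders is exactly the prefix asymptotics you set out to prove.

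\emph{Why the suffix escape fails.} The proposed way out via suffix classes rests on a false premise. The point $T_u(0)=T_{u_1}\circ\cdots\circ T_{u_n}(0)$ lies in $T_{u_1\cdots u_k}[0,1]$ for every $k$. Its position is therefore controlled by the \emph{prefix} of $u$: changing $u_1$ moves it to a different first-level interval. Knowing a long suffix $s$ of $u=u's$ does not localize $T_u(0)=T_{u'}(T_s(0))$ at all. Hence the words counted by $N_*(t,s)$ have left endpoints spread over all of $[0,1]$, and they cannot approximate $\{u:\ T_u(0)\in J\}$. The remark about a ``reversed system'' would need a separate two-sided construction that you do not carry out. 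The final identification of $m$ from $m(T_v[0,1))=\int T_v'\,dm$ is also only sketched, but that is secondary.

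\emph{The missing idea.} The prefix can be absorbed into a cohomologous potential, so no weak limit is ever needed. Your approximation $\alpha_{vu}\approx\alpha_u T_v'(T_u(0))$ is in fact the exact identity $\alpha_{vu}=\alpha_u\varphi_v(u)$, where $\varphi_v(x)=\prod_{j<|v|}\beta(\sigma_*^j vx)$. This function satisfies $\varphi_v(\emptyset)=\alpha_v$ and $\varphi_v|_\Sigma=T_v'\circ\pi$.

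\emph{How the paper uses it.}
\begin{enumerate}
\item Put $\beta_v(x_1\cdots x_n)=\alpha_{vx_1\cdots x_n}/\alpha_{vx_2\cdots x_n}$ and $\gamma_v=-\log\beta_v=\gamma-\log\varphi_v+\log\varphi_v\circ\sigma_*$.
\item Then $\prod_{j<|u|}\beta_v(\sigma_*^j u)=\alpha_{vu}/\alpha_v$ exactly. Hence $F_\lambda(v)=\#A^v_\lambda=\widetilde N_v(\lambda/\alpha_v,\emptyset)$, which is a renewal count for $\gamma_v$ at the empty word, with no prefix problem left.
\item Since $\gamma_v$ differs from $\gamma$ by a bounded H\"older coboundary, it is again regular and nonlattice. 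So Theorem \ref{2RenewalTheoremNLFiniteWords} applies to $\gamma_v$ directly.
\item The transformation rules $h_{-\gamma_v}=(h_{-\gamma}/\varphi_v)\int\varphi_v\,d\nu_{-\gamma}$ and $\int\gamma_v\,d\mu=\int\gamma\,d\mu$ then give $\lambda F_\lambda(v)\to C_\gamma(\emptyset)\int T_v'\circ\pi\,d\nu_{-\gamma}$.
\item The same computation for words ending in the symbol that fixes $0$ produces the same factor $\int T_v'\circ\pi\,d\nu_{-\gamma}$. This is what makes your Step 1 ``same asymptotic factor'' claim precise.
\item Finally, $\pi_*\nu_{-\gamma}=\leb$ by uniqueness of the conformal measure for $-\log f'$. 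So the limit is $\int_0^1 T_v'\,d\leb=\alpha_v$.
\end{enumerate}
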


We will prove this lemma at the end of the section.

\begin{proof}[Proof of Theorem \ref{1MainTheo}]
Let $c_M \in (0,1)$ be a fixed constant such that for all $i\in \mathcal{I}$ and all $x\in (0,1)$ we have
$$T_i'(x) < c_M < 1.$$
Let $v\in \Sigma_*$ be an arbitrary nonempty word. By the mean value theorem, we have that 
$$\leb(T_v([0,1])) = T_v(1) - T_v(0) = T_v'(y),$$
for some $y\in [0,1]$. Using the chain rule, we get that 
$$\leb(T_v([0,1])) \leq c_M^{|v|}.$$

Now the proof works like the one in \cite{PollicottSewell}. Let $I\subset [0,1]$ be an interval. Fix $k \in \N$, let 
$$\mathcal{U}_k:= \{T_v[0,1):  v\in \Sigma_k, \, T_v[0,1) \subset I\}.$$
Since $\{T_i[0,1)\}$ are pairwise disjoint, it follows inductively that the intervals in $\mathcal{U}_k$ are pairwise disjoint and contained in $I$. Therefore,
$$\sum_{U \in \mathcal{U}_k} \mu_{\lambda}(U) \leq \mu_\lambda(I).$$
Moreover, by Lemma \ref{2MainLem}, for every $U \in \mathcal{U}_k$, 
$$\lim_{\lambda\to 0^+} \mu)\lambda(U) = \leb(U).$$
Therefore, by Fatou's Lemma 
$$\liminf_{\lambda\to0^+} \mu_\lambda(I) \geq \sum_{U \in \mathcal{U}_k} \leb(U) \geq \leb(I) - 2c_M^k.$$

We now show that 
\begin{equation}\label{5ProofOfLebesgue}
    \sum_{U\in \mathcal{U}_k} \leb(U) \geq \leb(I) - 2c_M^k.
\end{equation}

Let $x\in I\setminus \bigcup_{U\in \mathcal{U}_k}U$. Then one of the following cases holds:
\begin{itemize}
    \item[Case 1:] $x\in K_k:= [0,1] \setminus \bigcup_{v\in \Sigma_k} T_v[0,1)$. But we know that $\leb(K_k) = 0$ for all $k$.
    \item[Case 2:] there exists $v\in \Sigma_k$ such that $x\in T_v[0,1) \nsubseteq I$. So $T_v[0,1)$ must meet an endpoint of $I$. Hence, there are at most two $v\in \Sigma_k$ with this property, and $x$ is contained in the union of at most two intervals, each with length at most $c_M^k$. This proves \eqref{5ProofOfLebesgue}.
\end{itemize}

Since $\mu_{\lambda}(U) \to \leb(U)$, for each $U \in \mathcal{U}_k$, then by Fatou's Lemma 
$$\liminf_{\lambda \to 0^+} \mu_\lambda(I) \geq \lim_{\lambda\to 0^+} \sum_{U \in \mathcal{U}_k} \mu_\lambda(U) = \sum_{U\in \mathcal{U}_k} \leb(U) \geq \leb(I) - 2c_{M}^k.$$

Applying the same argument to the connected components of $[0,1] \setminus I$ gives 
$$\liminf_{\lambda\to0^+}\mu_\lambda([0,1]\setminus I) \geq \leb([0,1]\setminus I)-4c_M^k.$$
Since $(\mu_\lambda)$ and Lebesgue measure are probability measures, this implies
$$\limsup_{\lambda \to 0^+} \mu_\lambda(I) \leq \leb(I) + 4c_M^k.$$
Taking $k\to\infty$, we finish the proof of the equidistribution of the sequence $\{L_n\}_n$.
\end{proof}

\subsection{Endpoints and words}
The objective of this subsection is to prove Lemma \ref{2MainLem}. Throughout the subsection, we fix a nonempty word $v \in \Sigma_*$. We first relate the function $\# X_\lambda$ to $\#A_\lambda$ and then obtain an analogous relation between the set $X_\lambda \cap T_v[0,1)$ and a specific subset of $A_\lambda$.

The following result follows from \cite[Lemma 1]{PollicottSewell}. %After relabelling the alphabet if necessary, let $1\in \mathcal{I}$ be the unique symbol such that $T_1(0) = 0$.

\begin{lem}\label{2XLambdaCardinality}
For every $\lambda > 0$, either there is a unique symbol $1\in \mathcal{I}$ such that $T_1(0) = 0$, in which case
\begin{equation}
\#X_\lambda = \# A_\lambda - \#\{u\in A_\lambda: u1\in A_\lambda\};  
\end{equation}
or no element of $\{T_v\}_{v\in \Sigma_*\setminus \{\emptyset\}}$ fixes $0$, and $\#X_\lambda = \# A_\lambda$.
\end{lem}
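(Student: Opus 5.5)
The plan is to study the map $\Phi\colon A_\lambda \to X_\lambda$, $u\mapsto T_u(0)$, which is onto by the description $X_\lambda=\{T_u(0):u\in A_\lambda\}$, and to count its fibres exactly. Everything reduces to deciding when two words $u\neq w$ in $A_\lambda$ give $T_u(0)=T_w(0)$.

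\emph{Step 1 (where coincidences come from).} We use the disjointness of the half-open intervals $T_i[0,1)$. Suppose $T_u(0)=T_w(0)$ with $u\neq w$.

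First suppose neither word is a prefix of the other. Let $p$ be their longest common prefix, so $u=pau'$ and $w=pbw'$ with $a\neq b$. Since $T_p$ is injective, $T_{au'}(0)=T_{bw'}(0)$. This point lies in $T_a[0,1)$, because $T_{u'}(0)\in[0,1)$ (it is a left endpoint of an interval of positive length, and for $u'=\emptyset$ it equals $0$). For the same reason it lies in $T_b[0,1)$. These sets are disjoint, a contradiction.

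So one word is a prefix of the other, say $w=uz$ with $z\neq\emptyset$. Injectivity of $T_u$ then gives $T_z(0)=0$. Writing $z=z_1z'$, we have $T_{z_1}(T_{z'}(0))=0$. Since $T_{z_1}$ is increasing and $T_{z_1}[0,1]\subset[0,1]$, this forces $T_{z_1}(0)=0$ and $T_{z'}(0)=0$. Inductively every letter of $z$ is a symbol $i$ with $T_i(0)=0$.

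At most one symbol has $T_i(0)=0$, since only one interval $T_i[0,1)$ can contain $0$. Hence either no nonempty $T_v$ fixes $0$ and $\Phi$ is injective, giving $\#X_\lambda=\#A_\lambda$; or there is a unique such symbol, call it $1$, and $T_u(0)=T_w(0)$ holds exactly when $w=u1^k$ or $u=w1^k$ for some $k\geq0$.

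\emph{Step 2 (counting fibres in the second case).} In this case the fibres of $\Phi$ are the chains $\{u_0,u_01,u_011,\dots\}\cap A_\lambda$, where $u_0$ does not end in $1$. We use that $A_\lambda$ is closed under taking prefixes, since $\alpha_{uj}\leq\alpha_u$ because $T_{uj}[0,1)\subset T_u[0,1)$. So each fibre is an initial segment $\{u_0,\dots,u_01^m\}$, and it is finite because $\alpha_{u_01^k}\leq c_M^{k}\to0$. Each fibre contains exactly one word $u$ with $u\in A_\lambda$ and $u1\notin A_\lambda$, namely its last element. Therefore
\[
\#X_\lambda=\#\{u\in A_\lambda: u1\notin A_\lambda\}=\#A_\lambda-\#\{u\in A_\lambda:u1\in A_\lambda\}.
\]
All these sets are finite, because $\alpha_u\leq c_M^{|u|}$ bounds word lengths and summability bounds how many words at each length have $\alpha_u\geq\lambda$.

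The main obstacle is the boundary bookkeeping in Step 1: we need $T_{u'}(0)\in[0,1)$, and we need the convention for the empty word and for points on shared endpoints to be consistent with the half-open intervals in Definition~\ref{1.1DefPartitionGeneratedbyTi}. The rest is elementary.
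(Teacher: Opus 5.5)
Your proposal is correct and follows essentially the same route as the paper's proof. Both use the map $u\mapsto T_u(0)$, show via disjointness of the $T_i[0,1)$ that two words give the same point only when one is the other with $1^k$ appended, and count one representative per class. You supply details the paper leaves implicit: the longest-common-prefix argument, the letter-by-letter reason each letter of $z$ must fix $0$, prefix-closure of $A_\lambda$, and finiteness. Counting each class by its last element ($u\in A_\lambda$, $u1\notin A_\lambda$) also matches the displayed formula more directly than the paper's set $R_\lambda$ of words not ending in $1$.
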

\begin{proof}
First, suppose there is a symbol $1\in \mathcal{I}$ such that $T_1 (0) = 0$. Define
$$R_\lambda  = A_\lambda\setminus \{v1: v,v1\in A_\lambda \}.$$
We claim that the map $R_\lambda \to X_\lambda$ given by $u\mapsto T_u(0)$, is a bijection.

Let $u,w \in A_\lambda$ be such that $T_u (0) = T_w(0)$. By disjointness of the half-open intervals $T_i[0,1)$, one sees that one of the words must be an extension of the other. Without loss of generality, assume $w$ is an extension of $u$. Then, there exists $z\in \Sigma_*$ such that $w = uz$. Since $T_u$ is injective, this implies that $T_z(0) = 0$. By the uniqueness of the symbol 1, it follows by induction that $z = 1^j$ for some $j \geq 0$.

Thus, two words in $A_\lambda$ define the same point if and only if they differ by appending a finite string of $1$'s. Since every equivalence class has a unique representative $u\in A_\lambda$ such that $u1\notin A_\lambda$. These are exactly the elements of $R_\lambda$. Therefore the map $ u\mapsto T_u(0)$ is a bijection from $R_\lambda \to X_\lambda$. Hence 
$$\#X_\lambda = \#R_\lambda = \#A_\lambda - \#\{u \in A_\lambda: u1 \in A_\lambda\}.$$

On the other hand, if no word fixes zero, then the map $v\mapsto T_v(0)$ is clearly a bijection. Thus $\#X_\lambda = \# A_\lambda$.
\end{proof}

Let 
$$A^v_\lambda = \{u: vu \in A_\lambda\}.$$

\begin{lem}\label{3.2MedidaCilindro}
Let $v\in \Sigma_*$ and $0 < \lambda \leq \alpha_v$. Either there is a unique symbol $1\in \mathcal{I}$ such that $T_1(0) = 0$, in which case
\begin{equation}
    \#(X_{\lambda} \cap T_v[0,1)) =\# A^v_\lambda - \#\{u: u\in A^v_\lambda \text{ and } u1\in A^v_\lambda\}.
\end{equation}
Equivalently,
$$\#(X_{\lambda} \cap T_v[0,1)) =\# A^v_\lambda - \#\{u: u\in A^v_\lambda \text{ and } vu1\in A_\lambda\};$$
or no element of $\{T_v\}_{v\in \Sigma_*\setminus \{\emptyset\}}$ fixes $0$, and 
$$\#(X_{\lambda} \cap T_v[0,1)) =\# A^v_\lambda .$$
\end{lem}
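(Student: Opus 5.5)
The plan is to localize the bijection argument from Lemma \ref{2XLambdaCardinality} to the cylinder interval $T_v[0,1)$. The first step is to identify which points of $X_\lambda$ lie in $T_v[0,1)$. Recall that $X_\lambda=\{T_w(0): w\in A_\lambda\}$. I claim that for $w\in A_\lambda$ we have $T_w(0)\in T_v[0,1)$ if and only if, after possibly replacing $w$ by an equivalent word producing the same point, $w$ is an extension $vu$ of $v$.

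To see this, use the disjointness of the half-open intervals $T_i[0,1)$ inductively. For words $w$ and $v$, the intervals $T_w[0,1)$ and $T_v[0,1)$ are either nested or disjoint, and they are nested exactly when one word is a prefix of the other. Since $T_w(0)\in T_w[0,1)$, there are two cases.
\begin{itemize}
\item If $w=vu$, then clearly $T_w(0)=T_v(T_u(0))\in T_v[0,1)$.
\item If $w$ is a proper prefix of $v$, say $v=wz$ with $z$ nonempty, then $T_w(0)\in T_v[0,1)\subset T_w[0,1)$ forces $T_w(0)=T_w(T_z(y))$ for some $y\in[0,1)$. Injectivity gives $T_z(y)=0$, and since $T_z$ is increasing with image inside $[0,1]$, this means $y=0$ and $T_z(0)=0$.
\end{itemize}
By the uniqueness of the symbol $1$ (used as in Lemma \ref{2XLambdaCardinality}), the second case gives $z=1^j$, and then $T_w(0)=T_v(0)$. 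Because $\lambda\le\alpha_v$, we have $v\in A_\lambda$, so this point is already represented by the word $v=v\emptyset$. If $w$ and $v$ are incomparable, the two intervals are disjoint and the point is not in $T_v[0,1)$. The conclusion is that
\[
X_\lambda\cap T_v[0,1)=\{T_v(T_u(0)) : u\in A^v_\lambda\}.
\]

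The second step is to count this set. Since $T_v$ is injective, it has the same cardinality as $\{T_u(0): u\in A^v_\lambda\}$, and two words $u,u'\in A^v_\lambda$ give the same point exactly when $T_u(0)=T_{u'}(0)$. The argument of Lemma \ref{2XLambdaCardinality} applies verbatim: coincidence forces one word to extend the other by a string $1^j$. Moreover $A^v_\lambda$ is closed under taking prefixes, because $\alpha_{vu}\le\alpha_{vu'}$ whenever $u'$ is a prefix of $u$, as $T_{vu}[0,1)\subset T_{vu'}[0,1)$. Hence each equivalence class is a chain $u,u1,\dots,u1^m$ inside $A^v_\lambda$, and it has a unique representative $u$ with $u1\notin A^v_\lambda$. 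Counting these representatives gives $\#A^v_\lambda-\#\{u\in A^v_\lambda: u1\in A^v_\lambda\}$. The equivalent form of the statement is immediate, since by definition $u1\in A^v_\lambda$ if and only if $vu1\in A_\lambda$.

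If no $T_w$ with $w$ nonempty fixes $0$, then the second case in the first step cannot occur, and distinct words give distinct points. The map $u\mapsto T_{vu}(0)$ is then a bijection from $A^v_\lambda$ onto $X_\lambda\cap T_v[0,1)$.

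I expect the main obstacle to be the boundary issue in the first step: words $w$ that are not extensions of $v$ but whose left endpoint still falls into $T_v[0,1)$. The resolution is the prefix case handled above together with the hypothesis $\lambda\le\alpha_v$, which guarantees that $v$ itself, and therefore $u=\emptyset$, lies in $A^v_\lambda$.
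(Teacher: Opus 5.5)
Your proof is correct and takes the paper's route: the map $u\mapsto T_{vu}(0)$ is a bijection from the words $u\in A^v_\lambda$ with $u1\notin A^v_\lambda$ onto $X_\lambda\cap T_v[0,1)$, and injectivity is inherited from Lemma~\ref{2XLambdaCardinality}. Two of your steps fill small points that the paper's surjectivity argument passes over. One is the case of a word $w\in A_\lambda$ that is a proper prefix of $v$, which forces $v=w1^j$ and $T_w(0)=T_v(0)$, already represented by $u=\emptyset$ since $\lambda\le\alpha_v$. The other is the observation that $A^v_\lambda$ is closed under taking prefixes, which is what makes the representative of each class unique.
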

\begin{proof}

Assume that there is a unique symbol $1\in \mathcal{I}$ such that $T_1(0) = 0$. Since $0 < \lambda \leq \alpha_v$, we have $v\in A_\lambda$. Therefore $\emptyset \in A_\lambda^v$. Define 
$$R^v_\lambda := A^v_\lambda \setminus \{u \in A^v_\lambda: u1 \in A^v_\lambda\}.$$
We claim that the map $R_\lambda^v \to X_\lambda \cap T_v[0,1)$ defined by $u\mapsto T_{vu}(0)$ is a bijection.

First, if $u\in R^v_\lambda$, then $vu\in A_\lambda$, so $T_{vu}(0)\in X_\lambda$. Moreover, 
$$T_{vu}(0) \in T_v[0,1),$$
so the map is well defined.

To prove surjectivity, let $z\in X_\lambda \cap T_v[0,1)$. Then $z = T_w(0)$ for some $w \in A_\lambda$. Since $z\in T_v[0,1)$, either $w$ begins with $v$. Then $w = vu$ for some $u\in A^v_\lambda$. Therefore every point in $X_\lambda \cap T_v[0,1)$ has a representative of the form $T_{vu}(0)$ with $u\in A_\lambda^v$. Replacing $u$ by its representative $u1^j$, we get an element of $R^v_\lambda$.

Injectivity follows as in Lemma \ref{2XLambdaCardinality}. Indeed, if $u,w \in R^v_\lambda$ and $T_{vu}(0) = T_{vw}(0)$, then $T_u(0) = T_w(0)$. Hence $u$ is obtained by $w$ or vice versa by appending a finite string of $1$'s. Since both are in $R_\lambda^v$, this implies that they must be equal.

Thus the map is a bijection and we conclude that 
$$\#(X_\lambda \cap T_v[0,1)) = \# R^v_\lambda  = \#A_\lambda^v - \#\{u \in A^v _\lambda : u1\in A^v_\lambda\}.$$

On the other hand, if no symbol fixes zero, we get that the map $u\mapsto T_{vu}(0)$ is a bijection.
\end{proof}

\subsection{Renewal estimates}

The objective of this section is to establish a renewal equation for the amounts $\#A_\lambda$ and $\#A^v_\lambda$. Once we have done this, it will be possible to have an expression for $\mu_\lambda(T_v[0,1))$ in terms of this equation, and then compute the limit as $\lambda\to 0^+$

Let us consider the following map $\pi \colon \Sigma \to [0,1]$
$$\pi(\underline{x}) = \lim_{n \to\infty} T_{x_1\dots x_n}(0).$$
Equivalently,
$$\{\pi(\underline{x})\} = \bigcap_{n = 1}^\infty T_{x_1\dots x_n}([0,1]).$$

Let $f\colon [0,1]\to [0,1]$ be the full branch expanding map described in \eqref{1.1DefiSistemaDinamico0,1}.

We will prove that $\pi$ is continuous. Consider $\underline{x} = ( x_1, x_2 \ldots)\in \Sigma$ and for each $n$ define 
$$I_{x_1, \ldots, x_n} = \{x \in [0,1]: x\in T_{x_1}([0,1]), \,f(x) \in T_{x_2}([0,1]), \ldots, f^{n - 1}(x) \in T_{x_n}([0, 1])\}.$$
Let 
$$c_{n }(\underline{x}) = \{\underline{y} \in \Sigma: x_1 = y_1, \ldots, x_n = y_n\}.$$
Then 
$$\pi(c_{n }(\underline{x})) \subseteq I_{x_1, \ldots, x_n}.$$
Since $\text{diam}(I_{x_1,\ldots, x_n}) \leq c_M^n$, we get that $\pi$ is continuous.

We also know that the following diagram commutes:
$$% https://tikzcd.yichuanshen.de/#N4Igdg9gJgpgziAXAbVABwnAlgFyxMJZABgBpiBdUkANwEMAbAVxiRAB12BlLAcwFs6IAL6l0mXPkIoAjOSq1GLNpx4Cho8djwEiZGQvrNWiECVIGRYkBm1Sicg9SPLT5y8IUwoveEVAAZgBOEPxIZCA4EEhyisYq7NjqVoEhYYgATNRRSADMzkomIAEpxWnh2dGZBfGmnGhYpcGhMZV5Na4c7A0iFMJAA
\begin{tikzcd}
\Sigma \arrow[r, "\sigma"] \arrow[d, "\pi"] & \Sigma \arrow[d, "\pi"] \\
{[0,1]} \arrow[r, "f"]                      & {[0,1]}                
\end{tikzcd}$$

Let $\beta\colon \Sigma\cup \Sigma_* \to \R$ defined by
$$\beta(\emptyset) = 1, \quad \beta(x_1) = \alpha_{x_1},\quad \beta(x_1\cdots x_n) = \frac{\alpha_{x_1\dots x_n}}{\alpha_{x_2\dots x_n}} $$
for finite words and for $\Sigma$
$$\beta(x) = \lim_{n\to\infty } \frac{\alpha_{x_1\dots x_n}}{\alpha_{x_2\dots x_n}}.$$

By the mean value theorem
$$\frac{\alpha_{x_1\dots x_n}}{\alpha_{x_2\dots x_n}} = \frac{T_{x_1}(T_{x_2\ldots x_n} (1)) - T_{x_1}(T_{x_2\ldots x_n}(0))}{T_{x_2\ldots x_n}(1) -T_{x_2\ldots x_n}(0)} = T'_{x_1}(y_n),$$
for some $y_n \in [T_{x_2\ldots x_n}(0), T_{x_2\ldots x_n}(1)]$. Then, 
$$\lim_{n\to \infty}y_n = \pi(\sigma\underline{x}).$$
Consequently, by the continuity of $T_i'$ and the inverse function theorem,
\begin{equation}\label{2DefBeta}
    \beta(\underline{x}) =  T_{x_1}'(\pi(\sigma \underline{x})) =\frac{1}{f'(T_{x_1}(\pi(\sigma \underline{x})))}  =   \frac{1}{f'(\pi(\underline{x} ))} ,
\end{equation}
for all $\underline{x}\in \Sigma$. As a consequence $\beta|_\Sigma \in \mathcal{F}_\rho(\Sigma)$ for some $\rho \in (0,1)$.

Let $\gamma\colon \Sigma\cup \Sigma_* \to \R$ defined by 
\begin{equation}\label{2DefGamma}
    \gamma(\underline{x}) = - \log \beta(\underline{x}).
\end{equation}
In particular, for $\underline{x}\in \Sigma$, we have 
$$\gamma(\underline{x}) = \log f'(\pi( \underline{x} )).$$

Furthermore, by our assumptions on $\{T_i\}_{i \in \mathcal{I}}$ and by the Mean Value Theorem, for all $x\in \Sigma \cup \Sigma_* \setminus \{\emptyset\}$ we have $\gamma(x)$ is strictly greater than a constant $c > 0$. Since $\gamma = -\log \beta$ and $\beta|_{\Sigma} $ is bounded away from $1$, we have $\gamma|_\Sigma\in \mathcal{F}_\rho(\Sigma)$ and it is bounded away from zero.

Let ${N}\colon \R\times (\Sigma \cup \Sigma_*) \to \R$, given by 
$${N}(t, \underline{x}) = \sum_{n = 0}^{\infty}\sum_{\substack{\underline{y}: \sigma_*^n \underline{y} = \underline{x} \\ \sigma_*^j \underline{y} \neq \emptyset, \, j<n}}1\{S_n\gamma(\underline{y}) \leq t\}.$$

Because $S_n \gamma$ is strictly positive on $\Sigma$ and $\gamma$ is a regular potential, ${N}(t, \underline{x})$ is finite for all $(t, \underline{x})\in \R\times \Sigma$. This function also satisfies the following renewal equation
\begin{equation}\label{3.3RenewalEquation}
    {N}(t, \underline{x}) = \sum_{\substack{\underline{y}: \sigma_* \underline{y} = \underline{x}\\ y\neq \emptyset}} {N}(t - \gamma(\underline{y}), \underline{y}) + 1\{t\geq 0\}.
\end{equation}

For $\lambda >0$ let
$$\widetilde{N}(\lambda, \underline{x}) = {N}(-\log \lambda, \underline{x}).$$

We will establish a relation between this function and $\# A_\lambda$.

\begin{lem}\label{2Almabda}
For all $\lambda \in (0,1]$, we have 
$$\# A_\lambda = \widetilde{N}(\lambda, \emptyset).$$
\end{lem}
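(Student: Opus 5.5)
The plan is to unwind both sides and match terms word by word. With the embedding of $\Sigma_*$ into $Y$, the preimages $\underline{y}$ with $\sigma_*^n\underline{y}=\emptyset$ and $\sigma_*^j\underline{y}\neq\emptyset$ for $j<n$ are exactly the words $u=u_1\cdots u_n\in\Sigma_n$. For $n=0$ the only such preimage is $\emptyset$ itself, contributing $1\{0\le -\log\lambda\}=1$ since $\lambda\le 1$. This matches $\emptyset\in A_\lambda$, because $\alpha_\emptyset=1\ge\lambda$. So
$$\widetilde N(\lambda,\emptyset)=\sum_{n\ge0}\sum_{u\in\Sigma_n}1\{S_n\gamma(u)\le -\log\lambda\}.$$

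The key step is to compute the Birkhoff sum of $\gamma$ along a finite word. For $u=u_1\cdots u_n$ and $0\le j\le n-1$ we have $\sigma_*^j u=u_{j+1}\cdots u_n$, so by the definition of $\beta$ on finite words,
$$\gamma(\sigma_*^j u)=-\log\frac{\alpha_{u_{j+1}\cdots u_n}}{\alpha_{u_{j+2}\cdots u_n}},$$
with the convention that for $j=n-1$ the denominator is $\alpha_\emptyset=1$, consistent with $\beta(x_1)=\alpha_{x_1}$. Summing over $j$, the sum telescopes:
$$S_n\gamma(u)=-\log\alpha_u+\log\alpha_\emptyset=-\log\alpha_u.$$

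Hence $1\{S_n\gamma(u)\le-\log\lambda\}=1\{\alpha_u\ge\lambda\}$, and summing over all $n$ and $u\in\Sigma_n$ gives exactly $\#A_\lambda$, since $\Sigma_*$ is the disjoint union of the $\Sigma_n$.

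One point needs care: the sum must count each word exactly once. This holds because of the constraint $\sigma_*^j\underline y\neq\emptyset$ for $j<n$, together with the fact that $\sigma_*(\emptyset)=\emptyset$ would otherwise let $\emptyset$ reappear at every level. It is also worth noting finiteness: $\alpha_u\le c_M^{|u|}$, so only words with $|u|\le \log\lambda/\log c_M$ contribute. Each level may still be infinite when $\mathcal I$ is countable, but $\sum_i\alpha_{ui}=\alpha_u$ bounds the count at each level by $1/\lambda$. So both sides are finite, although the identity holds as an equality in $[0,\infty]$ regardless.

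The only real obstacle is bookkeeping: keeping the conventions for $\beta(\emptyset)$, $\beta(x_1)$ and the shift on finite words consistent so that the telescoping ends exactly at $\alpha_\emptyset=1$.
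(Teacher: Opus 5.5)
Your proof is correct and follows essentially the same route as the paper. Both arguments identify the preimages $\underline y$ with $\sigma_*^n\underline y=\emptyset$ and $\sigma_*^j\underline y\neq\emptyset$ for $j<n$ with the words of length exactly $n$, and telescope the ratios defining $\beta$ to get $\prod_{j<n}\beta(\sigma_*^j u)=\alpha_u$, equivalently $S_n\gamma(u)=-\log\alpha_u$. Your extra checks on the $n=0$ term and on finiteness (at most $1/\lambda$ contributing words per level, and only levels with $n\le \log\lambda/\log c_M$) are correct details that the paper leaves implicit.
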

\begin{proof}
Note that % Tenemos que incluir al vacio siempre.
\begin{align*}
    A_\lambda &= \bigcup_{n = 0}^\infty\{x_1\cdots x_n \in \Sigma_*: \alpha_{x_1\cdots x_n} \geq \lambda\}\\
    &= \bigcup_{n = 0}^\infty\left\{x_1\cdots x_n \in \Sigma_*: \frac{\alpha_{x_1\cdots x_n}}{\alpha_{x_2\cdots x_n}}\cdot \frac{\alpha_{x_2\cdots x_n}}{\alpha_{x_3\cdots x_n}}\cdots \frac{\alpha_{x_{n - 1}x_n}}{\alpha_{x_n}} \cdot \alpha_{x_n} \geq \lambda\right\}.
\end{align*}
Using the definition of the function $\beta$ and the fact that a word $v$ is of length $n$ if and only if $v\in \sigma_*^{-n}\emptyset$ and $\sigma_*^j v \neq \emptyset$ for $j < n$, we have
$$A_\lambda = \bigcup_{n = 0}^{\infty}\bigcup_{\substack{\substack{y: \sigma_*^n y = \emptyset \\ \sigma_* ^jy\neq \emptyset\\ j< n}}}\left\{y: \beta(y) \cdot \beta(\sigma_* y) \cdots  \beta(\sigma_*^{n - 1} y) \geq \lambda \right\}.$$
Since these unions are disjoint, this implies that 
$$\#A_\lambda = \sum_{n = 0}^{\infty} \sum_{\substack{\substack{y: \sigma_*^n y = \emptyset \\ \sigma_* ^jy\neq \emptyset\\ j< n}}}1\left\{ \beta(y) \cdot \beta(\sigma_* y) \cdots  \beta(\sigma_*^{n - 1} y) \geq \lambda \right\}.$$

Finally, by the definition of $\gamma$, we have 
\begin{equation}\label{2EquivLogConBeta}
    \beta(y) \cdot\beta(\sigma_*y)\cdots \beta(\sigma^{n - 1}_*y) \geq \lambda\iff S_n\gamma(y)\leq - \log\lambda.
\end{equation}
Therefore,
$$\# A_\lambda = \sum_{n = 0}^\infty \sum_{\substack{y: \sigma_*^n y = \emptyset\\ \sigma_* ^jy\neq \emptyset\\ j< n}} 1\{S_n \gamma(y) \leq -\log \lambda\} = \widetilde{N}(\lambda, \emptyset).$$
\end{proof}

\begin{cor}
Assume there is a symbol $1 \in \mathcal{I}$ such that $T_1(0) = 0$. Then, for all $\lambda\in (0,1]$
\begin{equation}\label{3.3EquationforXlambda}
\#X_\lambda = \widetilde{N}(\lambda, \emptyset) - \widetilde{N}(\lambda/\alpha_1, 1).   
\end{equation}
\end{cor}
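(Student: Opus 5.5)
By Lemma \ref{2XLambdaCardinality}, in the case where a symbol $1$ with $T_1(0)=0$ exists, we have
$$\#X_\lambda=\#A_\lambda-\#\{u\in A_\lambda: u1\in A_\lambda\}.$$
Lemma \ref{2Almabda} already gives $\#A_\lambda=\widetilde N(\lambda,\emptyset)$. It therefore remains to show that
$$\#\{u\in A_\lambda:u1\in A_\lambda\}=\widetilde N(\lambda/\alpha_1,1)=N(-\log\lambda+\log\alpha_1,\,1).$$

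First, if $u1\in A_\lambda$ then $u\in A_\lambda$ automatically. Indeed, $T_{u1}[0,1)\subset T_u[0,1)$, so $\alpha_u\ge\alpha_{u1}\ge\lambda$. Hence the set to count is simply $\{u\in\Sigma_*:\alpha_{u1}\ge\lambda\}$. This is in bijection, via $u\mapsto u1$, with the words $y$ of length $n+1$, for $n\ge 0$, that end in the symbol $1$ and satisfy $\alpha_y\ge\lambda$.

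Next I will translate this into the function $N(\cdot,1)$, viewing the one-letter word $1$ as a point of $\Sigma_*$. The words $y=u1$ with $|u|=n$ are exactly the $y$ with $\sigma_*^n y=1$ and $\sigma_*^j y\neq\emptyset$ for $j<n$. The telescoping product used in the proof of Lemma \ref{2Almabda} gives
$$\alpha_{u1}=\beta(y)\beta(\sigma_*y)\cdots\beta(\sigma_*^{n-1}y)\,\alpha_1,$$
because $\sigma_*^n y=1$ and $\beta(1)=\alpha_1$. By \eqref{2EquivLogConBeta}, the condition $\alpha_{u1}\ge\lambda$ is therefore equivalent to
$$S_n\gamma(y)\le-\log(\lambda/\alpha_1).$$
Summing over $n\ge0$ (with $n=0$ corresponding to $u=\emptyset$, which contributes $1\{0\le -\log(\lambda/\alpha_1)\}=1\{\lambda\le\alpha_1\}$) yields exactly $N(-\log(\lambda/\alpha_1),1)=\widetilde N(\lambda/\alpha_1,1)$. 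Note that $\widetilde N$ is defined for every $\lambda>0$, so the case $\lambda/\alpha_1>1$ is harmless: both sides vanish there.

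The only point needing care is the indexing of $N$ at a finite word. One must check that the constraint $\sigma_*^j y\ne\emptyset$ for $j<n$ in the definition of $N$ correctly forces $|y|=n+1$. One must also check that the $n=0$ term is the indicator at $t\ge 0$, consistent with the renewal equation \eqref{3.3RenewalEquation}. Once this bookkeeping is confirmed, subtracting gives \eqref{3.3EquationforXlambda}.
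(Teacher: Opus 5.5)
Your proof is correct and follows the paper's argument: you reduce via Lemma \ref{2XLambdaCardinality} and Lemma \ref{2Almabda} to counting $\{u : u1\in A_\lambda\}$, then telescope the $\beta$-product on $y=u1$ down to $\beta(1)=\alpha_1$ and apply \eqref{2EquivLogConBeta}. You also state explicitly that $u1\in A_\lambda$ forces $u\in A_\lambda$, which the paper uses silently, and the bookkeeping you flag is immediate, since $\sigma_*^n y = 1\neq\emptyset$ already forces $\sigma_*^j y\neq\emptyset$ for all $j<n$.
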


\begin{proof}
By Lemma \ref{2XLambdaCardinality}, we have 
$$\# X_\lambda = \#A_\lambda - \# \{v\in A_\lambda: v1\in A_\lambda\}.$$
By Lemma \ref{2Almabda}, 
$$\#A_\lambda = \widetilde{N}(\lambda, \emptyset).$$
It remains to prove that
$$\#\{v\in A_\lambda: v1\in A_\lambda\} = \widetilde{N}(\lambda/\alpha_1, 1).$$
Indeed
\begin{align*}
    \# \{v:v1\in A_\lambda\} &= \# \bigcup_{n = 1}^{\infty}\bigcup_{\substack{\substack{y: \sigma_*^{n- 1} y = 1}}}\left\{y: \beta(y) \cdot \beta(\sigma_* y) \cdots \beta(\sigma_*^{n - 2}y)\cdot \beta(1) \geq \lambda \right\}\\
    &= \sum_{n = 0}^{\infty} \sum_{\substack{\substack{y: \sigma_*^{n} y = 1}}}1\left\{ \beta(y) \cdot \beta(\sigma_* y) \cdots  \beta(\sigma_*^{n - 1} y) \geq \lambda/\alpha_1 \right\}.
\end{align*}
By equation \eqref{2EquivLogConBeta}, we conclude 
\begin{align*}
    \# \{v:v1\in A_\lambda\} &= \sum_{n = 0}^{\infty} \sum_{\substack{\substack{y: \sigma_*^{n} y = 1}}}1\left\{ \gamma(y) + \gamma(\sigma_*y) + \ldots + \gamma(\sigma_*^{n - 1} y) \leq - \log{(\lambda/\alpha_1)} \right\}\\
    &= \widetilde{N}(\lambda/\alpha_1, 1).
\end{align*}
\end{proof}

Now, we will repeat the previous construction of the Renewal Equation to get another Renewal equation for $\#A_v^\lambda$. Let $\beta_v\colon \Sigma\cup \Sigma_*\to \R$ be a function defined by $\beta_v(\emptyset)= 1$, for a nonempty finite word $x_1\ldots x_n$, 
$$\beta_v(x_1\ldots x_n) = \frac{\alpha_{vx_1\ldots x_n}}{\alpha_{vx_2\ldots x_n}},$$
and for $\underline{x}\in \Sigma$:
\begin{equation}
    \beta_v(\underline{x}): = \lim_{n\to\infty} \frac{\alpha_{vx_1\ldots x_n}}{\alpha_{vx_2\ldots x_n}}.
\end{equation}
%In any case, for $x\in \Sigma \cup \Sigma_*$:
%$$\beta_v(x) =  \frac{\beta(v\underline{x}) \cdot \beta(\sigma v \underline{x}) \cdots \beta(\sigma^{|v| - 1}v \underline{x})}{\beta(v\sigma \underline{x}) \cdot \beta(\sigma v\sigma \underline{x}) \cdots \beta(\sigma ^{|v| - 1}v\sigma \underline{x})} \cdot \beta(\underline{x}).$$

Denote by $\varphi_v\colon \Sigma \cup \Sigma_*\to \R$ the function defined by 
$$\varphi_v({x}) = \prod_{j = 0}^{|v| - 1}\beta(\sigma_*^j vx).$$
Then
$$\beta_v(\underline{x}) = \frac{\varphi_v(\underline{x})}{\varphi_v(\sigma_* \underline{x})}\beta(\underline{x}).$$
It follows from the chain rule, for every $\underline{x}\in \Sigma$ 
\begin{equation}\label{3uyDerivadadeTv}
    \varphi_v(\underline{x}) = T_v'(\pi(\underline{x})),
\end{equation}
By \eqref{3uyDerivadadeTv} and \eqref{2DefBeta}, for all $\underline{x}\in \Sigma$,
$$\beta_v(\underline{x}) = \frac{T_v'(\pi(\underline{x}))}{f'(\pi(\underline{x})) \cdot T_{v}'(\pi(\sigma \underline{x}))} = \frac{T_v'(\pi(\underline{x}))}{f'(T_{x_1}(\pi(\sigma \underline{x}))) T_{v}'(\pi(\sigma \underline{x}))} = \frac{T_v'(\pi(\underline{x})) \cdot T_{x_1}'(\pi(\sigma \underline{x}))}{T_v'(\pi(\sigma \underline{x}))}.$$

Let $\gamma_v\colon \Sigma\cup \Sigma_* \to \R$ defined by 
\begin{equation}\label{3.3.2Gammav}
\gamma_v(\underline{x}) = -\log \beta_v(\underline{x}).
\end{equation}

It is clear by definition that for all $x\in \Sigma \cup \Sigma_*$
$$\gamma_v({x}) = \gamma(x) -\log \varphi_v({x}) + \log \varphi_v(\sigma_* {x} ).$$
Hence, $\gamma_v$ is cohomologous to $\gamma$.

On the other hand, by 
\begin{equation}\label{3Cohomologus}
    \gamma_v(\underline{x}) = \log(T_v'(\pi(\sigma \underline{x}))) - \log(T_v'(\pi(\underline{x}))) + \log f'(\pi( \underline{x} )).
\end{equation}

Let ${N}_v\colon \R\times \Sigma\cup \Sigma_* \to \R$ given by 
\begin{equation}
    {N}_v(t, \underline{x}) = \sum_{n = 0}^{\infty} \sum_{\substack{\underline{y}:\sigma_*^n\underline{y} = \underline{x} \\
    \sigma_*^j \underline{y} \neq \emptyset \text{ for } j < n}}1\{S_n\gamma_v(y) \leq t\}.
\end{equation}
And, as before for $\lambda> 0$, let 
$$\widetilde{N}_v(\lambda, \underline{x}) = {N}_v(-\log \lambda, \underline{x}).$$

\begin{lem}\label{3.3.2Almabda_v}
For all $0 < \lambda \leq \alpha_v$, we have 
$$\# A_\lambda^v= \widetilde{N}_v(\lambda/\alpha_v, \emptyset).$$
\end{lem}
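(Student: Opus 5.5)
The plan is to copy the proof of Lemma \ref{2Almabda}, with $\beta$ replaced by $\beta_v$. The key observation is that the product of $\beta_v$ along the orbit of a finite word telescopes. Fix a word $u=x_1\cdots x_n\in\Sigma_*$ of length $n$. By the definition of $\beta_v$ on finite words,
$$\beta_v(u)\,\beta_v(\sigma_*u)\cdots\beta_v(\sigma_*^{n-1}u)=\frac{\alpha_{vx_1\cdots x_n}}{\alpha_{vx_2\cdots x_n}}\cdot\frac{\alpha_{vx_2\cdots x_n}}{\alpha_{vx_3\cdots x_n}}\cdots\frac{\alpha_{vx_n}}{\alpha_{v}}=\frac{\alpha_{vu}}{\alpha_v}.$$
For $n=0$ the empty product equals $1=\alpha_v/\alpha_v$. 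Hence $u\in A^v_\lambda$, i.e.\ $\alpha_{vu}\ge\lambda$, holds if and only if $\prod_{j=0}^{n-1}\beta_v(\sigma_*^ju)\ge\lambda/\alpha_v$. Taking $-\log$ and using $\gamma_v=-\log\beta_v$ from \eqref{3.3.2Gammav}, this is equivalent to
$$S_n\gamma_v(u)\le-\log(\lambda/\alpha_v).$$

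Next I would reorganize the count exactly as in Lemma \ref{2Almabda}. A word has length $n$ if and only if $\sigma_*^n u=\emptyset$ and $\sigma_*^ju\neq\emptyset$ for $j<n$. So $A^v_\lambda$ is the disjoint union over $n\ge0$ of the sets of such $u$ satisfying the inequality above. Summing the indicator functions gives
$$\#A^v_\lambda=\sum_{n=0}^\infty\sum_{\substack{u:\sigma_*^nu=\emptyset\\ \sigma_*^ju\neq\emptyset,\ j<n}}1\{S_n\gamma_v(u)\le-\log(\lambda/\alpha_v)\}=N_v(-\log(\lambda/\alpha_v),\emptyset)=\widetilde N_v(\lambda/\alpha_v,\emptyset).$$
The $n=0$ term contributes $1\{0\le -\log(\lambda/\alpha_v)\}$, which equals $1$ precisely because $\lambda\le\alpha_v$. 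This matches $\emptyset\in A^v_\lambda$, so the hypothesis $0<\lambda\le\alpha_v$ is used exactly here.

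There is essentially no analytic obstacle; the only care needed is bookkeeping. First, one must confirm that $\beta_v$ on finite words is given by the quotient of $\alpha$'s. This requires $\alpha_{vx_2\cdots x_n}>0$, which holds since each $T_i$ is a diffeomorphism onto a nondegenerate interval. Second, one must check the endpoint conventions, namely $\beta_v(\emptyset)=1$ and the $n=0$ term, so that no off-by-one error arises in the telescoping. Finiteness of the sum is not an issue: $A^v_\lambda$ is finite, because $\alpha_{vu}\le c_M^{|vu|}$ and there are only finitely many words $u$ with $\alpha_{vu}\ge\lambda$. The latter holds since $\sum_{|u|=n}\alpha_{vu}=\alpha_v$ for each $n$, combined with the uniform contraction bound.
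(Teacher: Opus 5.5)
Your proof is correct and follows essentially the same route as the paper. You telescope $\prod_{j=0}^{n-1}\beta_v(\sigma_*^j u)=\alpha_{vu}/\alpha_v$, convert the inequality via $\gamma_v=-\log\beta_v$, and decompose $A^v_\lambda$ by word length as in Lemma~\ref{2Almabda}; your explicit check that the $n=0$ term equals $1$ exactly because $\lambda\le\alpha_v$ is a detail the paper leaves implicit.
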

\begin{proof}
Since for every finite word $u\in \Sigma_*$, with $|u| = n$ , we have
$$\prod_{j = 0}^{n - 1}  \beta_v(\sigma_*^j u) = \frac{\alpha_{vu}}{\alpha_v}.$$
Then, by the same reasoning as Lemma \ref{2Almabda},
\begin{align*}
    A_\lambda^v &= \bigcup_{n = 0}^\infty\{x_1\cdots x_n \in \Sigma_*: \alpha_{vx_1\cdots x_n} \geq \lambda\}\\
    &= \bigcup_{n = 0}^{\infty}\bigcup_{\substack{\substack{y: \sigma_*^n y = \emptyset \\ \sigma_* ^jy\neq \emptyset\\ j< n}}}\left\{y: \beta_v(y) \cdot \beta_v(\sigma_* y) \cdots  \beta_v(\sigma_*^{n - 1} y) \geq \lambda /\alpha_v\right\}.
\end{align*}
Since these unions are disjoint, this implies that 
$$\#A_\lambda^v = \sum_{n = 0}^{\infty} \sum_{\substack{\substack{y: \sigma_*^n y = \emptyset \\ \sigma_* ^jy\neq \emptyset\\ j< n}}}1\left\{ \beta_v(y) \cdot \beta_v(\sigma_* y) \cdots  \beta_v(\sigma_*^{n - 1} y) \geq \lambda/\alpha_v \right\}.$$

Finally, by the definition of $\gamma$, we have 
\begin{equation}\label{2EquivLogConBetav}
    \beta_v(y) \cdot\beta_v(\sigma_*y)\cdots \beta_v(\sigma^{n - 1}_*y) \geq \lambda/\alpha_v\iff S_n\gamma_v(y)\leq - \log(\lambda/\alpha_v).
\end{equation}
Therefore,
$$\# A_\lambda^v = \sum_{n = 0}^\infty \sum_{\substack{y: \sigma_*^n y = \emptyset\\ \sigma_* ^jy\neq \emptyset\\ j< n}} 1\{S_n \gamma_v(y) \leq -\log (\lambda/ \alpha_v)\} = \widetilde{N}_v(\lambda/\alpha_v, \emptyset).$$
\end{proof}

\begin{cor}
Assume that there is a symbol $1\in \mathcal{I}$ such that $T_1(0) = 0$. Let $v\in \Sigma_*$ and $0 < \lambda \leq \alpha_v$. Then
\begin{equation}
    \#(X_{\lambda} \cap T_v[0,1)) = \widetilde{N_v}(\lambda/\alpha_v, \emptyset) - \widetilde{N_v}\left(\lambda/\alpha_{v1}, 1\right).
\end{equation}    
\end{cor}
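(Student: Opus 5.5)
The plan is to argue exactly as in the corollary giving \eqref{3.3EquationforXlambda}, with $\gamma_v$ in place of $\gamma$. The starting point is Lemma \ref{3.2MedidaCilindro}, which in the case of a unique symbol $1$ with $T_1(0)=0$ gives
$$\#(X_\lambda\cap T_v[0,1)) = \#A^v_\lambda - \#\{u\in A^v_\lambda : vu1\in A_\lambda\}.$$
Since $0<\lambda\le\alpha_v$, Lemma \ref{3.3.2Almabda_v} identifies the first term with $\widetilde{N}_v(\lambda/\alpha_v,\emptyset)$. It therefore remains to show
$$\#\{u\in A^v_\lambda : vu1\in A_\lambda\} = \widetilde{N}_v(\lambda/\alpha_{v1},1).$$

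First I drop the condition $u\in A^v_\lambda$. Since $T_{vu1}[0,1)\subset T_{vu}[0,1)$, we have $\alpha_{vu1}\le\alpha_{vu}$, so $vu1\in A_\lambda$ already forces $vu\in A_\lambda$. The set to count is thus $\{u\in\Sigma_* : \alpha_{vu1}\ge\lambda\}$.

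Next I match these words with the preimages $y$ of the word $1$ that appear in $N_v(\cdot,1)$. Under the embedding of finite words, the $y$ with $\sigma_*^n y = 1$ and $\sigma_*^j y\neq\emptyset$ for $j<n$ are exactly the words $y=u1$ with $|u|=n$; the nonemptiness condition is automatic, since every shift $\sigma_*^j(u1)$ with $j\le n$ still ends in the symbol $1$. Writing $u=x_1\cdots x_n$, the definition of $\beta_v$ gives the telescoping product
$$\prod_{j=0}^{n-1}\beta_v(\sigma_*^j(u1)) = \frac{\alpha_{vx_1\cdots x_n1}}{\alpha_{vx_2\cdots x_n1}}\cdots\frac{\alpha_{vx_n1}}{\alpha_{v1}} = \frac{\alpha_{vu1}}{\alpha_{v1}}.$$
Taking $-\log$ as in \eqref{2EquivLogConBetav}, the condition $\alpha_{vu1}\ge\lambda$ becomes $S_n\gamma_v(u1)\le-\log(\lambda/\alpha_{v1})$. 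Summing the indicator over $n\ge0$ and over these preimages gives exactly $N_v(-\log(\lambda/\alpha_{v1}),1)=\widetilde{N}_v(\lambda/\alpha_{v1},1)$. The term $n=0$ corresponds to $u=\emptyset$ and contributes $1\{\alpha_{v1}\ge\lambda\}$, which matches.

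The only point needing care is the bookkeeping: checking that the telescoping uses the right normalisation ($\alpha_{v1}$ rather than $\alpha_v$), and that the case $\lambda>\alpha_{v1}$ is consistent. In that case both sides vanish. On the right, $t=-\log(\lambda/\alpha_{v1})<0$, and $\gamma_v>0$ on nonempty finite words because $\beta_v<1$ there (a subinterval is strictly shorter), so every indicator is $0$. On the left, no $u$ satisfies $\alpha_{vu1}\ge\lambda$ because $\alpha_{vu1}\le\alpha_{v1}<\lambda$.
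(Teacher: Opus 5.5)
Your main argument is correct and is essentially the paper's proof. Lemma~\ref{3.2MedidaCilindro} and Lemma~\ref{3.3.2Almabda_v} give the first term. The correspondence $u\leftrightarrow u1$, together with the telescoping identity $\prod_{j=0}^{n-1}\beta_v(\sigma_*^j(u1))=\alpha_{vu1}/\alpha_{v1}$, identifies the second count with $\widetilde{N}_v(\lambda/\alpha_{v1},1)$. You are slightly more careful than the paper in justifying that the condition $u\in A^v_\lambda$ can be dropped.

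Your final paragraph, however, should be deleted: it is unnecessary, and its claims are false for nonlinear maps.

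\textbf{Unnecessary.} The equivalence $\alpha_{vu1}\ge\lambda\iff S_n\gamma_v(u1)\le-\log(\lambda/\alpha_{v1})$ holds for every $\lambda>0$, including the term $n=0$. So your bijection already covers $\lambda>\alpha_{v1}$, with no case split.

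\textbf{False.} $T_{vx_1\cdots x_m}[0,1)$ lies in $T_v[0,1)$ but need not lie in $T_{vx_2\cdots x_m}[0,1)$. The ratio $\beta_v(x_1\cdots x_m)$ compares integrals of $T_v'$ over $T_{x_1\cdots x_m}[0,1]$ and $T_{x_2\cdots x_m}[0,1]$, which are in general not nested. So it can exceed $1$ when $T_v'$ has large distortion. Likewise $\gamma_v$, being only cohomologous to the positive function $\gamma$, need not be positive pointwise.

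\textbf{Counterexample.} Take $T_1[0,1]=[0,\tfrac12]$ and $T_2[0,1]=[\tfrac12,1]$, with
$T_2'\approx0.9$ on $[0,\tfrac12]$ and $T_2'\approx0.1$ on $[\tfrac12,1]$, so that $T_{21}[0,1]\approx[\tfrac12,0.95]$, and
$T_1'\approx0.1$ on $[0,\tfrac12]$ and $T_1'\approx0.9$ on $[\tfrac12,1]$.
Then $\alpha_{11}\approx0.05$ while $\alpha_{121}\approx0.405$. For $v=1$ and $\lambda=0.1\in(\alpha_{v1},\alpha_v]$, the word $u=2$ is counted on both sides. The two sides agree, as your main argument shows, but they do not vanish. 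The inequalities $\beta_v<1$ and $\alpha_{vu1}\le\alpha_{v1}$ hold only in the affine case.
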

\begin{proof}
By Lemma \ref{3.2MedidaCilindro} and Lemma \ref{3.3.2Almabda_v} it will be enough to prove that 
$$\# \{w:vw1\in A_\lambda\} = \widetilde{N}_v\left(\lambda/\alpha_{v1}, 1\right).$$

Let $w\in \Sigma_n$ and set $y = w1$. Then $\sigma_*^n y = 1$. Moreover,
$$\prod_{j = 0}^{n - 1}\beta_v(\sigma_*^j y) = \frac{\alpha_{vw1}}{\alpha_{v1}}$$
Therefore 
$$vw1 \in A_\lambda \iff S_n\gamma_v(y) \leq -\log (\lambda / \alpha_{v1}).$$
Hence 
$$\#\{w: vw1 \in A_\lambda\} = \widetilde{N}_v(\lambda / \alpha_{v1}, 1).$$
\end{proof}

\begin{cor}\label{3CorollaryMeasureCylinder}
Assume that there is a symbol $1\in \mathcal{I}$ such that $T_1(0) = 0$. For all $\lambda \in (0, \alpha_v]$,
$$\mu_\lambda(T_v[0,1)) = \frac{\widetilde{N}_v(\lambda/ \alpha_v, \emptyset) - \widetilde{N}_v(\lambda/ \alpha_{v1}, 1)}{\# X_\lambda}.$$
\end{cor}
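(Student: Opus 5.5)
This is a direct combination of two earlier results, so the plan is short. By definition,
$$\mu_\lambda(T_v[0,1)) = \frac{\#(X_\lambda\cap T_v[0,1))}{\#X_\lambda}.$$
The denominator is finite and positive: $0\in X_\lambda$, and $X_\lambda = L_{n(\lambda)}$ is finite. So it suffices to identify the numerator.

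The identification is exactly the preceding corollary. That corollary combines Lemma \ref{3.2MedidaCilindro} with Lemma \ref{3.3.2Almabda_v}, and under the standing assumption that $T_1(0)=0$ and $0<\lambda\le\alpha_v$ it gives
$$\#(X_{\lambda} \cap T_v[0,1)) = \widetilde{N}_v(\lambda/\alpha_v, \emptyset) - \widetilde{N}_v(\lambda/\alpha_{v1}, 1).$$
Dividing by $\#X_\lambda$ yields the stated formula.

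Two things need checking along the way.

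\begin{itemize}
\item The hypothesis $\lambda\le\alpha_v$ is what makes $\emptyset\in A^v_\lambda$. This is the condition under which Lemma \ref{3.2MedidaCilindro} applies.
\item The argument $\lambda/\alpha_{v1}$ may exceed $1$. In that case $-\log(\lambda/\alpha_{v1})<0$, so $\widetilde{N}_v(\lambda/\alpha_{v1},1)=0$. This is consistent, because no $w$ then satisfies $vw1\in A_\lambda$. The identity in the previous corollary was derived for every $\lambda$ in this range, so no separate case is needed.
\end{itemize}

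There is no real obstacle. The only care required is bookkeeping: confirming that the counting identity from the previous corollary is used with the same normalisation $\widetilde N_v(\lambda,x)=N_v(-\log\lambda,x)$, and that the same hypothesis ($T_1(0)=0$) is in force.
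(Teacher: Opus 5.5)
Your proposal is correct and follows the paper: the statement is the preceding corollary (Lemma~\ref{3.2MedidaCilindro} combined with Lemma~\ref{3.3.2Almabda_v}) divided by $\#X_\lambda$, and the paper gives no further argument. Your second bullet contains a false claim, though nothing depends on it.

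\begin{itemize}
\item In the nonlinear setting, $\gamma_v$ is only cohomologous to the positive potential $\gamma$, and $\gamma_v$ itself can take negative values.
\item So $\lambda/\alpha_{v1}>1$ does not force $\widetilde N_v(\lambda/\alpha_{v1},1)=0$. For example, if $T_v'$ is much smaller on $T_1[0,1]$ than on $T_{21}[0,1]$, one can have $\alpha_{v21}>\alpha_{v1}$.
\item Then for $\lambda\in(\alpha_{v1},\alpha_{v21}]\subset(0,\alpha_v]$, the word $w=2$ satisfies $vw1\in A_\lambda$ and is counted.
\end{itemize}

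This is harmless, because, as you say, the identity $\#\{w: vw1\in A_\lambda\}=\widetilde N_v(\lambda/\alpha_{v1},1)$ holds for every such $\lambda$ regardless of signs.
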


\subsection{Proof of Lemma \ref{2MainLem}}
The following lemma is a consequence of Bowen's formula for countable many branches (see \cite{MauldinUrbanski} and \cite{Iommi}).

\begin{lem}
For $\gamma\colon \Sigma\to \R$ defined in equation \eqref{2DefGamma} we have that 
$$P(-\gamma, \sigma) = 0.$$
\end{lem}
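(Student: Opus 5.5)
We prove $P(-\gamma,\sigma)=0$ directly from the definition of the Gurevich pressure, by comparing the Birkhoff sums $S_n(-\gamma)$ with the lengths $\alpha_w$ of the level-$n$ cylinders. The point is that these lengths sum to $1$ at every level, up to a uniformly bounded distortion factor. This is the classical Bowen argument: the "dimension" of $[0,1]$ is $1$.

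\textbf{Step 1 (lengths as Birkhoff sums).} Fix $w\in\Sigma_n$ and $\tau\in[w]$. By \eqref{2DefBeta}, $e^{-\gamma(\sigma^j\tau)}=T'_{\tau_{j+1}}(\pi(\sigma^{j+1}\tau))$. The chain rule then gives
$$e^{S_n(-\gamma)(\tau)}=T_w'(\pi(\sigma^n\tau)).$$
By the mean value theorem, $\alpha_w=T_w'(y)$ for some $y\in[0,1]$.

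\textbf{Step 2 (bounded distortion).} We need a constant $D\ge 1$, independent of $n$ and $w$, such that
$$D^{-1}\le \frac{T_w'(x)}{T_w'(y)}\le D \qquad\text{for all } x,y\in[0,1].$$
Write
$$\log T_w'(x)-\log T_w'(y)=\sum_{j=1}^{n}\bigl(\log T'_{w_j}(T_{w_{j+1}\cdots w_n}x)-\log T'_{w_j}(T_{w_{j+1}\cdots w_n}y)\bigr).$$
Since $T_i'=1/(f'\circ T_i)$ and $f'$ restricted to each branch is $\varepsilon$-Hölder and bounded below by $1/c_M>1$, each term is at most $H\,|T_{w_{j+1}\cdots w_n}x-T_{w_{j+1}\cdots w_n}y|^\varepsilon\le H c_M^{(n-j)\varepsilon}$. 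Summing the geometric series gives the bound $D$.

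The main obstacle is uniformity of the Hölder constant $H$ over the countable alphabet. We need a uniform bound on the Hölder seminorm of $\log f'$ on the branches. If Definition \ref{1.1DefPartitionGeneratedbyTi} is not read as providing this, I would extract it from $\gamma\in\mathcal F_\rho(\Sigma)$, which is part of the regular-potential hypothesis. Indeed, $\text{var}_{k}(\gamma)\le \|\gamma\|_\rho\rho^{k}$ is uniform over all symbols, and it yields
$$|S_n\gamma(\tau)-S_n\gamma(\tau')|\le \sum_{k\ge1}\|\gamma\|_\rho\rho^k=:\log D \qquad\text{for }\tau,\tau'\in[w].$$
This gives the same conclusion directly on the symbolic side, with $\alpha_w$ realised as $T'_w$ at some point of $\pi(\Sigma)$, which is dense by Case 1 of the previous proof.

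\textbf{Step 3 (conclusion).} Steps 1 and 2 give
$$D^{-1}\alpha_w\le \exp\Bigl(\sup_{\tau\in[w]}S_n(-\gamma)(\tau)\Bigr)\le D\,\alpha_w.$$
The half-open intervals $T_w[0,1)$, $w\in\Sigma_n$, are pairwise disjoint, and by induction on the hypothesis $\sum_i\leb(T_i[0,1))=1$ they cover $[0,1]$ up to a null set. Hence $\sum_{w\in\Sigma_n}\alpha_w=1$. Summing over $w\in\Sigma_n$,
$$D^{-1}\le\sum_{w\in\Sigma_n}\exp\Bigl(\sup_{[w]}S_n(-\gamma)\Bigr)\le D.$$
Taking $\frac1n\log$ and letting $n\to\infty$ shows that the limit defining $P(-\gamma)$ exists and equals $0$.
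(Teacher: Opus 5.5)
Your argument is correct, but it takes a different route from the paper. The paper gives no computation: it simply invokes Bowen's formula for countable conformal IFS (Mauldin--Urba\'nski, Iommi), so that the zero of $t\mapsto P(-t\gamma)$ is the Hausdorff dimension of the limit set, which is $1$ here. You instead work directly from the definition of the Gurevich pressure, using three ingredients:
\begin{itemize}
\item the identity $e^{S_n(-\gamma)(\tau)}=T_w'(\pi(\sigma^n\tau))$;
\item symbolic bounded distortion coming from $\gamma\in\mathcal F_\rho(\Sigma)$;
\item the fact that $\sum_{w\in\Sigma_n}\alpha_w=1$.
\end{itemize}

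What your route buys is a self-contained proof with no Hausdorff-dimension or conformal-measure machinery. To get exactly $P(-\gamma)=0$ from Bowen's formula one must also check the conformal-IFS hypotheses and that the system is regular, so that the pressure actually vanishes at $t=\dim_H=1$. The paper leaves this implicit; your argument needs none of it.

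You also get more than the statement. The level-$n$ partition sums stay in $[1,D]$ for every $n$, so the limit defining the pressure exists, and $-\gamma$ is automatically summable. Since Definition \ref{1.2RegularPotential} already lists $P(-\gamma)=0$ among its requirements, your proof shows that this clause is redundant once $\gamma$ is H\"older.

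You are right that in the countable case the uniform distortion constant must come from the H\"older hypothesis on $\gamma$, not from Definition \ref{1.1DefPartitionGeneratedbyTi}, which gives no uniform H\"older constant across branches. The Bowen-formula route needs the same input, in the form of a bounded distortion property.

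One small repair is needed. The mean-value point $y$ with $\alpha_w=T_w'(y)$ need not lie in $\pi(\Sigma)$, so you should not say $\alpha_w$ is ``realised'' at a point of $\pi(\Sigma)$. Instead, extend the ratio bound $T_w'(x)/T_w'(x')\le D$ from $x,x'\in\pi(\Sigma)$ to all of $[0,1]$, using continuity of $T_w'$ and density of $\pi(\Sigma)$. Alternatively, write $\alpha_w=\int_0^1T_w'\,d\leb$ and use that $\pi(\Sigma)$ has full measure. Since $\sigma^n[w]=\Sigma$, this gives $\alpha_w\le\exp\bigl(\sup_{[w]}S_n(-\gamma)\bigr)=\sup_{\pi(\Sigma)}T_w'\le D\,\alpha_w$, and your Step 3 then goes through unchanged.
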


%From the renewal equation \eqref{3.3RenewalEquation} one sees that \ref{4RenewalGamma} implies 
%$$C(x) = \sum_{y: \sigma y = x}e^{-\gamma(y)} C(y).$$

Fix $v\in \Sigma_*$. If $v= \emptyset$, the conclusion is direct, so we may assume that $v\neq \emptyset$. Since $\gamma$ and $\gamma_v$ are cohomologous, and we are assuming that $\gamma$ is nonlattice, then $\gamma_v$ is also nonlattice. On the other hand, $-\log \varphi_v\in \mathcal{F}_\rho^b(\Sigma)$, $\gamma_v$ is also regular. Hence, $\mu_{-\gamma} = \mu_{-\gamma_v}$ and
$$P(-\gamma, \sigma) = P(-\gamma_v, \sigma) = 0.$$
In what follows, we write 
$$\mu := \mu_{-\gamma} = \mu_{-\gamma_v} \quad\text{ and } \quad  \nu:= \nu_{-\gamma}.$$
Moreover, we use the same notation $h_{-\gamma}$ and $h_{-\gamma_v}$ for the positive continuous extensions to $\Sigma\cup \Sigma_*$ given by Theorem \ref{2RenewalTheoremNLFiniteWords}.

Since $h_{-\gamma}$ and $h_{-\gamma_v}$ are the eigenfunctions of the Ruelle Perron Frobenius operator of $-\gamma$ and $-\gamma_v$ respectively, and
$$-\gamma_v = -\gamma + \log \varphi_v - \log \varphi_v \circ \sigma,$$
then the cohomology identities in \eqref{1.2EqCohomologus} imply
$$h_{-\gamma_v} = \frac{h_{-\gamma}}{\varphi_v} \cdot \left(\int \varphi_v \,d\nu_{-\gamma}\right).$$
For the dual eigenmeasures, we obtain
\begin{equation}\label{4EqNuGammaVConNuGamma}
    \nu_{-\gamma_v} = \frac{\varphi_v\cdot \nu_{-\gamma}}{\int \varphi_v\,d\nu_{-\gamma}}.
\end{equation}

Let $C(x) = C_{\gamma}(x)$ and $C_v(x) = C_{\gamma_v}(x)$ in \ref{2RenewalTheoremNLFiniteWords}.

\begin{proof}[Proof Lemma \ref{2MainLem}]
First assume that there is a unique symbol $1\in \mathcal{I}$ such that $T_1(0) = 0$.

Using Corollary \ref{3CorollaryMeasureCylinder}, we have 
%\ref{3CorollaryMeasureCylinder}, note that
\begin{align*}
    \lim_{\lambda \to 0^+}\mu_{\lambda}(T_v[0, 1)) &= \lim_{\lambda \to 0^+}\frac{\widetilde{N_v}(\lambda/ \alpha_v, \emptyset) - \widetilde{N_v}(\lambda/\alpha_{v1}, 1)}{\widetilde{N}( \lambda,  \emptyset) - \widetilde{N}(\lambda/\alpha_1, 1)}\\
    &= \lim_{\lambda \to 0^+}\frac{\alpha_v\cdot\frac{\lambda}{\alpha_v} \widetilde{N_v}(\lambda/ \alpha_v, \emptyset) - \alpha_{v1}\cdot\frac{\lambda}{\alpha_{v1}} \widetilde{N_v}(\lambda/\alpha_{v1}, 1)}{\lambda \widetilde{N}( \lambda,  \emptyset) -\alpha_1\frac{\lambda}{\alpha_1} \widetilde{N}(\lambda/\alpha_1, 1)}.
\end{align*}

Thus, by the Renewal Theorem \ref{2RenewalTheoremNLFiniteWords}
\begin{equation}
    \lim_{\lambda \to 0^+}\mu_{\lambda}(T_v[0, 1)) = \frac{\alpha_vC_v(\emptyset) - \alpha_{v1}C_{v}(1)}{C(\emptyset) - \alpha_1C(1)}.
\end{equation}
Since $\mu$ is $\sigma$-invariant we have 
$$\int \gamma_v\,d\mu = \int {\gamma}\,d\mu,$$
and by \eqref{3uyDerivadadeTv} 
$$\int \varphi_v\,d\nu = \int T_v'\circ \pi \,d\nu.$$ 
On the other hand, by the definition of $\varphi_v$, note that 
$$\varphi_v(\emptyset) = \alpha_v \quad \text{ and } \quad \varphi_v(1) = \frac{\alpha_{v1}}{\alpha_1}$$

Thus
$$\alpha_v C_v(\emptyset) = \frac{1}{\int \gamma\,d\mu} \cdot h_{-\gamma}(\emptyset) \cdot \int \varphi_v\,d\nu,$$
and 
$$\alpha_{v1} C_v(1) = \frac{\alpha_1}{\int \gamma\,d\mu} \cdot h_{-\gamma}(1) \cdot \int \varphi_v\,d\nu.$$
Consequently,
$$\frac{\alpha_vC_v(\emptyset) - \alpha_{v1}C_{v}(1)}{C(\emptyset) - \alpha_1C(1)} = \int \varphi_v\,d\nu = \int T_v' \circ \pi \,d\nu.$$

Let $\gamma^*\colon [0,1] \to \R$ be the map defined by
$$\gamma^*(x) = \log f'(x),$$
on the interiors of the partition elements, and define it to be zero at the partition endpoints. 

Note that $\gamma^*\circ \pi = \gamma$ away from the preimages of partition endpoints. This set of exceptional points is countable. Moreover, it has zero $\nu$-measure, because $\nu$ is not atomic. Indeed, for every $\underline{x}\in \Sigma$ and every $n\geq 1$, we have 
$$\nu(\{\underline{x}\}) = \int \mathcal{L}^n_{-\gamma}1_{\{ \underline{x} \}}\,d\nu = e^{-S_n \gamma(\underline{x})}\nu(\{\sigma^n x\}).$$
By the uniform contraction assumption and the fact that $\nu$ is a probability measure, we get 
$$\nu(\{\underline{x}\}) \leq c_M^n,$$
for every $n\in \N$. Letting $n \to\infty$, we get that $\nu(\{\underline{x}\}) = 0$.

Hence, the set of preimages of partition endpoints have zero mass for the conformal measure, so they do not affect the computation. Let $\nu_* = \nu \circ \pi^{-1}$, that is, 
$$\pi_*(\nu)(A) = \nu(\pi^{-1}(A)).$$
Then, for all functions $\varphi\colon [0,1] \to \R$,
$$\int \varphi\, d\pi_*(\nu) = \int \varphi\circ \pi \,d\nu.$$
Hence, if $\nu$ is a conformal measure for $-\gamma$, then $\pi_*(\nu)$ is a conformal measure for $-\gamma^*$ in the dynamical system $([0,1], f)$. In this system, we know by the change-of-variables formula that the Lebesgue measure is a conformal measure. This is because for all continuous functions $\varphi\in C([0,1])$,
\begin{align*}
  \int \varphi\,d\mathcal{L}_{-\gamma^*}^* \leb &= \int \mathcal{L}_{-\gamma^*} \varphi\,d\leb = \sum_{i\in \mathcal{I}}\int T_i'\cdot (\varphi\circ T_i)\,d\leb \\
  &= \sum_{i\in \mathcal{I}} \int_{T_i[0,1]} \varphi \,d\leb = \int\varphi\,d\leb.  
\end{align*}
Thus 
$$\mathcal{L}_{-\gamma^*}^* \leb = \leb.$$

By the uniqueness of the conformal probability measure for the potential $-\log |f'|$ for full-branch expanding Markov maps (\cite{MauldinUrbanzkiConformal}), we obtain
$$ \pi_*(\nu)=  \leb.$$

Thus 
$$\lim_{\lambda \to 0^+} \mu_{\lambda}( T_v([0,1)) )= \int T_v'\circ \pi\,d\nu = \int T_v' \,d\leb =T_v(1) - T_v(0) = \alpha_v.$$

Finally, if there is no symbol which fixes zero, we arrive at the same conclusion since 
$$\lim_{\lambda \to 0^+}\mu_{\lambda}(T_v[0, 1)) = \frac{\alpha_vC_v(\emptyset)}{C(\emptyset)} = \alpha_v.$$
\end{proof}

\subsubsection{Finite alphabet case}

To recover the original Kakutani, we now relate $\#\widehat{X_\lambda}$ with $\# A_\lambda$. Assume in this subsection that $\mathcal{I}$ is finite and $1\in \mathcal{I}$ is such that $T_1(0) = 0$.

\begin{cor}
For all $\lambda\in (0,1]$
\begin{equation}\label{3.2.1.RecoverCasoFinito}
    \#\widehat{X_\lambda}=  \#A_\lambda \cdot (\# \mathcal{I} - 1) + 1.
\end{equation}
\end{cor}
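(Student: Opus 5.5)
The plan is to exhibit $\widehat{X}_\lambda$ as a disjoint union
$$\widehat{X}_\lambda=\{0\}\sqcup\{T_{vj}(0): v\in A_\lambda,\ j\in\mathcal{I}\setminus\{1\}\},$$
and then show that the map $(v,j)\mapsto T_{vj}(0)$ on $A_\lambda\times(\mathcal{I}\setminus\{1\})$ is injective. Once both facts are in place, we get $\#\widehat{X}_\lambda = 1+\#A_\lambda\,(\#\mathcal{I}-1)$, which is \eqref{3.2.1.RecoverCasoFinito}. Since $\mathcal{I}$ is finite, $A_\lambda$ is finite, so all counts make sense.

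\textbf{Step 1 (structure of $A_\lambda$).} $A_\lambda$ is nonempty, since $\emptyset\in A_\lambda$ because $\alpha_\emptyset=1\ge\lambda$. It is also closed under taking prefixes: if $w$ is a prefix of $v$, then $T_v[0,1)\subset T_w[0,1)$, so $\alpha_w\ge\alpha_v\ge\lambda$.

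\textbf{Step 2 (the decomposition).} First, $0=T_{\emptyset 1}(0)\in\widehat{X}_\lambda$. Next take a point $T_{v1}(0)$ with $v\in A_\lambda$; it equals $T_v(0)$.
\begin{itemize}
\item If $v=1^k$, this point is $0$.
\item Otherwise write $v=wj1^k$ with $j\neq 1$. Then $T_v(0)=T_{wj}(0)$, and $w\in A_\lambda$ by Step 1.
\end{itemize}
Hence every element of $\widehat{X}_\lambda$ lies in the right-hand side, and the reverse inclusion is trivial.

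For disjointness: if $T_{vj}(0)=0$ for some $j\neq1$, then by the argument of Lemma \ref{2XLambdaCardinality} (comparing with the empty word, which also gives $0$) we would have $vj=1^m$. This is impossible because $j\neq1$.

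\textbf{Step 3 (injectivity).} Suppose $T_{vj}(0)=T_{v'j'}(0)$ with $j,j'\neq1$. The proof of Lemma \ref{2XLambdaCardinality} shows that, by disjointness of the half-open intervals $T_i[0,1)$, one of the words $vj,v'j'$ extends the other. The extending string $z$ then satisfies $T_z(0)=0$. By uniqueness of the symbol fixing $0$, this forces $z=1^k$. Both words end in a symbol different from $1$, so $k=0$ and $vj=v'j'$, i.e. $(v,j)=(v',j')$.

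\textbf{Main obstacle.} The only delicate point is the ``one word extends the other'' claim used in Steps 2 and 3. When $T_u(0)=T_w(0)$, one follows the first differing letter $u_i\neq w_i$. The two points then lie in the disjoint half-open intervals $T_{u_1\dots u_{i-1}}T_{u_i}[0,1)$ and $T_{u_1\dots u_{i-1}}T_{w_i}[0,1)$, since $T_{u_i\cdots}(0)\in T_{u_i}[0,1)$. This is a contradiction, so no differing letter exists. I will cite this argument from Lemma \ref{2XLambdaCardinality} rather than redo it; the rest is bookkeeping.
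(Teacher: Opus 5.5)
Your proposal is correct and follows essentially the same route as the paper: split off the point $0$, show that every other point of $\widehat{X}_\lambda$ has a unique representation $T_{vj}(0)$ with $v\in A_\lambda$ and $j\in\mathcal{I}\setminus\{1\}$, and count. You also supply details the paper leaves implicit. These are the prefix-closedness of $A_\lambda$ (needed to strip trailing $1$'s), the fact that $T_{vj}(0)\neq 0$ for $j\neq 1$, and injectivity via the prefix-extension argument of Lemma \ref{2XLambdaCardinality}.
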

\begin{proof}
Every point of $\widehat{X}_\lambda$ is of the form $T_{vj}(0)$ with $v\in A_\lambda$ and $j\in \mathcal{I}$. Since the words ending with $1$ do not give new left endpoints, each point of $\widehat{X}_\lambda \setminus \{0\}$ has a unique representation of the form $T_{vj}(0)$, with $j\in \mathcal{I}\setminus \{1\}$. Therefore we conclude \eqref{3.2.1.RecoverCasoFinito}.
\end{proof}

Fix $v\in \Sigma_*\setminus \emptyset$.

\begin{cor}
Let $0 < \lambda \leq \alpha_v$. Then
\begin{equation}
    \#(\widehat{X}_{\lambda} \cap T_v[0,1)) = \widetilde{N}_v(\lambda/\alpha_v, \emptyset) \cdot (\#\mathcal{I} - 1)+ 1.
\end{equation}
\end{cor}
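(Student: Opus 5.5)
The plan is to mimic the proof of the corollary giving $\#\widehat{X}_\lambda=\#A_\lambda(\#\mathcal{I}-1)+1$, localized to the cylinder $T_v[0,1)$, and then to invoke Lemma \ref{3.3.2Almabda_v}. That lemma gives $\#A^v_\lambda=\widetilde{N}_v(\lambda/\alpha_v,\emptyset)$ for $0<\lambda\le\alpha_v$. So it suffices to prove the purely combinatorial identity
$$\#(\widehat{X}_\lambda\cap T_v[0,1))=\#A^v_\lambda\,(\#\mathcal{I}-1)+1 .$$
I will show that the map
$$\Phi\colon (A^v_\lambda\times(\mathcal{I}\setminus\{1\}))\sqcup\{\ast\}\to \widehat{X}_\lambda\cap T_v[0,1),\qquad (u,j)\mapsto T_{vuj}(0),\quad \ast\mapsto T_v(0),$$
is a bijection.

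\emph{Well-definedness.} If $u\in A^v_\lambda$, then $vu\in A_\lambda$, so $T_{vuj}(0)\in\widehat{X}_\lambda$. Also $T_{vuj}(0)\in T_v[0,1)$, since $T_{uj}(0)\in[0,1)$. For $\ast$, we have $v\in A_\lambda$ because $\lambda\le\alpha_v$, and $T_v(0)=T_{v1}(0)\in\widehat{X}_\lambda$.

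\emph{Injectivity.} This follows as in Lemma \ref{2XLambdaCardinality}. Suppose $T_{vuj}(0)=T_{vu'j'}(0)$. Injectivity of $T_v$ gives $T_{uj}(0)=T_{u'j'}(0)$. By disjointness of the half-open cylinders, one of the words $uj$, $u'j'$ extends the other by a string $1^k$. Since neither ends in $1$, this forces $k=0$, so $(u,j)=(u',j')$. Similarly, $T_{vuj}(0)\neq T_v(0)$: otherwise $T_{uj}(0)=0$, which forces $uj=1^k$, contradicting $j\ne1$.

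\emph{Surjectivity.} This is the step needing the most care. Let $z=T_{wj}(0)\in T_v[0,1)$ with $w\in A_\lambda$. Delete all trailing $1$'s from $wj$ to obtain a word $w'$ with $T_{w'}(0)=z$, where $w'$ is empty or does not end in $1$. In either case $w'$ with its last letter removed is a prefix of $w$. Since $\alpha$ decreases along extensions, $A_\lambda$ is prefix-closed, so that prefix lies in $A_\lambda$. There are two cases.

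\begin{itemize}
\item If $|w'|\ge|v|$, then $z=T_{w'}(0)$ lies in $T_{w'_1\cdots w'_{|v|}}[0,1)$. Disjointness of the half-open cylinders of level $|v|$ forces $w'_1\cdots w'_{|v|}=v$. Hence $w'=vuj'$ with $j'\ne1$ and $vu\in A_\lambda$, that is, $u\in A^v_\lambda$.
\item If $|w'|<|v|$, then $z=T_{w'1^k}(0)$, where $k=|v|-|w'|$. The same disjointness argument gives $v=w'1^k$, so $z=T_v(0)=\Phi(\ast)$.
\end{itemize}

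I expect the main obstacle to be exactly this bookkeeping with trailing $1$'s and prefixes, in particular checking that the shortened word still has its stem in $A_\lambda$. Everything else is inherited from Lemmas \ref{2XLambdaCardinality}, \ref{3.2MedidaCilindro} and \ref{3.3.2Almabda_v}.
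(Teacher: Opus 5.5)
Your proposal is correct and takes the same route as the paper: the paper asserts that the points of $\widehat{X}_\lambda\cap T_v[0,1)$ other than $T_v(0)$ are in bijection with $\{vuj: vu\in A_\lambda,\ j\in\mathcal{I}\setminus\{1\}\}$ and then applies Lemma \ref{3.3.2Almabda_v}, and your injectivity and surjectivity checks supply the details the paper leaves out. One small slip: in your first surjectivity case, the subcase $|w'|=|v|$ forces $w'=v$, so $z=T_v(0)=\Phi(\ast)$; this subcase should be handled with your second case, since $w'$ cannot then be written as $vuj'$.
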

\begin{proof}
The points of $\widehat{X}_\lambda \cap T_v[0,1)$ except for $T_v(0)$, are in bijection with 
$$\{vwj: vw\in A_\lambda, j\in \mathcal{I}\setminus \{1\}\},$$
plus the point $T_v(0)$.

\end{proof}

\begin{cor}
For all $0 < \lambda \leq \alpha_v$,
$$\widehat{\mu_\lambda}(T_v[0,1)) = \frac{\widetilde{N}_v(\lambda/ \alpha_v, \emptyset)\cdot(\#\mathcal{I} - 1) + 1}{\# \widehat{X_\lambda}}.$$
\end{cor}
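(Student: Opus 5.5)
This is a direct consequence of the two preceding corollaries. The plan is to write $\widehat{\mu}_\lambda(T_v[0,1)) = \#(\widehat{X}_\lambda\cap T_v[0,1))/\#\widehat{X}_\lambda$ by the definition of $\widehat{\mu}_\lambda$, and then substitute the numerator from the preceding corollary. That corollary gives $\#(\widehat{X}_\lambda\cap T_v[0,1)) = \widetilde{N}_v(\lambda/\alpha_v,\emptyset)(\#\mathcal{I}-1)+1$ for $0<\lambda\le\alpha_v$. The hypothesis $\lambda\le\alpha_v$ is exactly what places $v\in A_\lambda$, so $\emptyset\in A^v_\lambda$ and the counts are valid.

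Since that corollary's proof is only sketched, I would also verify its count.

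\emph{The ``$+1$'' term.} The point $T_v(0)$ lies in $\widehat{X}_\lambda$: writing $v=v'j$ with $v'\in A_\lambda$ (since $\alpha_{v'}\ge\alpha_v\ge\lambda$), it is $T_{v'j}(0)$.

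\emph{The remaining points.} Every other point of $\widehat{X}_\lambda\cap T_v[0,1)$ has the form $T_{wj}(0)$ with $w\in A_\lambda$ and $j\neq 1$. By the disjointness of the half-open cylinders, as in Lemma~\ref{3.2MedidaCilindro}, $w$ must extend $v$, so $w=vu$ with $u\in A^v_\lambda$.

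\emph{Injectivity.} Representations are unique once trailing $1$'s are stripped, exactly as in the proof of the corollary giving \eqref{3.2.1.RecoverCasoFinito}.

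\emph{The main term.} Counting these gives $\#A^v_\lambda(\#\mathcal{I}-1)$, and Lemma~\ref{3.3.2Almabda_v} converts $\#A^v_\lambda$ into $\widetilde{N}_v(\lambda/\alpha_v,\emptyset)$.

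The only point needing care is that $T_v(0)$ is not double counted. It would coincide with some $T_{vuj}(0)$, $j\ne1$, only if $T_{uj}(0)=0$, which is impossible since only the symbol $1$ fixes $0$.

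The denominator needs no further work; it can be left as $\#\widehat{X}_\lambda$, or expanded via \eqref{3.2.1.RecoverCasoFinito} as $\#A_\lambda(\#\mathcal{I}-1)+1$ if that is convenient later.
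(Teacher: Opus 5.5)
Your proposal is correct and matches the paper's route. The corollary is simply the preceding count $\#(\widehat{X}_\lambda\cap T_v[0,1))=\widetilde{N}_v(\lambda/\alpha_v,\emptyset)(\#\mathcal{I}-1)+1$ divided by $\#\widehat{X}_\lambda$. Your check of that count, namely the point $T_v(0)$ plus the bijection with $\{vuj: u\in A^v_\lambda,\ j\neq 1\}$ converted via Lemma~\ref{3.3.2Almabda_v}, is the paper's own sketch, with the non-double-counting of $T_v(0)$ made explicit.
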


The same renewal argument now gives 
$$\lim_{\lambda \to 0^+} \widehat{\mu}_\lambda(T_v[0,1)) = \alpha_v.$$

\section{Comments on the lattice case}\label{SectionLattice}
In this case, we will restrict our attention to the case where $\mathcal{I}$ is finite. Let $\gamma$ be lattice regular potential with $P(-\gamma) = 0$.

\begin{teo}[{\cite[Corollary 3.6]{KombrinkRenewalGeneral}}]\label{3.4.2RenewalLatticeNormal}
Let $\gamma \in \mathcal{F}_{\rho}(\Sigma)$ be lattice function and let $\zeta, \psi\in \mathcal{F}_\rho(\Sigma)$ for some $\rho\in (0, 1)$ such that 
\begin{equation}\label{2Cohomologes}
    \gamma - \zeta = \psi - \psi \circ \sigma,
\end{equation}
and $\zeta(\Sigma) \subset a\Z$ for some $a> 0$ and it is not contained in any proper subgroup of $a\Z$. Then, for $\theta\in \R$ we have
\begin{align*}
    \lim_{n \to\infty} e^{-an} N(an + \theta - \psi(\underline{x}), \underline{x}) &= \frac{a \cdot h_{-\zeta}(\underline{x})}{\int \zeta \,d\mu_{-\zeta}} \cdot \int_{\Sigma} \sum_{\ell = -\infty}^\infty e^{-a\ell} 1\{a\ell + \theta -\psi(\underline{y}) \geq 0\}\,d\nu_{-\zeta}(\underline{y})\\
    &= \frac{a \cdot h_{-\zeta}(\underline{x})}{\int \zeta\,d\mu_{-\zeta}\cdot (1 - e^{-a})} \cdot \int_{\Sigma} e^{-a\lceil (\psi(\underline{y}) - \theta)/ a \rceil}\,d\nu_{-\zeta}(\underline{y})\\
    &=\frac{a \cdot h_{-\zeta}(\underline{x})}{\int \zeta\,d\mu_{-\zeta}\cdot (1 - e^{-a})} \cdot \int_{\Sigma} e^{a\lfloor (\theta - \psi(\underline{y}))/ a \rfloor}\,d\nu_{-\zeta}(\underline{y}). 
\end{align*}
\end{teo}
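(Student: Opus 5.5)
The plan is to reduce the counting function for $\gamma$ to the counting function for the cohomologous potential $\zeta$, for which the Birkhoff sums take values in the discrete group $a\Z$, and then use the lattice renewal theorem for $\zeta$.

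\textbf{Step 1: reduction to $\zeta$.} Telescoping gives
$$S_n\gamma(\underline{y}) = S_n\zeta(\underline{y}) + \psi(\underline{y}) - \psi(\sigma^n\underline{y}).$$
For $\sigma^n\underline{y}=\underline{x}$ this reads $S_n\gamma(\underline{y})=S_n\zeta(\underline{y})+\psi(\underline{y})-\psi(\underline{x})$. Hence
$$N(an+\theta-\psi(\underline{x}),\underline{x})=\sum_{k\ge 0}\sum_{\sigma^k\underline{y}=\underline{x}}1\{S_k\zeta(\underline{y})+\psi(\underline{y})\le an+\theta\}.$$
The shift by $\psi(\underline{x})$ in the statement is exactly what removes the $\underline{x}$-dependence from the threshold. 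Pressure, the eigenmeasure $\nu_{-\zeta}$ and $h_{-\zeta}$ are related to those of $\gamma$ through \eqref{1.2EqCohomologus}.

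\textbf{Step 2: decomposition by the terminal value.} Since $S_k\zeta\in a\Z$, I write $S_k\zeta(\underline{y})=a m$ with $m\in\Z$. The condition becomes
$$\psi(\underline{y})\le a(n-m)+\theta.$$
Set $\ell=n-m$. The sum then equals
$$\sum_{\ell}\sum_{k}\sum_{\sigma^k\underline{y}=\underline{x}}1\{S_k\zeta(\underline{y})=a(n-\ell)\}\,1\{a\ell+\theta-\psi(\underline{y})\ge 0\}.$$
To control the inner sum I will use the local (lattice) renewal asymptotic for $\zeta$. For a Hölder observable $g$,
$$\sum_k\sum_{\sigma^k\underline{y}=\underline{x}}1\{S_k\zeta(\underline{y})=am\}\,g(\underline{y})\sim \frac{a\,e^{am}\,h_{-\zeta}(\underline{x})}{\int\zeta\,d\mu_{-\zeta}}\int g\,d\nu_{-\zeta}$$
as $m\to\infty$. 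This is proved with the Ruelle operator $\mathcal{L}_{-\zeta+is\zeta}$ for $s\in[0,2\pi/a)$. The condition that $\zeta$ is not supported on a proper subgroup of $a\Z$ makes $s=0$ the only peripheral eigenvalue. A Fourier inversion over the circle $\R/(2\pi/a)\Z$, together with Theorem \ref{1RuellesTheorem}(iv) and a perturbation expansion of the leading eigenvalue near $s=0$ (derivative $\int\zeta\,d\mu_{-\zeta}$), gives the asymptotic.

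\textbf{Step 3: applying it to the indicator.} The observable $g=1\{a\ell+\theta-\psi\ge0\}$ is not Hölder. I handle this by sandwiching it between Hölder approximations that depend on finitely many coordinates, using Lemma \ref{2RenewalLemmaEpsilon}-type monotonicity. I also need $\nu_{-\zeta}(\{\psi=a\ell+\theta\})$ to be negligible, or I accept one-sided limits. For $\psi$ depending on finitely many coordinates, the condition $\psi\le a\ell+\theta$ is a finite union of cylinders, which avoids the issue.

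\textbf{Step 4: summation over $\ell$.} Multiplying by $e^{-an}$ gives the factor $e^{-a\ell}$. Summing over $\ell$ produces the first formula, and dominated convergence is justified because $\psi$ is bounded, so only $\ell\ge -C$ contribute and $\sum e^{-a\ell}$ converges there. The remaining equalities are a geometric series: for fixed $\underline{y}$, the admissible $\ell$ are those with $\ell\ge\lceil(\psi(\underline{y})-\theta)/a\rceil$, and
$$\sum_{\ell\ge L}e^{-a\ell}=\frac{e^{-aL}}{1-e^{-a}}.$$
The last form follows from $-\lceil u\rceil=\lfloor -u\rfloor$.

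The main obstacle is Step 3: the uniform local lattice renewal estimate with a discontinuous weight. I would first try to cite or replicate Lalley's lattice argument in \cite{Lalley}, adapted as in \cite{KombrinkRenewalGeneral}.
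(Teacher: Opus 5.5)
The paper gives no proof of this statement; it imports it from Kombrink's Corollary 3.6. Your outline is the standard route behind Lalley's and Kombrink's lattice theorems: telescope to $\zeta$, split by lattice level $S_k\zeta=am$, prove a local renewal theorem via $\mathcal{L}_{-\zeta+is\zeta}$, then sum the geometric series. Steps 1 and 4 are fine; in Step 4 the domination also needs $\sup_m e^{-am}\cdot(\text{count at level }m)<\infty$, which your local theorem with $g\equiv 1$ supplies.

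\textbf{Step 2 has a genuine gap.} First, the spectral property you state is not the right one. Peripheral eigenvalues for $s\neq 0$ do occur and are harmless: for $\zeta\equiv a$ one has $\mathcal{L}_{-\zeta+is\zeta}=e^{isa}\mathcal{L}_{-\zeta}$ for every $s$. What the Fourier/Tauberian argument needs is that $1$ is not an eigenvalue of $\mathcal{L}_{-\zeta+is\zeta}$ for $0<s<2\pi/a$.

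Second, and more seriously, the pointwise hypothesis ``$\zeta(\Sigma)$ is not contained in a proper subgroup of $a\Z$'' does not give even this, because it is not invariant under cohomology. On the full shift on $\{1,2\}$ take $a=\tfrac12\log 2$, $\zeta=2a+a1_{[1]}-a1_{[1]}\circ\sigma$ (cohomologous to the constant $2a$), $\gamma=\zeta$ and $\psi=0$.
\begin{itemize}
\item The hypothesis holds: $\zeta(\Sigma)=\{a,2a,3a\}$ and $P(-\gamma)=0$.
\item The spectral property fails: $e^{i\pi\zeta/a}=u\circ\sigma/u$ with $u=e^{-i\pi 1_{[1]}}$, so $u\,h_{-\zeta}$ is a fixed vector of $\mathcal{L}_{-\zeta+i(\pi/a)\zeta}$.
\item The conclusion fails: for $x_1=2$ one has $S_k\gamma(w\underline x)=a(2k+1\{w_1=1\})$. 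Hence $N(2pa,\underline x)=2^p+2^{p-1}-1$ and $N((2p+1)a,\underline x)=2^{p+1}-1$, so $e^{-an}N(an,\underline x)$ tends to $3/2$ along even $n$ and to $\sqrt2$ along odd $n$.
\end{itemize}
So the statement as quoted is false. You must read the hypothesis as ``$a$ is the maximal span: $\zeta$ is not cohomologous to a $b\Z$-valued function for any $b>a$'', and derive the spectral input from it. Suppose $\mathcal{L}_{-\zeta+is\zeta}w=w$. Comparing with $\mathcal{L}_{-\zeta}h_{-\zeta}=h_{-\zeta}$ forces $|w|=c\,h_{-\zeta}$ and $e^{is\zeta}=u\circ\sigma/u$ with $u=w/|w|$. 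Since $\Sigma$ is compact and totally disconnected, $u=e^{iv}$ with $v$ continuous. Then $\zeta$ is cohomologous to a $(2\pi/s)\Z$-valued function with $2\pi/s>a$, a contradiction.

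\textbf{Step 3.} You diagnosed the problem correctly, but ``accepting one-sided limits'' does not prove the displayed equality. No argument can, without an extra hypothesis. The Lipschitz sandwich only traps the $\ell$-th term between the values computed with $\nu_{-\zeta}(\psi<a\ell+\theta)$ and with $\nu_{-\zeta}(\psi\le a\ell+\theta)$, and the gap can be real. Take $\zeta\equiv\log 2$ on $\{1,2\}^{\N}$ and
$$K=\{\underline y:(y_{j^2},\dots,y_{j^2+j})\neq(1,\dots,1)\ \forall j\ge 1\}.$$
This set is closed with $\nu_{-\zeta}(K)\ge 1/2$, but it contains no sequence ending in $1^\infty$. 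Let $\psi=\eta\,d(\cdot,K)$ with $0<\eta<\log 2$ and $d$ the usual ultrametric. At $\underline x=1^\infty$ and $\theta=0$, the $\ell=0$ term is identically zero, while the formula assigns it $\nu_{-\zeta}(K)$. So the statement needs a hypothesis such as $\nu_{-\zeta}(\{\psi\in\theta-a\Z\})=0$, or $\psi$ locally constant as you note. This is exactly the hypothesis the paper adds in Theorem \ref{3.4.2LatticeForFiniteWords}, and under it your sandwich argument closes Step 3.
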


Using the same strategy as in the proof of Theorem \ref{2RenewalTheoremNLFiniteWords}, we obtain the following lattice analogue.
\begin{teo}\label{3.4.2LatticeForFiniteWords}
Let $\gamma, \psi, \zeta \in \mathcal{F}_\rho(\Sigma \cup \Sigma_*)$ for some $\rho \in (0,1)$ satisfying \eqref{2Cohomologes} in $\Sigma \cup \Sigma_*$. Let $\theta \in \R$ be such that 
$$\nu_{-\zeta}(\{\underline{y} \in \Sigma:  \psi(\underline{y})\in \theta -a\Z\}) = 0.$$
Then for $x\in \Sigma \cup \Sigma_*$
\begin{align*}
   \lim_{n \to\infty} e^{-an} N_*(an + \theta - \psi({x}), {x}) &= \frac{a \cdot h_{*}(x)}{\int \zeta\,d\mu_{-\zeta}\cdot (1 - e^{-a})} \cdot \int_{\Sigma} e^{-a\lceil (\psi(y) - \theta)/ a \rceil}\,d\nu_{-\zeta}(y)\\
    &=\frac{a \cdot h_{*}(x)}{\int \zeta\,d\mu_{-\zeta}\cdot (1 - e^{-a})} \cdot \int_{\Sigma} e^{a\lfloor (\theta - \psi(y))/ a \rfloor}\,d\nu_{-\zeta}(y), 
\end{align*}
where $h_*$ is the unique positive continuous function satisfying $h_*|_\Sigma = h_{-\zeta}$ and, for $x\in \Sigma_*$
$$h_*(x) = \sum_{\substack{y : \sigma_* y= x\\  y \neq \emptyset}} e^{-\zeta(y)} h_*(y).$$
\end{teo}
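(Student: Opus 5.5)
We mirror the proof of Theorem \ref{2RenewalTheoremNLFiniteWords}, with Theorem \ref{3.4.2RenewalLatticeNormal} in place of the nonlattice renewal theorem. For $x\in\Sigma$ there is nothing to prove, since $N_*=N$ there and the statement is Theorem \ref{3.4.2RenewalLatticeNormal}. So fix $x\in\Sigma_*$ and write $t_n:=an+\theta-\psi(x)$.

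\textbf{Step 1: iterate the renewal equation.} For a fixed $m$, iterating the renewal equation $m$ times gives, as in \eqref{2RenewalExpanded},
\[
N_*(t_n,x)=\sum_{\substack{\sigma_*^m y=x\\ \sigma_*^{m-1}y\neq\emptyset}} N_*\bigl(t_n-S_m\gamma(y),y\bigr)+R_m(t_n,x).
\]
The remainder $R_m$ collects the finitely many indicator sums of length $<m$. As in \eqref{2RTRestosA0Fixing}, $e^{-t_n}R_m(t_n,x)\to0$, so $e^{-an}R_m\to0$; this uses finiteness of the alphabet and dominated convergence.

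\textbf{Step 2: change coordinates and compare with infinite sequences.} By \eqref{2Cohomologes} on $\Sigma\cup\Sigma_*$ we have $S_m\gamma(y)=S_m\zeta(y)+\psi(y)-\psi(x)$, and hence
\[
t_n-S_m\gamma(y)=a\bigl(n-S_m\zeta(y)/a\bigr)+\theta-\psi(y).
\]
Here $S_m\zeta(y)\in a\Z$, so each term has exactly the form $N_*(an'+\theta-\psi(y),y)$ with $n'=n-S_m\zeta(y)/a$. Each such $y$ has length $\ge m$. Choose $y'\in\Sigma$ agreeing with $y$ on its first $m$ symbols. Lemma \ref{2RenewalLemmaEpsilon} then sandwiches
\[
N(t-\varepsilon,y')\le N_*(t,y)\le N(t+\varepsilon,y').
\]
Write $t-\varepsilon=an'+(\theta-\varepsilon+\psi(y')-\psi(y))-\psi(y')$, and similarly for $t+\varepsilon$. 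Since $|\psi(y)-\psi(y')|\le\|\psi\|_\rho\rho^m$, we can apply Theorem \ref{3.4.2RenewalLatticeNormal} at $y'$ with the shifted parameters $\theta\pm\delta_m$, where $\delta_m=\varepsilon+\|\psi\|_\rho\rho^m$. Multiplying by $e^{-an}=e^{-S_m\zeta(y)}e^{-an'}$ gives
\[
\limsup_{n\to\infty},\ \liminf_{n\to\infty} e^{-an}N_*(t_n,x)\ \text{ lie between }\ \sum_{y} e^{-S_m\zeta(y)}\,h_{-\zeta}(y')\,\frac{a\,G(\theta\pm\delta_m)}{\int\zeta\,d\mu_{-\zeta}\,(1-e^{-a})},
\]
where
\[
G(\theta'):=\int e^{a\lfloor(\theta'-\psi)/a\rfloor}\,d\nu_{-\zeta}.
\]

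\textbf{Step 3: pass to the limit.} Let $\varepsilon\to0$ and $m\to\infty$. On the $h$-part, $h_{-\zeta}(y')-h_*(y)=O(\rho^m)$, and the sum $\sum_y e^{-S_m\zeta(y)}h_*(y)$ equals $h_*(x)$ by iterating the defining relation of $h_*$. Uniqueness and existence of $h_*$ follow as in the lemma preceding Lemma \ref{2RenewalLemmaEpsilon}.

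This last step is the main obstacle: we need $G(\theta\pm\delta)\to G(\theta)$ as $\delta\to0$. The integrand $e^{a\lfloor(\theta'-\psi(y))/a\rfloor}$ is bounded, and it is continuous in $\theta'$ at $\theta'=\theta$ for every $y$ with $\psi(y)\notin\theta-a\Z$. By hypothesis these are $\nu_{-\zeta}$-almost all $y$. Dominated convergence therefore gives $G(\theta\pm\delta)\to G(\theta)$. Upper and lower bounds coincide, which yields the stated limit. The ceiling form then follows from $-\lceil s\rceil=\lfloor -s\rfloor$.

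One care point is the uniformity in $y'$ of Theorem \ref{3.4.2RenewalLatticeNormal} needed to interchange the limit with the finite sum over $y$. With a finite alphabet this sum is finite, so pointwise convergence suffices. The shifts $\theta\pm\delta_m$ vary with $y$ only through the bounded quantity $\psi(y')-\psi(y)$, and it is enough to apply the theorem at the fixed extreme values $\theta\pm\delta_m$ using monotonicity of $N$ in $t$.
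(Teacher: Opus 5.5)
Your proposal is correct and is essentially the paper's proof. You iterate the renewal equation and discard the finitely many remainder terms. You sandwich each surviving term between values of $N$ at an infinite extension $y'$ via the $\varepsilon$-closeness lemma, and use the cohomology identity with $S_m\zeta(y)\in a\Z$ to rewrite the argument as $an'+\theta'-\psi(y')$. You then apply the lattice renewal theorem and conclude with continuity of $\theta'\mapsto\int e^{a\lfloor(\theta'-\psi)/a\rfloor}\,d\nu_{-\zeta}$ at $\theta$ (the no-atom hypothesis) together with the defining relation of $h_*$.

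Your explicit shift $\delta_m=\varepsilon+\|\psi\|_\rho\rho^m$, combined with monotonicity of $N$ in $t$, handles the discrepancy $\psi(y)-\psi(y')$ more carefully than the paper does. One small inaccuracy: the lemma you cite gives only uniqueness of $h_*$, not its existence, though the paper likewise takes existence for granted.
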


To prove this we need to assume the following propositions proved in \cite{Lalley}.

\begin{lem}[{\cite[Lemma 6.1]{Lalley}}]
There is at most one nonnegative, continuous $h_*(x)$ on $\Sigma \cup \Sigma_*$ such that $h_*|_\Sigma = h_{-\zeta}$ and for $x\in \Sigma_*$
$$h_*(x) = \sum_{\substack{y: \sigma_*y = x\\ y\neq \emptyset}} e^{-\zeta(y)} h_*(y).$$
\end{lem}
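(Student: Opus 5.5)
The plan is to mimic the uniqueness argument already given for the nonlattice analogue earlier in this section, now with $\zeta$ in place of $\gamma$ and with $\mathcal{I}$ finite. Let $h_*$ and $h_*'$ be two nonnegative continuous functions on $\Sigma\cup\Sigma_*$ that agree with $h_{-\zeta}$ on $\Sigma$ and satisfy the recursion on $\Sigma_*$.

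The first step is positivity. By Theorem \ref{1RuellesTheorem}(ii), $h_{-\zeta}\ge R>0$ on $\Sigma$. Since $\Sigma\cup\Sigma_*$ is compact when $\mathcal{I}$ is finite, $h_*$ is uniformly continuous. A finite word $v$ with $|v|$ large is close to any $\underline v\in[v]\subset\Sigma$ in the metric of $(\mathcal{I}\cup\{0\})^\N$. Hence $h_*(v)\ge R/2$ once $|v|\ge m_0$.

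For an arbitrary $x\in\Sigma_*$, iterate the recursion $m_0$ times:
$$h_*(x)=\sum e^{-S_{m_0}\zeta(y)}h_*(y),$$
where the sum runs over nonempty extensions $y$ with $\sigma_*^{m_0}y=x$. These $y$ have length at least $m_0$, the sum is nonempty and finite, and every term is positive. So $h_*>0$ on $\Sigma_*$, and the same holds for $h_*'$.

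The second step builds a Markov kernel. For $\sigma_*y=x$ with $y\neq\emptyset$, set
$$k(x,y)=e^{-\zeta(y)}\,\frac{h_*(y)}{h_*(x)},\qquad\text{and } k(x,y)=0 \text{ otherwise}.$$
The recursion says exactly that $\sum_y k(x,y)=1$. Define the iterates $k^n$ by composition; by induction they are also stochastic and supported on the $n$-th preimages $y$ with $\sigma_*^{n-1}y\neq\emptyset$. Dividing the recursion for $h_*'$ by $h_*(x)$ and iterating gives
$$\frac{h_*'(x)}{h_*(x)}=\sum_{y}k^n(x,y)\,\frac{h_*'(y)}{h_*(y)}.$$
So $h_*'/h_*$ at $x$ is a convex combination of its values at words of length at least $n$.

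The last step, which is the main point, is to show that $h_*'(y)/h_*(y)\to1$ uniformly as $|y|\to\infty$. This is where continuity replaces the Hölder estimate used earlier. Both functions are uniformly continuous on the compact space and coincide on $\Sigma$. Every finite word $y$ of length $n$ is within distance $2^{-n}$ (or $\rho^n$, depending on the metric) of the point $\underline y\in[y]\subset\Sigma$. Hence
$$|h_*'(y)-h_*(y)|\le|h_*'(y)-h_{-\zeta}(\underline y)|+|h_{-\zeta}(\underline y)-h_*(y)|\to0$$
uniformly in $|y|=n$. Combined with $h_*\ge R/2$ on long words, this gives the uniform convergence of the ratio to $1$.

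Plugging this into the convex-combination identity and letting $n\to\infty$ yields $h_*'(x)/h_*(x)=1$ for each $x\in\Sigma_*$. Together with agreement on $\Sigma$, we get $h_*'=h_*$. The only care needed is that the compactness supplying uniform continuity requires $\mathcal{I}$ finite, which is the standing assumption of this section.
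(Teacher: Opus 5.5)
Your proof is correct and takes essentially the paper's approach: the paper only cites Lalley for this lattice-section statement, but your positivity step, the stochastic kernel $k(x,y)=e^{-\zeta(y)}h_*(y)/h_*(x)$, and the convex-combination identity for $h_*'/h_*$ reproduce the argument the paper gives for its earlier uniqueness lemma in the nonlattice setting. Your one change is to replace the H\"older bound $|h_*(v)-h_{-\gamma}(\underline v)|\le\rho^{|v|}|h_*|_\rho$ by uniform continuity on the compact space $\Sigma\cup\Sigma_*$ (finite $\mathcal{I}$), and this is the right adjustment, since the statement assumes only continuity.
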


\begin{lem}[{\cite[Lemma 6.2]{Lalley}}]\label{3.4.2LemmaLalleyCercano}
For each $\varepsilon > 0$ there exists $n_\varepsilon$ sufficiently large that if $x,x' \in \Sigma \cup \Sigma_*$ satisfy $x_i = x_i'$ for $i = 0, 1,\ldots, n_\varepsilon$ then
$$N_*(t,x) \leq N_*(t + \varepsilon, x').$$
\end{lem}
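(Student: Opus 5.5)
Following Lemma \ref{2RenewalLemmaEpsilon}, I would prove the statement by comparing Birkhoff sums of preimages of $x$ and $x'$ branch by branch, using the Hölder continuity of $\gamma$ on $\Sigma\cup\Sigma_*$ (here with $\gamma$ replaced by the potential defining $N_*$, which is Hölder since $\gamma,\zeta,\psi\in\mathcal{F}_\rho(\Sigma\cup\Sigma_*)$).

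\emph{Choice of $n_\varepsilon$.} Since $\mathrm{var}_m(\gamma)\le \|\gamma\|_\rho\,\rho^m$, the variations are summable. I would fix $n_\varepsilon$ so that
$$\sum_{m\ge n_\varepsilon}\mathrm{var}_m(\gamma)\le \|\gamma\|_\rho\frac{\rho^{n_\varepsilon}}{1-\rho}<\varepsilon .$$

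\emph{Bijection of preimages.} Assume $x_i=x'_i$ for $i\le n_\varepsilon$. For each $n\ge 1$, I would pair the $n$-th preimages of $x$ and of $x'$. A preimage $y$ with $\sigma_*^n y=x$ has the form $y=wx$ with $w\in\mathcal{I}^n$, and I would map it to $y'=wx'$. The side condition $\sigma_*^{n-1}y\neq\emptyset$ is preserved under this map. If $x=\emptyset$, then $x$ agreeing with $x'$ on the first $n_\varepsilon\ge1$ coordinates (where $\emptyset$ is read as $000\ldots$) forces $x'=\emptyset$ in the embedding into $(\mathcal{I}\cup\{0\})^\N$, and there is nothing to prove. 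So the map is a bijection between the index sets of the two sums defining $N_*$. The constant term $1\{0\le t\}$ is also bounded by $1\{0\le t+\varepsilon\}$; the same holds when $x\in\Sigma$, where $N_*=N$.

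\emph{Comparison of Birkhoff sums.} For $0\le j<n$, the points $\sigma_*^j y$ and $\sigma_*^j y'$ agree on their first $n-j+n_\varepsilon$ coordinates. Hence
$$|S_n\gamma(y)-S_n\gamma(y')|\le\sum_{j=0}^{n-1}\mathrm{var}_{n-j+n_\varepsilon}(\gamma)\le\sum_{m> n_\varepsilon}\mathrm{var}_m(\gamma)<\varepsilon .$$
It follows that $1\{S_n\gamma(y)\le t\}\le 1\{S_n\gamma(y')\le t+\varepsilon\}$. Summing over $n$ and over the paired preimages gives $N_*(t,x)\le N_*(t+\varepsilon,x')$. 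These sums are finite by summability of the potential; this is not needed for the inequality when both sides are infinite.

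\emph{Main obstacle.} The only delicate point is the bookkeeping at finite words: the variation of $\gamma$ must be measured on $\Sigma\cup\Sigma_*$ with its metric as a subshift of $(\mathcal{I}\cup\{0\})^\N$. This is exactly why the hypothesis that $\gamma\in\mathcal{F}_\rho(\Sigma\cup\Sigma_*)$, and not only on $\Sigma$, is needed. One also has to check that the condition $\sigma_*^{n-1}y\ne\emptyset$ transfers under the pairing. Once these points are settled, the estimate is uniform in $t$ and in $x,x'$.
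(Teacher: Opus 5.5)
Your argument is correct and is essentially the paper's. For this lattice restatement the paper gives no separate proof and only cites Lalley, but it proves the identical Lemma~\ref{2RenewalLemmaEpsilon} by your method: pairing $n$-th preimages $wx \leftrightarrow wx'$ and bounding $|S_n\gamma(wx)-S_n\gamma(wx')|$ by the tail $\sum_{m\ge n_\varepsilon}\text{var}_m(\gamma)<\varepsilon$, with your version tracking the variation index $n-j+n_\varepsilon$ and the case $x=\emptyset$ more explicitly.
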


\begin{lem}\label{3.4.2ContinuityLatticeProfile}
Let $\theta \in \R$ be such that
$$\nu_{-\zeta}(\{\underline{y} \in \Sigma:  \psi(\underline{y}) \in \theta - a\Z\}) = 0.$$
Define the function $C_\psi \colon \R \to \R$ by
$$C_\psi(x) = \int_\Sigma e^{a\lfloor(x - \psi(\underline{y}))/a\rfloor}\, d\nu_{-\zeta}(\underline{y}).$$
Then $C_\psi$ is continuous at $\theta$.
\end{lem}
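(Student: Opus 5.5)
We will prove continuity of $C_\psi$ at $\theta$ by dominated convergence applied to the integrand
$$g_x(\underline{y}) := e^{a\lfloor (x-\psi(\underline{y}))/a\rfloor}.$$
Fix a sequence $x_k\to\theta$. It suffices to show two things: that $g_{x_k}(\underline{y})\to g_\theta(\underline{y})$ for $\nu_{-\zeta}$-almost every $\underline{y}$, and that the family $g_{x_k}$ is uniformly bounded by a $\nu_{-\zeta}$-integrable function.

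\emph{Pointwise convergence.} The map $s\mapsto \lfloor s/a\rfloor$ is continuous at every $s\notin a\Z$. Hence for fixed $\underline{y}$, the function $x\mapsto g_x(\underline{y})$ is continuous at $x=\theta$ whenever $\theta-\psi(\underline{y})\notin a\Z$, that is, whenever $\psi(\underline{y})\notin \theta-a\Z$. By hypothesis the exceptional set
$$E:=\{\underline{y}\in\Sigma : \psi(\underline{y})\in\theta-a\Z\}$$
has $\nu_{-\zeta}$-measure zero. Therefore $g_{x_k}\to g_\theta$ on $\Sigma\setminus E$, i.e.\ $\nu_{-\zeta}$-almost everywhere. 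We should note that $E$ is a Borel set, being the preimage of the closed set $\theta-a\Z$ under the continuous map $\psi$, so that its measure makes sense.

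\emph{Domination.} Since $\lfloor s\rfloor\le s$, we have $0< g_x(\underline{y})\le e^{x-\psi(\underline{y})}$. For $|x_k-\theta|\le 1$ this gives
$$g_{x_k}(\underline{y})\le e^{\theta+1}e^{-\psi(\underline{y})}.$$
Here $\psi\in\mathcal{F}_\rho(\Sigma)$ and $\mathcal{I}$ is finite in this section, so $\psi$ is a continuous function on the compact space $\Sigma$. It is therefore bounded, and the dominating function is a constant, which is integrable against the probability measure $\nu_{-\zeta}$. Dominated convergence then yields $C_\psi(x_k)\to C_\psi(\theta)$, and since the sequence $x_k\to\theta$ was arbitrary, $C_\psi$ is continuous at $\theta$.

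The only point needing care is boundedness of $\psi$. If one prefers not to rely on compactness, it suffices to assume $\psi\in\mathcal{F}^b_\rho$, which is the class in which cohomology was defined earlier. In either case there is no real obstacle: the hypothesis on $\theta$ exactly removes the jump set of the floor function.
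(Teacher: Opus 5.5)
Your proposal is correct and follows the same route as the paper: the floor term converges pointwise off the $\nu_{-\zeta}$-null set $\{\psi\in\theta-a\Z\}$, the integrands are dominated by a constant because $\psi$ is bounded on the compact finite-alphabet shift, and dominated convergence finishes the proof. Your explicit bound $e^{a\lfloor (x_k-\psi)/a\rfloor}\le e^{\theta+1}e^{\|\psi\|_\infty}$ and your remark on the measurability of the exceptional set supply details the paper leaves implicit.
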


\begin{proof}
Consider a sequence $\{\theta_m\}_m$ such that $\theta_m \to \theta$. For every $y$ such that $\psi(y) \notin \theta - a\Z$, we have
$$\lfloor(\theta_m-\psi(y))/a\rfloor
\to
\lfloor(\theta-\psi(y))/a\rfloor.$$
By the no-atom assumption, this convergence holds for
$\nu_{-\zeta}$-almost every $y$. Since we are working with finite alphabet and $\psi$ is bounded, the functions
$$e^{a\lfloor(\theta_m-\psi(y))/a\rfloor}$$
are bounded by an integrable constant for all sufficiently large $m$. The result follows from the dominated convergence theorem.
\end{proof}

\begin{proof}[Proof of Theorem \ref{3.4.2LatticeForFiniteWords}]
Consider the renewal equation iteration:
\begin{equation}
N_*(t,x) = \sum_{\substack{y : \sigma_*^{k} y = x\\ \sigma_*^{k - 1} y \neq \emptyset}} N_*(t - S_k\gamma(y), y) + \sum_{m = 1}^{k - 1} \sum_{\substack{y : \sigma_*^m y = x\\ \sigma_*^{m - 1}y \neq \emptyset}}1\{t - S_m \gamma(y) \geq 0\} + 1\{t \geq 0\}.
\end{equation}
Fix $k$ large. For $t > k\|\gamma\|_\infty$ the last two sets of terms are bounded independently of $t$. Hence, after multiplying by $e^{-an}$ it does not contribute to the limit.
Thus, the asymptotic behaviour of $N_*(t, x)$ is completely determined by 
$$\sum_{\substack{y: \sigma_*^k y = x\\ \sigma^{k - 1}_*y \neq \emptyset}} N_*(t - S_k\gamma(y), y).$$
Observe that each $y$ in the preceding expression is a word of length of at least $k$, because $\sigma_*^{k - 1} y \neq \emptyset$. For each such $y$, there exists $y' \in \Sigma$ such that $y_i'  = y_i$ for $i = 0,1, \ldots, k -1$. By Lemma \ref{3.4.2LemmaLalleyCercano}, if $k \geq n_\varepsilon$ then
$$N_*(t - \varepsilon - S_k \gamma(y), y') \leq N_*(t - S_k\gamma(y), y) \leq N_*(t + \varepsilon - S_k\gamma(y), y').$$
Let 
$$t_n = an + \theta - \psi(x).$$
By \eqref{2Cohomologes}, 
$$\gamma - \zeta = \psi - \psi\circ \sigma_*.$$
Therefore, for $y$ such that $\sigma_*^ky = x$ 
$$t_n - S_k \gamma(y) = an + \theta - \psi(y) - S_k\zeta(y).$$
Then, assuming $k$ is large enough such that $|\psi(y) - \psi(y')| < \varepsilon$, we get
$$N_*(t_n - \varepsilon - S_k \gamma(y), y') \leq N_*(t_n - S_k\gamma(y), y) \leq N_*(t_n + \varepsilon - S_k\gamma(y), y').$$
Applying Theorem \ref{3.4.2RenewalLatticeNormal} to $y'$ gives the desired lattice factor, with $\theta $ replaced by $\theta \pm \varepsilon$. By Lemma \ref{3.4.2ContinuityLatticeProfile}, the map
$$\theta\mapsto
\int_\Sigma e^{a\lfloor(\theta-\psi(y))/a\rfloor}\,
d\nu_{-\zeta}(y)$$
is continuous at $\theta$. 
Therefore, letting $\varepsilon\to0$ in the upper and lower bounds gives the stated expression.

We now study the remaining coefficient
$$\sum_{\substack{y: \sigma_*^k y = x\\ \sigma_*^{k-1}y\neq  \emptyset}} e^{-S_k\zeta(y)} h_{*}(y').$$
By continuity of $h_*$, $h_*(y')$ converges uniformly to $h_*(y)$. So when $k\to\infty$ we get that this remaining coefficient has the same limit as 
$$\sum_{\substack{y: \sigma_*^k y = x\\ \sigma_*^{k-1}y\neq  \emptyset}} e^{-S_k\zeta(y)} h_{*}(y) = h_*(x),$$
where this equality follows by iterating the defining relation for $h_*$. 
\end{proof}

In order to study the lattice case, we first characterize the lattice geometric potentials.

\begin{teo}\label{5LatticeCharact}
Let $\mathcal{P}$ be the partition generated by $\{T_i\}_{i \in \mathcal{I}}$ in the sense of Definition \ref{1.1DefPartitionGeneratedbyTi}. Suppose that the geometric potential $\gamma\colon \Sigma \to \R$ is lattice and $\mathcal{I}$ is finite. Then there exists a partition $\mathcal{P}_\ell$ generated by $\{T_i^\ell\}_{i\in \mathcal{I}}$, where each $T_i^\ell$ is an affine linear function and a diffeomorphism $g\in \mathcal{C}^{1 + \alpha}([0,1])$ such that for each $i\in \mathcal{I}$ we have 
$$T_i = g \circ T_i^\ell \circ g^{-1}.$$
\end{teo}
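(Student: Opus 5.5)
The plan is to use the lattice hypothesis to produce a cohomology whose transfer function is, after passing to the interval, the logarithm of the derivative of a conjugating diffeomorphism.

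\textbf{Step 1: a Hölder cohomology.} Write $\gamma-\zeta=\psi-\psi\circ\sigma$ with $\zeta(\Sigma)\subset a\Z$. By Livšic-type regularity for finite-alphabet full shifts, $\psi$ can be taken Hölder. Then $\zeta=\gamma-\psi+\psi\circ\sigma$ is Hölder and takes values in the discrete set $a\Z$. It is therefore locally constant, depending only on the first $N$ coordinates for some $N$.

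\textbf{Step 2: reduce $\zeta$ to a function of $x_1$.} The goal is $\zeta(\underline x)=c_{x_1}$. For this I will use periodic orbits: for a periodic point of period $n$, $S_n\gamma=\log (f^n)'$ at the fixed point lies in $a\Z$. Then:
\begin{itemize}
\item I construct a new transfer function by hand. Since $\zeta$ is locally constant of order $N$, the standard argument recodes it as a function of the first coordinate.
\item Concretely, fix a reference tail $\underline z$ and set $\psi_0(\underline x)=\sum_{n\ge0}\big(\gamma(\sigma^n\underline x)-\gamma(\sigma^n \underline x')\big)$, where $\underline x'=x_1\underline z$-type sequences are used as in Sinai's lemma.
\item This makes $\gamma-\psi_0+\psi_0\circ\sigma=\gamma'$ depend only on $(x_1,\dots)$ through the reference point, i.e.\ $\gamma'(\underline x)=\gamma(x_1\underline z)$ modulo adjustments.
\item Comparing with $\zeta$ via periodic data then shows $\gamma$ is cohomologous to $c_{x_1}:=-\log \lambda_{x_1}$ with a Hölder transfer function $u$.
\end{itemize}
To identify the constants, I will evaluate the relation at the fixed points $i^\infty$. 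This gives $c_i=\log f'(\text{fixed point of }T_i)$, whence $\lambda_i=T_i'(p_i)$. The pressure condition $P(-\gamma)=0$ together with cohomology invariance forces $\sum_i\lambda_i=1$. So affine maps $T_i^\ell$ with slopes $\lambda_i$, placed consecutively in the same order, generate a partition $\mathcal P_\ell$.

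\textbf{Step 3: build $g$.} I define $g=\pi\circ\pi_\ell^{-1}$, where $\pi_\ell$ is the coding map of the affine system. This is well defined off a countable set, and it is monotone and continuous since both codings preserve order and identify the same endpoint pairs. It intertwines the systems: $g\circ T_i^\ell=T_i\circ g$. The claim is that $g$ is $\mathcal C^{1+\alpha}$ with $g'=e^{\tilde u}$ for an appropriate Hölder function $\tilde u$ on $[0,1]$ induced by $u$. This follows from the estimate
$$\frac{\leb(T_w[0,1])}{\leb(T^\ell_w[0,1])}=\exp\Big(\sum(\gamma_\ell-\gamma)\Big)\cdot(\text{bounded distortion})\to e^{u(\cdot)}$$
together with telescoping of the coboundary, $\sum_{j<n}(\psi-\psi\circ\sigma)(\sigma^j\underline x)=\psi(\underline x)-\psi(\sigma^n\underline x)$. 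The limiting ratio is $e^{\psi(\underline x)-\int}$-type, so that $g$ has a derivative given by a continuous function of $\pi_\ell^{-1}$.

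\textbf{Main obstacle.} The hardest step is showing that this derivative is well defined at points with two codings (partition endpoints). This requires $u$ to agree on the two coding sequences $w\,i\,1^\infty$ and $w\,(i{+}1)\,0^\infty$-type pairs. I will attempt this by using that $\gamma$ itself comes from the continuous function $\log f'$ on the interval, so the Livšic transfer function descends to $[0,1]$. Hölder continuity of $g'$ then follows from the Hölder regularity of $\psi$ together with the linear contraction rates of the affine system.
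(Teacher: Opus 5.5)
\textbf{The gap is in Step 2.} None of the devices you list yields a function of $x_1$.
\begin{itemize}
\item Higher-block recoding changes the alphabet. The recoded system is a subshift of finite type on $\mathcal I^N$, not the full shift on $\mathcal I$, so it gives no affine maps $T_i^\ell$ indexed by $\mathcal I$.
\item Sinai's lemma removes dependence on \emph{past} coordinates of a two-sided shift. On a one-sided shift your series $\sum_n(\gamma(\sigma^n\underline x)-\gamma(\sigma^n\underline x'))$ need not converge.
\item ``Comparing via periodic data'' would require $S_n\gamma(w^\infty)=\sum_k\gamma(w_k^\infty)$ for every word $w$, which you never prove.
\end{itemize}
In fact the ingredients you use in Step 2 (Hölder, lattice, $P(-\gamma)=0$, periodic sums in $a\Z$) cannot suffice. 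On the full $2$-shift, take $\zeta=a(1+1_{[12]})$ with $a>0$ solving $e^{a}=1+e^{-a/2}$. It is locally constant, $a\Z$-valued, and has $P(-\zeta)=0$. Yet its Birkhoff sums over the periodic orbits $1^\infty$, $2^\infty$, $(12)^\infty$ are $a$, $a$, $3a$, so it is not cohomologous to any function of $x_1$. An affine conjugate forces $\gamma$ to be cohomologous to a function of $x_1$, so this step is the heart of the theorem, and some interval-specific structure must enter.

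\textbf{The missing idea} is the one the paper's Step 1 is built on. Each partition point $b_j$ has two codings, $x_j=j\,m^\infty$ and $y_j=(j+1)\,1^\infty$, and since each $T_i'$ is continuous, $\gamma(wx_j)=\gamma(wy_j)$ for every \emph{nonempty} word $w$. The argument then runs as follows.
\begin{itemize}
\item Suppose $\zeta$ depends on $M$ coordinates and $\gamma=\zeta+\varphi-\varphi\circ\sigma$. For $|u|\ge M-1$ this gives $\varphi(iux_j)-\varphi(iuy_j)=\varphi(ux_j)-\varphi(uy_j)$.
\item Iterating and letting the prefix grow gives $\varphi(ux_j)=\varphi(uy_j)$.
\item Feeding this back at length $M-2$ shows that $\zeta([iu'j])-\zeta([iu'(j+1)])$ is independent of the first symbol $i$.
\item That independence lets the locally constant $\Phi([u'j])=\zeta([1u'j])-\zeta([1u'1])$ make $\zeta+\Phi-\Phi\circ\sigma$ depend on $M-1$ coordinates. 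Iterate down to one coordinate.
\end{itemize}
You invoke the endpoint identification only in Step 3, for the descent of $u$. There the same telescoping-plus-continuity argument does work, using also that $\gamma_\ell$ depends only on $x_1$; note that $\log f'$ itself is not continuous on $[0,1]$. But the identification must already drive Step 2.

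Once Step 2 is repaired, your Step 3 is a legitimate alternative to the paper's. Taking $g=\pi\circ\pi_\ell^{-1}$ makes the intertwining automatic. The paper instead defines $g$ as the normalized primitive of $e^{-\psi}$, and then implicitly uses $\pi=g\circ\pi_\ell$ when it rewrites the cohomological equation as a formula for $T_i'(g(t))$.

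For the regularity of $g$, ``bounded distortion'' only gives two-sided bounds, not a limit. With $\gamma-\gamma_\ell=u-u\circ\sigma$ and $\pi(\underline z(s))=s$, write $\leb(T_w[0,1])=\alpha^\ell_w\int_0^1 e^{u(\underline z(s))-u(w\underline z(s))}\,ds$. Then the ratio $\leb(T_w[0,1])/\alpha^\ell_w$ tends to $e^{-u(\underline x)}\int_0^1 e^{u\circ\pi^{-1}}\,d\leb$ along the prefixes $w$ of $\underline x$. Equivalently, use $\pi_*\nu_{-\gamma}=\leb=(\pi_\ell)_*\nu_{-\gamma_\ell}$ together with $\nu_{-\gamma}\propto e^{-u}\nu_{-\gamma_\ell}$ to get $g'=C e^{-u\circ\pi_\ell^{-1}}$ directly.
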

\begin{proof}
Assume that $\mathcal{I} = \{1,\ldots, m\}$. The proof is divided into three main steps:

\textbf{Step 1}: $\gamma$ is cohomologous to a function $\gamma_\ell$ which is constant on cylinders of length 1.

Indeed, since $\gamma$ is lattice, it is cohomologous to a function $\zeta\colon \Sigma \to a\Z$ and the image is not contained in a proper subgroup of $a\Z$. Take $\varphi\colon \Sigma \to \R$ a Hölder continuous function such that 
\begin{equation}\label{5CohomologousFunction}
  \gamma = \zeta + \varphi - \varphi \circ \sigma.  
\end{equation}
Since $\zeta$ is continuous, then $\zeta$ is locally constant. Since $\mathcal{I}$ is finite, $\Sigma$ is compact. Thus, $\zeta$ is constant in cylinders of length $M$.

Let $b_j$ be the common endpoint between $I_j$ and $I_{j +1}$ for $j = 1,\ldots, m -1$. Then, this common endpoint has two symbolic codings 
\begin{equation}\label{5EndpointsCoding}
    x_j = jmmm\ldots, \quad y_j = (j + 1) 111\cdots.
\end{equation}
We have 
$$\pi(x_j) = \pi (y_j) = b_j.$$
Because the branches are $\mathcal{C}^1$, for every word $w$ and every symbol $i$,
$$\gamma(iwx_j) = \gamma(iwy_j).$$
Take a word $u$ of length $M - 1$, we have 
$$\zeta(iux_j) = \zeta(iuy_j).$$
Then, using \eqref{5CohomologousFunction}, we have that for all $i\in \mathcal{I}$
$$\varphi(iux_j) - \varphi(iuy_j) = \varphi(u x_j) - \varphi(uy_j).$$
Inductively, for all words $w\in \Sigma_*$
$$\varphi(wux_j ) - \varphi(wuy_j) = \varphi(u x_j) - \varphi(uy_j).$$
Since this is true for every word, making the length arbitrarily long, we get by continuity of $\varphi$ that this implies that for all words $u$ of length $M- 1$:
\begin{equation}\label{5VarphiM-1}
    \varphi(ux_j) = \varphi(uy_j).
\end{equation}

Take a word $u'$ of length $M -2$. The equality above gives, for every symbol $i$,
$$\gamma(iu'x_j) = \gamma(iu'y_j).$$
On the other hand, by \eqref{5CohomologousFunction},
$$\zeta([iu'j]) - \zeta([iu'(j + 1)]) = \varphi(u'x_j) - \varphi(u' y_j),$$
because the term $\varphi(iu' x_j) - \varphi(iu'y_j)$ vanishes by \eqref{5VarphiM-1}.

We just proved that $\zeta([iu'j]) - \zeta([iu'(j + 1)])$ is independent of the initial symbol $i\in \mathcal{I}$.

We now prove that this implies that $\zeta$ is cohomologous to a function $\zeta_{M - 1}$ which is constant in cylinders of length $M - 1$. Define $\Phi\colon \Sigma \to \R$ locally constant on cylinders of length $M- 1$, defined by
$$\Phi([u'j]) = \zeta([1u'j]) - \zeta([1u'1]),$$
for any $u'\in \Sigma_{M - 2}$ and $j\in \mathcal{I}$.
We claim that for every $i \in \mathcal{I}$ the function $\zeta ([iu'j]) - \Phi([u'j])$ is independent of $j$. Indeed, consider $j\neq k\in \mathcal{I}$, then 
$$\zeta([iu'j]) - \Phi([u'j]) - \zeta([iu'k]) + \Phi([u'k])  = \underbrace{\zeta([iu'j]) - \zeta([iu'k])}_{\text{independent of }i} - \underbrace{(\zeta([1u'j]) - \zeta([1u'k]))}_{\text{independent of }1},$$
by the independence of the first term, we conclude that this difference is zero. Thus,
$$\zeta_{M - 1} = \zeta + \Phi - \Phi\circ \sigma,$$
is a function depending only on the first $M - 1$ symbols cohomologous to $\zeta$. Repeating this argument, we get that $\zeta$ is cohomologous to a function $\gamma_{\ell}$ constant in cylinders of length 1.

\textbf{Step 2}: Construct $T_i^\ell$. Since $\gamma_\ell$ is cohomologous to $\gamma$, we get that $P(-\gamma_\ell) = P(-\gamma) = 0$. Thus,
$$\sum_{i \in \mathcal{I}} \exp(- \gamma_\ell([i])) = 1.$$
Let 
$$\alpha_i^\ell = \exp(- \gamma_\ell([i])).$$
Let $\mathcal{P}_\ell = \{I_1^\ell,\ldots, I_m^\ell\}$ be the partition of $[0,1]$ ordered as $\mathcal{P}$ such that $I_i^\ell = \alpha_i^\ell$ for all $i\in \mathcal{I}$. Let $T_i^\ell$ the affine contractions sending $[0,1]$ to $I_i^\ell$. From the construction, we have that 
$$\gamma_{\mathcal{P}_\ell} = \gamma_\ell.$$
Let $f_\ell$ be the map defined by $f_{\mathcal{P}_\ell}$ in  \eqref{1.1DefiSistemaDinamico0,1}.

\textbf{Step 3}: Construct the conjugating map $g\colon [0,1] \to [0,1]$.
So far, we know that for each $\underline{x}\in \Sigma$, we have that there exists $\Psi\colon \Sigma \to \R$ such that 
\begin{equation}\label{5EqCohomologuesLevantada}
\log (f'(\pi(\underline{x}))) - \log (f'_\ell(\pi_\ell(\underline{x}))) = \Psi (\underline{x}) - \Psi(\sigma \underline{x}).    
\end{equation}
We will construct a map $\psi\colon [0,1] \to \R$, such that $\Psi = \psi \circ \pi_\ell$. It will be enough to prove that 
$$\pi_{\ell}(\underline{x}) = \pi_\ell (\underline{y}) \implies \Psi(\underline{x}) = \Psi (\underline{y}).$$
Recall the definition of the endpoints, $x_j$ and $y_j$ from \eqref{5EndpointsCoding} whose preimages will be the ones that give the same coding. We have for any word $w\in \Sigma_*$,
$$\pi_\ell (wx_j) =\pi_\ell (wy_j) \quad \text{ and } \quad \pi(wx_j) = \pi(wy_j).$$
This implies that
$$(\Psi - \Psi \circ \sigma)(wx_j) = (\Psi - \Psi \circ \sigma)(wy_j).$$
Iterating this, we get that for any word 
$$\Psi (x_j) - \Psi (y_j) = \Psi(wx_j) - \Psi(wy_j).$$
By continuity of $\Psi$, this implies that for all $j$, $\Psi(x_j) = \Psi(y_j)$, and thus for any word $w\in \Sigma_*$,
$$\Psi(wx_j) = \Psi(wy_j).$$
Since these are the only cases where $\pi_\ell(\underline{x}) = \pi_\ell(\underline{y})$, we get the equality. Let $\psi \colon[0,1] \to \R$ such that $\Psi = \psi \circ \pi_\ell$. Then define $g\colon [0,1] \to [0,1]$ by 
$$g(t) = \frac{\int_0^t \exp(-\psi(s))\,ds}{\int_0^1 \exp(- \psi(s))\,ds}.$$
We get $g(0) = 0$, $g(1) = 1$ and $g$ is increasing, $\mathcal{C}^{1 + \alpha}$ (since $\psi$ is Hölder) and a diffeomorphism.

Rewriting \eqref{5EqCohomologuesLevantada}, we get 
$$T_i'(g(t)) = (T_i^{\ell})'(t) \exp(-\psi(T_i^\ell(t))  + \psi(t)).$$
Since 
$$g'(t) = \frac{\exp(-\psi(t))}{\int_0^1 \exp(-\psi(s))\,ds}.$$
We get 
\begin{equation}\label{5EquationDerivatives}
    (T_i \circ g)' (t) = (g\circ T_i^{\ell})'(t).
\end{equation}
Since $T_1(g(0)) = g(T_1^\ell(0)) = 0$, we get 
$$T_1\circ g=g\circ T_1^\ell.$$
In particular,
$$g(T_1^\ell(1)) = T_1(1) = T_1(g(1)).$$
Since the intervals are ordered, $T_1^\ell(1) = T_2^\ell(0)$ and $T_1(1) = T_2(0)$. Repeating this argument inductively, we obtain $T_i(g(0)) = g(T_i^\ell(0))$ for every $i \in \mathcal{I}$. By \eqref{5EquationDerivatives} holds for every $i\in \mathcal{I}$, it follows that 
$$T_i \circ g = g\circ T_i^{\ell},$$
for every $i\in \mathcal{I}$.
\end{proof}

The preceding theorem shows that, in the finite alphabet case, every lattice
geometric potential arises from a smooth conjugacy of an affine model. Therefore, to understand the obstruction in the lattice case, it is enough to
study systems of the following form.

Let $\mathcal{P}$ be the partition generated by $\{T_i\}_{i \in \mathcal{I}}$, where $\gamma := \gamma_{\mathcal{P}}$ is a lattice function. Let $\{T_i^\ell\}_{i \in \mathcal{I}}$ be the transformations constructed in Theorem \ref{5LatticeCharact} and $g\colon [0,1] \to [0,1]$ the $C^{1 + \alpha}$ diffeomorphism such that $T_i = g\circ T_i^\ell \circ g^{-1}$. 

As before in Section \ref{SectionPrelim}, construct $f_{\ell}, \gamma_{\ell}, \beta_{\ell}, \pi_{\ell}$ and $N^{\ell}$ associated to the partition generated by $\{T_i^\ell\}_{i \in \mathcal{I}}$.

We have 
$$\{\pi(\underline{x})\} = \bigcap_{n = 1}^\infty g\circ T_{x_1\ldots x_n}^\ell \circ g^{-1} [0,1] = g\left(\bigcap_{n = 1} T^\ell_{x_1\ldots x_n}[0,1]\right) = \{g(\pi_{\ell}(\underline{x}))\}.$$
With this construction, by the chain rule 
\begin{align*}
f' (\pi (\underline{x})) &= g'(f_{\ell}(\pi_{\ell}(\underline{x}))) \cdot f'_{\ell}(\pi_{\ell}(\underline{x})) \cdot \frac{1}{g'(\pi_{\ell}(\underline{x}))} \\&= g'(\pi_{\ell}(\sigma\underline{x})) \cdot f'_{\ell}(\pi_{\ell}(\underline{x})) \cdot \frac{1}{g'(\pi_{\ell}(\underline{x}))}.    
\end{align*}

Define $\psi \colon \Sigma \to \R$ by 
$$\psi(\underline{x}) = g'(\pi_\ell(\underline{x})).$$
Then
$$\gamma = \gamma_\ell + \log\psi \circ \sigma - \log\psi.$$

Note that in this case $\psi \colon \Sigma \cup \Sigma_* \to \R$ is defined by 
$$\psi(x_1\ldots x_n) = \frac{g(T_{x_1\ldots x_n}^\ell (1)) - g(T^\ell_{x_1\ldots x_n }(0))}{T^\ell_{x_1\ldots x_n} (1) - T^\ell_{x_1\ldots x_n}(0)} \quad \text{ and } \quad \psi(\emptyset) = 1.$$
This definition is chosen so that the finite-word multiplicative identities for $\beta$ remain valid.

Thus, for $\underline{x}\in \Sigma \cup \Sigma_*$
$$\beta (\underline{x}) = \beta_{\ell}(\underline{x}) \cdot \frac{\psi(\underline{x})}{\psi(\sigma \underline{x})} \implies \gamma(\underline{x}) = \gamma_{\ell}(\underline{x}) - \log \psi(\underline{x}) + \log \psi(\sigma\underline{x}).$$

Let $\Psi\colon \Sigma \cup \Sigma_* \to \R$, defined by
$$\Psi = -\log \psi.$$
Note that 
$$\Psi(\emptyset) = 0 \quad \text{ and }\quad \Psi(1) = -\log g(\alpha_1^\ell) + \log\alpha_1^\ell $$

Let $\gamma_v\colon \Sigma \cup \Sigma_* \to \R$ as \eqref{3.3.2Gammav}. Let 
$$\Psi_v:=\Psi-\log\varphi_v.$$
Then
$$\gamma_v - \gamma_{\ell} = \Psi - \log \varphi_v  - (\Psi - \log \varphi_v) \circ \sigma = \Psi_v - \Psi_v\circ \sigma.$$

We have 
$$\Psi_v(\emptyset) = -\log \alpha_v \quad \text{ and } \quad \Psi_{v}(1) = -\log g(\alpha_1^\ell) + \log \alpha_1^\ell - \log \alpha_{v1} + \log \alpha_{1}.$$

In order to use Theorem \ref{3.4.2LatticeForFiniteWords} as it was used in the proof of Lemma \ref{2MainLem}, we want to compute the limit as $n\to \infty$ of the expression$$\frac{N_v ( an + \log \alpha_v, \emptyset ) - N_v (an + \log \alpha_{v1}, 1)}{N(an, \emptyset) - N(an + \log \alpha_1, 1)}.$$
Let 
$$\theta_1 = -\log\left(\frac{g(\alpha_1^\ell)}{\alpha_1^\ell \cdot \alpha_1}\right) = \log \alpha_1^\ell.$$

Considering each term separately, we get 
\begin{align*}
N_v (an + \log \alpha_v, \emptyset) &= N_v(an - \Psi_v(\emptyset), \emptyset) \\
N_v(an + \log \alpha_{v1}, 1) &= N_v( an - \Psi_v(1) + \theta_1, 1) \\
N(an, \emptyset) &= N(an- \Psi(\emptyset), \emptyset)\\
N(an + \log \alpha_1, 1) &= N(an - \Psi(1) + \theta_1, 1).
\end{align*}

In order to apply Theorem \ref{3.4.2LatticeForFiniteWords}, we assume that
$$\nu_{-\gamma_\ell}
\bigl(\{\underline{y} \in\Sigma:\Psi(\underline{y})\in a\mathbb Z\}\bigr)=0 \quad\text{ and } \quad \nu_{-\gamma_\ell}
\bigl(\{\underline{y}\in\Sigma:\Psi_v(\underline{y})\in a\mathbb Z\}\bigr)=0.$$
Since $\theta_1\in a\mathbb Z$, these are the only residue classes that occur in the computation below.

Under this assumption, the same computation as in the nonlattice case and the fact that $h_{-\gamma_\ell}\equiv 1$ give
$$\lim_{n\to\infty}\frac{N_v ( an + \log \alpha_v, \emptyset ) - N_v (an + \log \alpha_{v1}, 1)}{N(an, \emptyset) - N(an + \log \alpha_1, 1)} = \frac{\int e^{a\lfloor -\Psi_v(y)/a \rfloor} - e^{a\lfloor (\theta_1 - \Psi_v(y)) / a\rfloor }\,d\nu_{-\gamma_\ell}(y)}{\int e^{a\lfloor -\Psi(y)/a \rfloor} - e^{a\lfloor (\theta_1 - \Psi(y)) / a\rfloor }\,d\nu_{-\gamma_\ell}(y)}.$$

Since for all $\theta$ 
$$\lfloor \theta \rfloor  = \theta - \{\theta\},$$
we can replace this and get that 
\[
\begin{aligned}
&e^{a\lfloor -\Psi_v(y)/a \rfloor}
-
e^{a\lfloor (\theta_1-\Psi_v(y))/a \rfloor} \\
&=
e^{-\Psi_v(y)-a\{-\Psi_v(y)/a\}}
\left(
1-
e^{\theta_1
-a\{(\theta_1-\Psi_v(y))/a\}
+a\{-\Psi_v(y)/a\}}
\right).
\end{aligned}
\]
Since $\theta_1\in a\Z$, we get
$$e^{a\lfloor -\Psi_v(y)/a \rfloor} - e^{a\lfloor (\theta_1 - \Psi_v(y)) / a\rfloor } = e^{-\Psi_v(y) - a \{-\Psi_v(y)/ a\}} \left(1 - e^{\theta_1  - a \{(-\Psi_v(y))/a\} + a\{-\Psi_v(y)/a\}}\right).$$

Note that 
\begin{align*}
\Psi_v(y) &= \Psi(y) + \sum_{i = 0}^{|v| - 1} \gamma(\sigma^iv y)\\
&= \Psi(y) + \sum_{i = 0}^{|v| - 1} \gamma_\ell(\sigma^i vy) + \Psi(vy) - \Psi(y)\\
&= \sum_{i = 0}^{|v| - 1} \gamma_\ell(\sigma^i vy) + \Psi(vy).
\end{align*}

Since $\gamma_\ell(y)\in a\Z$ for all $y\in \Sigma$, we have 
$$e^{a\lfloor -\Psi_v(y)/a \rfloor} - e^{a\lfloor (\theta_1 - \Psi_v(y)) / a\rfloor }  = \alpha_v^\ell \cdot e^{-\Psi(vy) - a \{-\Psi(vy)/ a\}} \left(1 - e^{\theta_1}\right).$$

Let 
$$I_v := \int e^{-\Psi(vy) - a \{-\Psi(vy)/ a\}} \left(1 - e^{\theta_1}\right)\,d\nu_{-\gamma_\ell}(y),$$
and 
$$I := \int e^{-\Psi(y) - a \{-\Psi(y)/ a\}} \left(1 - e^{\theta_1}\right)\,d\nu_{-\gamma_\ell} (y).$$

Recall that for $\underline{x}\in \Sigma$
$$\Psi(\underline{x}) = -\log \psi(\underline{x}) = -\log g' (\pi_\ell(\underline{x})).$$
Let 
$$F(t) = g'(t) e^{-a\{\log g'(t) /a\}}\left(1 - e^{\theta_1 }\right).$$

Then 
$$I = \int F\circ \pi_\ell\,d\nu_{-\gamma_\ell} = \int F\,d\leb,$$
and 
$$I_v = \int F\circ T_v^\ell\circ \pi_\ell\, d\nu_{-\gamma_\ell} = \int F\circ T_v^\ell \,d\leb.$$
We would like to have that 
$$\frac{I_v}{I} = \frac{\alpha_v}{\alpha_v^\ell}.$$

However, the fractional-part terms are not invariant under the smooth change of variables $g$. Consequently, the ratio $I_v/I$ need not coincide with the geometric scaling factor $\alpha_v/\alpha_{v}^\ell$.

In the explicit example below, the no-atom condition holds because the relevant level sets are finite and $\nu_{-\gamma_\ell}$ is pushed forward to Lebesgue measure by $\pi_\ell$.

\begin{exam}\footnote{This example was found with the assistance of ChatGPT (GPT - 5.5 Thinking) and subsequentually verified by the author.}
Consider the doubling map:
$$T_1^\ell (x) = \frac{x}{2}, \quad T_2^\ell(x) = \frac{x + 1}{2}.$$
Then $\gamma_\ell$ is the constant function $\log 2$, which is clearly lattice.

Let $\varepsilon \in (0,1)$ and set 
$$g_\varepsilon(x) = x +\frac{\varepsilon}{2\pi} (1 - \cos (2\pi x)).$$
Then 
$$g'_\varepsilon(x) = 1 + \varepsilon \,\sin (2\pi x) > 0.$$
Define for $i =1,2$
$$T_i = g_\varepsilon \circ T_i^\ell \circ g_\varepsilon^{-1}.$$
Take $v = 1$. Then 
$$\alpha_{1}^\ell = \frac{1}{2} \quad \text{ and } \quad \alpha_1 = \frac{1}{2} + \frac{\varepsilon}{\pi}.$$
We have in this case $\theta_1 = - a = - \log 2$, thus
$$e^{a \lfloor -\Psi (x)/a\rfloor } - e^{a\lfloor (\theta_1 - \Psi(x))/a \rfloor} = (1 - e^{-a}) 2^{\lfloor \log_2 g_\varepsilon'(x) \rfloor}.$$
A direct computations and assuming $\varepsilon < 1/4$, we get 
$$2^{\lfloor \log_2 g_\varepsilon'(x) \rfloor} = \begin{cases}
    1, & x\in (0,1/2)\\
    1/2, &x\in (1/2,1).
\end{cases} $$
Therefore 
$$I = \frac{3}{4}(1 - e^{-a}),$$
and 
$$I_1 = 1  - e^{-a}.$$
Thus,
$$\frac{I_1}{I} \neq \frac{\alpha_1}{\alpha_1^\ell} = 1 + \frac{2\varepsilon}{\pi}.$$
\end{exam}

This example illustrates that the lattice case does not behave like the nonlattice case. Since the computation is made along a fixed sequence of scales, it suggests that in the lattice case one should not expect convergence of the sequence $\mu_n$. Instead, one should expect a family of weak-$*$ accumulation points depending on the residue class of the logarithmic scale parameter modulo $a$.

\bibliographystyle{alpha}
\bibliography{BibFile}

\end{document}